\def\rr{{\mathbb R}}
\def\zz{{\mathbb Z}}
\def\cc{{\mathbb C}}
\def\nn{{\mathbb N}}
\def\pp{{\mathbb P}}
\def\ca{{\mathcal A}}
\def\cb{{\mathcal B}}
\def\ccc{{\mathcal C}}
\def\cd{{\mathcal D}}
\def\ce{{\mathcal E}}
\def\cf{{\mathcal F}}
\def\cl{{\mathcal L}}
\def\cs{{\mathcal S}}
\def\cx{{\mathcal X}}
\def\sca{{\mathscr A}}
\def\scb{{\mathscr B}}
\def\scg{{\mathscr G}}
\def\sci{{\mathscr I}}
\def\sct{{\mathscr T}}
\def\scx{{\mathscr X}}
\def\scy{{\mathscr Y}}
\def\fz{\infty}
\def\az{\alpha}
\def\bz{\beta}
\def\dz{\delta}
\def\ez{\epsilon}
\def\gz{{\gamma}}
\def\lz{\lambda}
\def\oz{\omega}
\def\boz{\Omega}
\def\tz{\theta}
\def\sz{\sigma}
\def\bsz{\Sigma}
\def\vz{\varphi}
\def\lf{\left}
\def\r{\right}
\def\hs{\hspace{0.25cm}}
\def\ls{\lesssim}
\def\gls{\gtrsim}
\def\gs{\gtrsim}
\def\ov{\overline}
\def\noz{\nonumber}
\def\wz{\widetilde}
\def\st{\subset}
\def\bh{\backslash}
\def\supp{\mathop\mathrm{\,supp\,}}
\def\loc{{\mathop\mathrm{loc\,}}}
\def\diam{\mathop\mathrm{\,diam\,}}
\def\at{{\mathop\mathrm{at}}}
\def\lon{L^1(\mathcal{X})}
\def\ltw{L^2(\mathcal{X})}
\def\lp{L^p(\mathcal{X})}
\def\lq{L^q(\mathcal{X})}
\def\li{L^{\infty}(\mathcal{X})}
\def\bmo{\mathop\mathrm{\,{\rm BMO}(\mathcal{X})}}
\def\hona{H^1_\at (\mathcal{X})}
\def\lip{{\mathop\mathrm{\,Lip}}}
\newtheorem{theorem}{Theorem}[section]
\newtheorem{lemma}[theorem]{Lemma}
\newtheorem{corollary}[theorem]{Corollary}
\theoremstyle{definition}
\newtheorem{remark}[theorem]{Remark}
\newtheorem{definition}[theorem]{Definition}
\numberwithin{equation}{section}
\begin{document}

\arraycolsep=1pt

\title{\Large\bf Wavelet Characterizations of the Atomic Hardy Space
$H^1$ on Spaces of Homogeneous Type \footnotetext {\hspace{-0.35cm}
2010 {\it Mathematics Subject Classification}. Primary 42B30;
Secondary 42C40, 30L99.
\endgraf {\it Key words and phrases}.
metric measure space of homogeneous type,
Hardy space, regular wavelet, spline function.
\endgraf
Dachun Yang is supported by the National
Natural Science Foundation of China
(Grant Nos.~11571039 and 11361020),
the Specialized Research Fund for the Doctoral Program of Higher Education
of China (Grant No. 20120003110003) and the Fundamental Research
Funds for Central Universities of China
(Grant Nos.~2013YB60 and 2014KJJCA10).}}
\author{Xing Fu and Dachun Yang\,\footnote{Corresponding author}}
\date{ }
\maketitle

\vspace{-0.8cm}

\begin{center}
\begin{minipage}{13cm}
{\small {\bf Abstract}\quad Let $({\mathcal X},d,\mu)$
be a metric measure space of homogeneous type in the sense of
R. R. Coifman and G. Weiss and $H^1_{\rm at}({\mathcal X})$ be the atomic
Hardy space. Via orthonormal bases of regular wavelets
and spline functions recently constructed by P. Auscher and
T. Hyt\"onen, together with obtaining some crucial lower
bounds for regular wavelets,
the authors give an unconditional basis of
$H^1_{\rm at}({\mathcal X})$ and several
equivalent characterizations of $H^1_{\rm at}({\mathcal X})$
in terms of wavelets, which are proved useful.}
\end{minipage}
\end{center}

\section{Introduction}\label{s1}

\hskip\parindent The real variable theory of Hardy spaces
$H^p(\rr^D)$ on the $D$-dimensional Euclidean space
$\rr^D$ plays essential roles
in various fields of analysis such as harmonic analysis and
partial differential equations; see, for example, \cite{sw,s70,fs,s93}.
Meyer \cite{m92} established the equivalent characterizations of $H^1(\rr^D)$
via wavelets. Liu \cite{l92} obtained several equivalent characterizations of
the weak Hardy space $H^{1,\,\fz}(\rr^D)$ via wavelets.
Wu \cite{w92} further gave a wavelet area integral characterization of
the  weighted Hardy space $H^p_{\oz}(\rr^D)$ for $p\in(0,1]$.
Later, via the vector-valued Calder\'on-Zygmund theory,
Garc\'ia-Cuerva and Martell \cite{gm} obtained a characterization
of $H^p_{\oz}(\rr^D)$ for $p\in(0,1]$ in terms of
wavelets without compact supports.

It is well known that many classical results
of harmonic analysis over Euclidean
spaces can be extended to spaces of homogeneous type
in the sense of Coifman and Weiss \cite{cw71,cw77}, or to
RD-spaces introduced by Han, M\"uller and Yang \cite{hmy08}
(see also \cite{hmy06,yz11}).

Recall that a quasi-metric space $(\cx, d)$ equipped
with a nonnegative measure
$\mu$ is called a {\it space of homogeneous type}
in the sense of Coifman and Weiss \cite{cw71,cw77}
if $(\cx, d,\mu)$ satisfies the following {\it measure doubling condition}:
there exists a positive constant $C_{(\cx)}\in[1,\fz)$ such that,
for all balls
$B(x,r):= \{y\in\cx:\,\, d(x, y)< r\}$
with $x\in\cx$ and $r\in(0, \fz)$,
\begin{equation*}
\mu(B(x, 2r))\le C_{(\cx)} \mu(B(x,r)),
\end{equation*}
which further implies that there exists a
positive constant $\wz C_{(\cx)}$ such that,
for all $\lz\in[1,\fz)$,
\begin{equation}\label{a.b}
\mu(B(x, \lz r))\le \wz C_{(\cx)}\lz^{n} \mu(B(x,r)),
\end{equation}
where $n:=\log_2 C_{(\cx)}$. Let
\begin{equation}\label{n0}
n_0:=\inf\{n\in(0,\fz):\ n\ {\rm satisfies}\ (\ref{a.b})\}.
\end{equation}
It is obvious that $n_0$ measures the
dimension of $\cx$ in some sense and
$ n_0\le n$. Observe that \eqref{a.b} with $n$ replaced by $n_0$ may
not hold true.

A space of homogeneous type, $(\cx, d,\mu)$,
is called a \emph{metric measure space of homogeneous type}
in the sense of Coifman and Weiss
if $d$ is a metric.

Recall that an RD-\emph{space} $(\cx,d,\mu)$ is
defined to be a space of homogeneous type
satisfying the following additional \emph{reverse
doubling condition} (see \cite{hmy08}): there exist positive constants
$a_0,\ {\widehat C}_{(\cx)}\in(1,\fz)$ such that, for all balls $B(x,r)$
with $x\in\cx$ and $r\in(0, \diam(\cx)/a_0)$,
$$\mu(B(x, a_0r))\ge {\widehat C}_{(\cx)} \mu(B(x,r))$$
(see \cite{yz11} for more equivalent characterizations of RD-spaces).
Here and hereafter,
$$\diam (\cx):=\sup\{d(x,y):\ x,\,y\in\cx\}.$$

Let $(\cx,d,\mu)$ be a space of homogeneous type.
In \cite{cw77}, Coifman and Weiss introduced the
atomic Hardy space $H^{p,\,q}_\at (\cx,d,\mu)$ for all
$p\in(0,1]$ and $q\in[1,\fz]\cap(p,\fz]$ and showed
that $H^{p,\,q}_\at (\cx,d,\mu)$ is independent
of the choice of $q$, which is hereafter simply denoted by
$H^p_\at(\cx,d,\mu)$, and that its dual space is the
Lipschitz space $\lip_{1/p-1}(\cx,d,\mu)$ when $p\in(0,1)$,
or the space $\mathop\mathrm{BMO}(\cx,d,\mu)$
of functions with bounded mean oscillations when $p=1$.

Recall that Coifman and Weiss \cite{cw77}
introduced the following \emph{measure distance $\rho$}
which is defined by setting, for all $x,\,y\in\cx$,
\begin{equation}\label{a.x}
\rho(x,y):=\inf\lf\{\mu\lf(B_d\r):\
B_d\ \mathrm{is\ a\ ball\ containing}\ x\ {\rm and}\ y\r\},
\end{equation}
where the infimum is taken over all balls in $(\cx,d,\mu)$ containing
$x$ and $y$; see also \cite{ms1}.
It is well known that, although all balls defined by $d$ satisfy the axioms
of the complete system of neighborhoods in $\cx$ [and hence induce a (separated)
topology in $\cx$], the balls $B_d$ are not necessarily
open with respect to the topology
induced by the quasi-metric $d$. However, by \cite[Theorem 2]{ms1},
we see that there exists a quasi-metric $\wz{d}$ such that $\wz{d}$
is \emph{equivalent} to $d$, namely, there exists a positive
constant $C$ such that, for all $x,\,y\in\cx$,
$$
C^{-1}d(x,y)\le\wz{d}(x,y)\le Cd(x,y),
$$
and the balls in $(\cx,\wz{d},\mu)$ are open.

Recall also that a quasi-metric measure space $(\cx,\rho,\mu)$
is said to be \emph{normal} in \cite{ms1} if
there exists a fixed positive constant $C_{(\rho)}$
such that, for all $x\in\cx$ and $r\in(0,\fz)$,
$$
C_{(\rho)}^{-1}r\le\mu\lf(B_{\rho}(x,r)\r)\le C_{(\rho)}r.
$$

Assuming that all balls in $(\cx,d,\mu)$ are open,
Coifman and Weiss \cite[p.\,594]{cw77} claimed
that the topologies of $\cx$ induced, respectively, by $d$ and $\rho$ coincide and
$(\cx,\rho,\mu)$ is a normal space,
which were rigorously proved by
Mac\'ias and Segovia in \cite[Theorem 3]{ms1},
and also that the atomic Hardy space $H^p_\at(\cx,d,\mu)$
associated with $d$ and
the atomic Hardy space $H^p_\at(\cx,\rho,\mu)$
associated with $\rho$ coincide with equivalent quasi-norms
for all $p\in(0,1]$.
Mac\'ias and Segovia \cite[Theorem 2]{ms1} further showed that
there exists a normal quasi-metric
$\wz{\rho}$ such that $\wz{\rho}$ is
equivalent to $\rho$
and $\wz{\rho}$ is \emph{$\tz$-H\"older continuous}
with $\tz\in(0,1)$, namely,
there exists a positive constant $C$
such that, for all $x,\,\wz{x},\,y\in\cx$,
$$
\lf|\wz{\rho}(x,y)-\wz{\rho}(\wz{x},y)\r|\le C
\lf[\wz{\rho}(x,\wz{x})\r]^{\tz}
\lf[\wz{\rho}(x,y)+\wz{\rho}(\wz{x},y)\r]^{1-\tz}.
$$
Via establishing certain geometric measure relations
between $(\cx,d,\mu)$ and $(\cx,\rho,\mu)$,
Hu, Yang and Zhou \cite[Theorem 2.1]{hyz} rigorously verified
the claim of Coifman and Weiss \cite[p.\,594]{cw77}
on the coincidence of both atomic Hardy spaces $H^p_\at(\cx,d,\mu)$
and $H^p_\at(\cx,\rho,\mu)$, which was also used by Mac\'ias and
Segovia \cite[pp.\,271-272]{ms2}.

When $(\cx,\rho,\mu)$ is a normal quasi-metric measure space,
Coifman and Weiss \cite{cw77} further
established the molecular characterization
for $H^1_\at(\cx,\rho,\mu)$.
When $(\cx,\wz\rho,\mu)$ is a normal quasi-metric
measure space and $\wz{\rho}$ is $\tz$-H\"older
continuous, Mac\'ias and Segovia \cite{ms2}
obtained the grand maximal function characterization
for $H^p_\at(\cx,\wz\rho,\mu)$ with $p\in (\frac 1{1+\tz}, 1]$
via distributions acting on certain spaces of Lipschitz
functions; Han \cite{h94} obtained their
Lusin-area function characterization;
Duong and Yan \cite{dy03} then characterized these
atomic Hardy spaces via Lusin-area functions
associated with some Poisson semigroups;
Li \cite{l98} also obtained a characterization of
$H^p_\at(\cx,\wz\rho,\mu)$ in terms of the grand maximal function
defined via test functions introduced in \cite{hs94}.

Over RD-spaces $(\cx,d,\mu)$ with $d$ being a metric,
for $p\in (\frac {n_0}{n_0+1},1]$
with $n_0$ as in \eqref{n0}, Han, M\"uller and Yang \cite{hmy06}
developed a Littlewood-Paley theory
for atomic Hardy spaces $H^p_\at(\cx,d,\mu)$;
Grafakos, Liu and Yang \cite{gly1} established their characterizations
via various maximal functions.
Moreover, it was shown in \cite{hmy08} that these Hardy spaces
coincide with Triebel-Lizorkin spaces on $(\cx,d,\mu)$.
Some basic tools, including spaces of test functions,
approximations
of the identity and various Calder\'on reproducing formulas on RD-spaces,
were well developed in \cite{hmy06,hmy08},
in order to develop a real-variable theory of Hardy spaces or,
more generally, Besov spaces and Triebel-Lizorkin
spaces on RD-spaces.
From then on, these basic tools play important roles in
harmonic analysis on RD-spaces (see, for example,
\cite{glmy, gly, hmy06, hmy08, kyz10, kyz11, yz08, yz11}).

Recently, Auscher and Hyt\"onen \cite{ah13}
built an orthonormal basis
of H\"older continuous wavelets with exponential decay
via developing randomized dyadic structures and properties of
spline functions over general spaces of homogeneous type.
Motivated by \cite{ah13}, in this article,
we obtain an unconditional basis of
$H^1_\at ({\mathcal X},d,\mu)$ and several
equivalent characterizations of
$H^1_\at ({\mathcal X},d,\mu)$ in terms of wavelets.

We point out that the main result (Theorem \ref{tc.d} below) of this article 
was applied in \cite{fyl} to confirm the conjecture suggested by
A. Bonami and F. Bernicot affirmatively 
(This conjecture was presented by L. D. Ky in \cite{ky}).
More applications are also expectable. 

Throughout this article, for the presentation simplicity,
we \emph{always assume} that $(\cx,d,\mu)$
is a metric measure space of homogeneous type, $\diam (\cx)=\fz$
and $(\cx,d,\mu)$ is non-atomic, namely, $\mu(\{x\})=0$ for any
$x\in\cx$. It is known that, if $\diam (\cx)=\fz,$
then $\mu(\cx)=\fz$ (see, for example, \cite[Lemma 8.1]{ah13}).
Also, from now on, for the notational simplicity,
on function spaces over $(\cx,d,\mu)$ such as
$H^1_\at ({\mathcal X},d,\mu)$, we will simply
write it as $H^1_\at (\cx)$ by omitting $d$ and $\mu$.

The organization of this paper is as follows.

In Section \ref{s2}, we first recall some preliminary notions
on wavelets and discover some crucial lower bounds for regular wavelets
via the continuous functional calculus
(see Theorem \ref{tb.m} below).

In Section \ref{s3}, we give an unconditional basis of $\hona$.
To this end, we first establish two useful lemmas which are generalizations
of \cite[Proposition 8.8 and Corollary 7.10]{w97}, respectively.
Via these, we show that the orthonormal basis of regular wavelets is just
an unconditional basis of $\hona$, where the molecular characterization
of $\hona$ from \cite{hyz}
and the boundedness of Calder\'on-Zygmund operators from \cite{yz08}
play important roles.

Section \ref{s4} is devoted to the equivalent wavelet
characterizations of $\hona$.
Via the unconditional basis of $\hona$ in Section \ref{s3},
combined with the aforementioned obtained lower bounds for regular wavelets,
the Lebesgue differential theorem associated to
the dyadic cubes (see Lemma \ref{lc.h} below),
and the technical Lemma \ref{lc.i},
we then finish the proof of Theorem \ref{tc.d},
the equivalent characterizations of
$\hona$ via wavelets.

Finally, we make some conventions on notation.
Throughout the whole paper, $C$ stands for a {\it positive constant} which
is independent of the main parameters, but it may vary from line to
line. Moreover, we use $C_{(\rho,\,\az,\,\ldots)}$
to denote a positive constant depending
on the parameters $\rho,\,\az,\,\ldots$.
Usually, for a ball $B$, we use $c_{B}$ and $r_{B}$, respectively, to denote
its center and radius. Moreover, for any
$x,\,y\in\cx$, $r,\,\rho\in(0,\fz)$ and ball $B:=B(x,r)$,
$$
\rho B:=B(x,\rho r), \quad V(x,r):=\mu(B(x,r))=:V_r(x),
\quad V(x,y):=\mu(B(x,d(x,y))).
$$
If, for two real functions $f$ and $g$, $f\le Cg$, we then write $f\ls g$;
if $f\ls g\ls f$, we then write $f\sim g$.
For any subset $E$ of $\cx$, we use
$\chi_E$ to denote its {\it characteristic function}.
Furthermore, $\langle\cdot,\cdot\rangle$ and $(\cdot,\cdot)$
represent the duality relation and the $\ltw$ inner product,
respectively.

\section{Preliminaries on Wavelets over $(\cx,d,\mu)$}\label{s2}

\hskip\parindent In this section, we first recall
some preliminary notions and then obtain some crucial
lower bounds for regular wavelets from \cite{ah13}.

The following notion of the geometrically doubling
is well known in analysis on metric spaces, for example,
it can be found in Coifman and Weiss \cite[pp.\,66-67]{cw71}.

\begin{definition}\label{db.b}
A metric space $(\cx,d)$ is said to be \emph{geometrically doubling} if there
exists some $N_0\in \nn$ such that, for any ball
$B(x,r)\st \cx$ with $x\in\cx$ and $r\in(0,\fz)$,
there exists a finite ball covering $\{B(x_i,r/2)\}_i$ of
$B(x,r)$ such that the cardinality of this covering is at most $N_0$,
where, for all $i$, $x_i\in\cx$.
\end{definition}

\begin{remark}\label{rb.l}
Let $(\cx,d)$ be a geometrically doubling metric space.
In \cite{h10}, Hyt\"onen showed that
the following statements are mutually equivalent:
\vspace{-0.25cm}
\begin{itemize}
  \item[\rm(i)] $(\cx,d)$ is geometrically doubling.
\vspace{-0.25cm}
  \item[\rm(ii)] For any $\ez\in (0,1)$ and any ball $B(x,r)\st \cx$
with $x\in\cx$ and $r\in(0,\fz)$,
there exists a finite ball covering $\{B(x_i,\ez r)\}_i$,
with $x_i\in\cx$ for all $i$, of
$B(x,r)$ such that the cardinality of this covering
is at most $N_0\ez^{-G_0}$,
here and hereafter, $N_0$ is as in Definition \ref{db.b} and
$G_0:=\log_2N_0$.
\vspace{-0.25cm}
  \item[\rm(iii)] For every $\ez\in (0,1)$, any ball $B(x,r)\st \cx$
with $x\in\cx$ and $r\in(0,\fz)$ contains
at most $N_0\ez^{-G_0}$ centers of disjoint balls $\{B(x_i,\ez r)\}_i$
with $x_i\in\cx$ for all $i$.
\vspace{-0.25cm}
  \item[\rm(iv)] There exists $M\in \nn$ such that any ball $B(x,r)\st \cx$
with $x\in\cx$ and $r\in(0,\fz)$ contains at most $M$ centers
$\{x_i\}_i\st\cx$ of
  disjoint balls $\{B(x_i, r/4)\}_{i=1}^M$.
  \end{itemize}
\end{remark}

Recall that metric measure spaces of
homogeneous type are geometrically doubling,
which was proved
by Coifman and Weiss in \cite[pp.\,66-68]{cw71}.

Before we introduce the orthonormal basis of regular wavelets from \cite{ah13},
we first recall some notions and notation from \cite{ah13}.
For every $k\in\zz$, a set of \emph{reference dyadic points},
$\{x^k_\az\}_{\az\in\sca_k}$, here and hereafter,
\begin{equation}\label{b.v}
\sca_k\ {\rm denotes\ some\ countable\ index\ set\ for\ each}\ k\in\zz,
\end{equation}
is chosen as follows [the Zorn lemma
(see \cite[Theroem I.2]{rs80})
is needed since we consider the maximality]. For
$k=0$, let
$\scx^0:=\{x^0_{\az}\}_{\az\in\sca_0}$ be
a maximal collection of $1$-separated points. Inductively,
for any $k\in\nn$, let
\begin{equation}\label{2.1x}
\scx^k:=\{x^k_\az\}_{\az\in\sca_k}\supset\scx^{k-1}\quad
{\rm and}\quad \scx^{-k}:=\{x^{-k}_\az\}_{\az\in\sca_k}\st\scx^{-(k-1)}
\end{equation}
be maximal $\dz^k$-separated and $\dz^{-k}$-separated collections
in $\cx$ and in $\scx^{-(k-1)}$, respectively.
Fix $\dz$ a small positive parameter, for example, it
suffices to take $\dz\le\frac{1}{1000}$.
From \cite[Lemma 2.1]{ah13}, it follows that
$$
d\lf(x^k_\az,x^k_\bz\r)\ge\dz^k\ {\rm for\ all\ }\az,\,\bz\in\sca_k\
{\rm and}\ \az\neq\bz, \quad
d\lf(x,\scx\r):=\inf_{\az\in\sca_k}d\lf(x,x^k_{\az}\r)<2\dz^k.
$$

It is obvious that the dyadic reference points
$\{x^k_\az\}_{k\in\zz,\,\az\in\sca_k}$ satisfy \cite[(2.3) and (2.4)]{hk12}
with $A_0=1$, $c_0=1$ and $C_0=2$, which further induces a
dyadic system of dyadic cubes over geometrically doubling
metric spaces as in \cite[Theorem 2.2]{hk12}. We re-state it
in the following theorem, which is applied to the construction
of the orthonormal basis of regular wavelets
as in \cite{ah13}.

\begin{theorem}\label{tb.c}
Let $(\cx,d)$ be a geometrically doubling metric space.
Then there exist families of sets,
$\mathring{Q}^k_{\az}\st Q^k_{\az}\st\ov{Q}^k_{\az}$
(called, respectively, \emph{open, half-open and
closed dyadic cubes}) such that:

\begin{itemize}
\item[\rm(i)] $\mathring{Q}^k_{\az}$ and $\ov{Q}^k_{\az}$
denote, respectively, the interior and the closure of $Q^k_{\az}$;

\item[\rm(ii)] if $\ell\in\zz\cap[k,\fz)$ and $\az,\,\bz\in\sca_k$,
 then either $Q^\ell_{\bz}\st Q^k_{\az}$ or $Q^k_{\az}\cap Q^\ell_{\bz}=\emptyset$;

\item[\rm(iii)] for any $k\in\zz$,
\begin{equation*}\label{b.h}
\cx=\bigcup_{\az\in\sca_k}Q^k_{\az}\quad (disjoint\ union);
\end{equation*}

\item[\rm(iv)] for any $k\in\zz$ and $\az\in\sca_k$ with $\sca_k$ as in \eqref{b.v},
\begin{equation*}\label{b.i}
B\lf(x^k_{\az},\frac13\dz^k\r)\st Q^k_{\az}
\st B\lf(x^k_{\az},4\dz^k\r)=:B\lf(Q^k_{\az}\r);
\end{equation*}

\item[\rm(v)] if $k\in\zz,\ \ell\in\zz\cap[k,\fz),\ \az,\,\bz\in\sca_k$ and
$Q^\ell_{\bz}\st Q^k_{\az}$, then $B(Q^\ell_{\bz})\st B(Q^k_{\az}).$

\end{itemize}
The open and closed cubes $\mathring{Q}^k_{\az}$ and $\ov{Q}^k_{\az}$,
with $(k,\az)\in\sca$, here and hereafter,
\begin{equation}\label{b.s}
\sca:=\{(k,\az):\ k\in\zz,\ \az\in\sca_k\},
\end{equation}
depend only on the points $x^\ell_\bz$ for
$\bz\in\sca_{\ell}$ and $\ell\in\zz\cap[k,\fz)$.
The half-open cubes $Q^k_{\az}$,
with $(k,\az)\in\sca$, depend on $x^\ell_\bz$ for $\bz\in\sca_{\ell}$
and $\ell\in\zz\cap[\min\{k,k_0\},\fz)$,
where $k_0\in\zz$ is a preassigned number
entering the construction.
\end{theorem}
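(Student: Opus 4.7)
This is a restatement in present notation of \cite[Theorem 2.2]{hk12}, so my plan is to follow the Hyt\"onen--Kairema construction and check how each of the properties (i)--(v) emerges. The construction begins by organizing the centers $\bigcup_{k\in\zz}\scx^k$ into a tree: fix a well-ordering on each $\sca_k$, and, using the reference level $k_0$ as a pivot, define a parent map $\pi_k:\sca_k\to\sca_{k-1}$ by declaring $x^{k-1}_{\pi_k(\az)}$ to be the (well-ordering-smallest) closest point of $\scx^{k-1}$ to $x^k_{\az}$. The covering bound $d(x,\scx^{k-1})<2\dz^{k-1}$ from \cite[Lemma 2.1]{ah13}, together with the geometric doubling of $(\cx,d)$, reduces this to a selection among finitely many candidates, so the map is well defined.

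Next, for each $x\in\cx$ and $k\in\zz$, I introduce the nearest-center index $\phi_k(x)\in\sca_k$ (again with well-ordering tie-breaking) and define
\[
\psi_k(x):=\lim_{\ell\to\fz}\pi^{\ell-k}(\phi_\ell(x)),\qquad
Q^k_{\az}:=\lf\{x\in\cx:\ \psi_k(x)=\az\r\},
\]
where $\pi^{\ell-k}$ is the $(\ell-k)$-fold iterated parent; the open and closed cubes are then just the topological interior $\mathring{Q}^k_{\az}$ and closure $\ov{Q}^k_{\az}$. Stability of the limit $\psi_k(x)$ should follow from the parent-hop estimate $d(x^\ell_{\gz},x^{\ell-1}_{\pi_\ell(\gz)})<2\dz^{\ell-1}$, which forces the ancestor chain to lock into a single level-$k$ cube once $\ell$ is sufficiently large. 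Property (i) is tautological; (iii) holds because $\psi_k$ is single-valued on $\cx$; and (ii), (v) follow from the tree structure since $Q^\ell_{\bz}\cap Q^k_{\az}\neq\emptyset$ with $\ell\ge k$ forces $\pi^{\ell-k}(\bz)=\az$.

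The heart of the proof is (iv). For the inner inclusion $B(x^k_{\az},\dz^k/3)\st Q^k_{\az}$, the separation $d(x^k_{\az},x^k_{\bz})\ge\dz^k$ of \cite[Lemma 2.1]{ah13} gives $\phi_k(y)=\az$ for every $y$ in that ball, and a geometric-series argument using $\dz\le 1/1000$ then propagates the equality up the ancestor chain to yield $\psi_k(y)=\az$. For the outer inclusion $Q^k_{\az}\st B(x^k_{\az},4\dz^k)$, I telescope along the chain $\phi_\ell(y)\mapsto\pi_\ell(\phi_\ell(y))\mapsto\cdots\mapsto\az$: consecutive hops contribute at most $2\dz^{j-1}$ while the initial distance $d(y,x^\ell_{\phi_\ell(y)})$ lies below $2\dz^\ell$, so one obtains $d(y,x^k_{\az})<\frac{2\dz^k}{1-\dz}<4\dz^k$ in the limit $\ell\to\fz$.

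The main obstacle will be ensuring that the countable sequence of ancestor choices defining $\psi_k(x)$ stabilizes for \emph{every} $x\in\cx$, not merely almost every one, so that the partition $\{Q^k_{\az}\}_{\az\in\sca_k}$ is pointwise well defined and the tree structure is genuinely consistent across all scales; this consistency is what ultimately binds the open, half-open and closed cubes together. Once it is secured, the asymmetric dependence statement at the end of the theorem is read off the construction: only data at levels $\ge k$ enter the nearest-center map $\phi_k$ and the purely topological operations producing $\mathring{Q}^k_{\az}$ and $\ov{Q}^k_{\az}$, whereas the parent tie-breaking built into the half-open $Q^k_{\az}$ keeps them sensitive to reference points at all levels $\ge\min\{k,k_0\}$.
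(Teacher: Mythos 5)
The paper gives no proof of this statement: it is quoted directly from Hyt\"onen and Kairema \cite[Theorem 2.2]{hk12} after observing that the reference points of \eqref{2.1x} satisfy the hypotheses there. Your proposal attempts to rebuild that construction, but the defining formula $Q^k_{\az}:=\{x\in\cx:\ \psi_k(x)=\az\}$ with $\psi_k(x)=\lim_{\ell\to\fz}\pi^{\ell-k}(\phi_\ell(x))$ has a genuine gap which you flag but do not close, and which cannot be closed in this form: for a point $x$ on the common boundary of two cubes the sequence $\pi^{\ell-k}(\phi_\ell(x))$ can oscillate forever between two level-$k$ indices, so the limit does not exist and $\{Q^k_{\az}\}_{\az\in\sca_k}$ is not a well-defined partition. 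This is exactly why Hyt\"onen--Kairema do \emph{not} define the cubes this way: they take $\ov{Q}^k_{\az}$ to be the closure of the set of descendant reference points $\{x^\ell_{\bz}:\ (\ell,\bz)\le(k,\az)\}$, take $\mathring{Q}^k_{\az}$ as the complement of the union of the other closed cubes, and obtain the half-open cubes by an explicit ordered disjointification of the boundaries, carried out top-down from the reference level $k_0$. A telltale sign that your formula is not repairable in place is the dependence statement at the end of the theorem: your $Q^k_{\az}$ would depend only on data at levels $\ge k$, whereas the half-open cubes are asserted to depend on levels $\ge\min\{k,k_0\}$; that extra dependence is created precisely by the top-down disjointification your construction omits.

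Two further steps need more than you give them. The nesting (ii), and hence (v), require the compatibility $\psi_k=\pi^{\ell-k}\circ\psi_\ell$, which again presupposes the nonexistent limits. And in (iv) the inner inclusion does not follow from telescoping alone: for $y\in B(x^k_{\az},\frac13\dz^k)$ the level-$k$ ancestor of $\phi_\ell(y)$ is only known to lie within roughly $2\dz^k/(1-\dz)$ of $y$, which exceeds the $\dz^k$ separation of $\scx^k$ and so does not force that ancestor to equal $\az$. One needs the additional lemma from \cite{hk12} that a level-$\ell$ reference point lying within $\frac12\dz^{\ell-1}$ of some $x^{\ell-1}_{\gz}$ must have $\gz$ as its parent, followed by an induction down the chain. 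The outer inclusion in (iv) and the bookkeeping with \cite[Lemma 2.1]{ah13} in your sketch are fine.
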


\begin{remark}\label{rb.d}
(i) In what follows, let $\le$ be the
\emph{partial order} for dyadic points as in \cite[Lemma 2.10]{hk12}.
It was shown in \cite[Lemma 2.10]{hk12} with $C_0=2$
that, if
$k\in\zz$, $\az\in\sca_k$ with $\sca_k$ as in \eqref{b.v},
$\bz\in\sca_{k+1}$ and
$(k+1,\bz)\le(k,\az)$, then
$d(x_\bz^{k+1},x_\az^k)<2\dz^k.$

(ii) For any $(k,\az)\in\sca$, let
\begin{equation}\label{b.k}
L(k,\az):=\lf\{\bz\in\sca_{k+1}:\ (k+1,\bz)\le(k,\az)\r\}.
\end{equation}
By the proof of \cite[Theorem 2.2]{hk12}
and the geometrically doubling property, we have the following
conclusions: $1\le\#L(k,\az)\le {\wz N}_0$ and
\begin{equation}\label{2.10x}
Q^k_{\az}=\bigcup_{\bz\in L(k,\,\az)}Q^{k+1}_{\bz},
\end{equation}
where ${\wz N}_0\in\nn$ is independent of $k$ and $\az$.
Here and hereafter, for any finite set $\mathfrak{C}$,
$\#\mathfrak{C}$ denotes its \emph{cardinality}.
\end{remark}

The following useful estimate about the $1$-separated set
is from \cite[Lemma 6.4]{ah13}.

\begin{lemma}\label{lb.x}
Let $\Xi$ be a $1$-separated set in a geometrically doubling
metric space $(\cx,d)$ with positive constant $N_0$.
Then, for all $\ez\in(0,\fz)$, there exists a positive constant
$C_{(\ez,\,N_0)}$, depending on $\ez$ and $N_0$, such that
$$
\sup_{a\in\cx}e^{\ez d(a,\,\Xi)/2}\sum_{b\in\Xi}
e^{-\ez d(a,\,b)}\le C_{(\ez,\,N_0)},
$$
here and hereafter, for any set $\Xi\st\cx$ and $x\in\cx$,
$d(x,\Xi):=\inf_{a\in\Xi}d(x,a)$.
\end{lemma}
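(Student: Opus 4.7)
The plan is to control the sum by decomposing $\Xi$ into annular shells around $a$ and to bound the cardinality of each shell via the geometrically doubling property. First I would set $r:=d(a,\Xi)$ and, for each integer $k\ge 0$, define
$$
\Xi_k:=\lf\{b\in\Xi:\ r+k\le d(a,b)<r+k+1\r\}.
$$
Since $d(a,b)\ge r$ for every $b\in\Xi$, the family $\{\Xi_k\}_{k=0}^{\fz}$ partitions $\Xi$, and the whole sum reorganizes as $\sum_{k\ge 0}\sum_{b\in\Xi_k}e^{-\ez d(a,b)}\le\sum_{k\ge 0}(\#\Xi_k)e^{-\ez(r+k)}$.

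Next I would bound $\#\Xi_k$ polynomially in $r+k$. Observe that $\Xi_k\st B(a,r+k+1)$ and, because $\Xi$ is $1$-separated, the balls $\{B(b,1/2)\}_{b\in\Xi_k}$ are pairwise disjoint and all contained in $B(a,r+k+3/2)$. Applying Remark \ref{rb.l}(iii) with the choice of small parameter $\ez':=1/[2(r+k+3/2)]$ when this lies in $(0,1)$ (and using the trivial bound for the finitely many exceptional values where $r+k$ is too small), one obtains
$$
\#\Xi_k\le C_{(N_0)}(1+r+k)^{G_0},
$$
where $G_0=\log_2 N_0$ as in Remark \ref{rb.l}. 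Combining this with the previous display yields
$$
\sum_{b\in\Xi}e^{-\ez d(a,b)}
\le C_{(N_0)}e^{-\ez r}\sum_{k=0}^{\fz}(1+r+k)^{G_0}e^{-\ez k}.
$$

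Finally, I would apply the elementary inequality $t^{G_0}\le C_{(\ez,\,G_0)}e^{\ez t/4}$ valid for all $t\ge 0$ to the factor $(1+r+k)^{G_0}$; multiplying through by $e^{\ez r/2}$ then produces
$$
e^{\ez d(a,\Xi)/2}\sum_{b\in\Xi}e^{-\ez d(a,b)}
\le C_{(\ez,\,N_0)}e^{-\ez r/4}\sum_{k=0}^{\fz}e^{-3\ez k/4}
\le C_{(\ez,\,N_0)},
$$
since the exponents combine as $-1+\tfrac14+\tfrac12=-\tfrac14$ in $r$ and $-1+\tfrac14=-\tfrac34$ in $k$. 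Taking the supremum over $a\in\cx$ gives the claim. No serious obstacle arises; the only mildly delicate step is the polynomial counting bound on $\#\Xi_k$, which is a by-now standard consequence of the equivalent formulations of geometric doubling recorded in Remark \ref{rb.l}.
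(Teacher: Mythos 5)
Your argument is correct: the annular decomposition around $a$ starting at radius $r=d(a,\Xi)$, the cardinality bound $\#\Xi_k\lesssim(1+r+k)^{G_0}$ obtained from the $1$-separation and Remark~\ref{rb.l}(iii), and the absorption of the polynomial factor into $e^{\ez(\cdot)/4}$ all fit together, and the exponent bookkeeping ($-\tfrac14$ in $r$, $-\tfrac34$ in $k$) is right. The paper itself gives no proof --- it imports the statement from \cite[Lemma 6.4]{ah13} --- and your argument is essentially the standard one used there, so there is nothing further to reconcile.
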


Now we recall more notions and
notation from \cite{ah13}. Let
$(\Omega,\mathscr{F},\mathbb{P}_{\omega})$ be
the \emph{natural probability measure space}
with the same notation as in \cite{ah13}, where $\mathscr{F}$ is
defined as the smallest $\sigma$-algebra containing the set
$$\lf\{\prod_{k\in\zz}A_k:\ A_k\st\Omega_k:={\{0,1,\ldots,L\}
\times\{1,\ldots,M\}}\ {\rm and\ only\ finite\ many}\
A_k\neq \Omega_k\r\},
$$
where $L$ and $M$ are defined as in \cite{ah13}.
For every $(k,\az)\in\sca$ with $\sca$ as in \eqref{b.s},
the spline function is defined by setting
$$s^k_\az(x):=\mathbb{P}_\oz\lf(
\lf\{\oz\in\boz:\ x\in\overline{Q}^k_\az(\oz)\r\}\r),
\quad x\in\cx.$$
Then the splines have the following properties:
\begin{enumerate}
\item[(i)] for all $(k,\az)\in\sca$ and $x\in\cx$,
$
\chi_{B(x^k_\az,\,\frac18\dz^k)}(x)
\le s^k_\az(x)\le\chi_{B(x^k_\az,\,8\dz^k)}(x);
$

\item[(ii)] for all $k\in\zz$, $\az,\,\bz\in\sca_k$,
with $\sca_k$ as in \eqref{b.v}, and $x\in\cx$,
\begin{equation}\label{b.y}
s^k_\az(x^k_\bz)=\dz_{\az\bz},\quad
\sum_{\az\in\sca_k}s^k_\az(x)=1\quad {\rm and}\quad
s^k_\az(x)=\sum_{\bz\in\sct_{k+1}}p^k_{\az\bz}s^{k+1}_\bz(x),
\end{equation}
where,  for each $k\in\zz$, $\sct_{k+1}\st\sca_{k+1}$
denotes some countable index set
$$
\dz_{\az\bz}:=\left\{
\begin{array}{cc}
1,\ \ &\ \ {\rm if}\ \ \az=\bz,\\
0,\ \ &\ \ {\rm if}\ \ \az\neq\bz,
\end{array}
\right.
$$
and $\{p^k_{\az\bz}\}_{\bz\in\sct_{k+1}}$ is
a finite nonzero set of nonnegative numbers with
$p^k_{\az\bz}\le1$ for all $\bz\in\sct_{k+1}$;

\item[(iii)] there exist positive constants $\eta\in(0,1]$ and $C$,
independent of $k$ and $\az$, such that,
for all $(k,\az)\in\sca$ and $x,\,y\in\cx$,
\begin{equation}\label{b.1}
\lf|s^k_\az(x)-s^k_\az(y)\r|\le C\lf[\frac{d(x,y)}{\dz^k}\r]^\eta.
\end{equation}
\end{enumerate}

By \cite[Theorem 5.1]{ah13}, we know that there exists a linear,
bounded uniformly on $k\in\zz$, and injective map $U_k:\ell^2(\sca_k)
\to\ltw$ with closed range, defined by
$$
U_k\lz:=\sum_{\az\in\sca_k}\frac{\lz_{\az}}{\sqrt{\mu^k_\az}}s^k_\az,
\quad \lz:=\lf\{\lz^k_\az\r\}_{\az\in\sca_k}\in\ell^2(\sca_k),
$$
here and hereafter, $\mu^k_\az:=\mu(B(x^k_\az,\dz^k))=:V(x^k_\az,\dz^k)$
for all $(k,\az)\in\sca$, $\ell^2(\sca_k)$ denotes the space of all
sequences $\lz:=\{\lz^k_\az\}_{\az\in\sca_k}\subset\cc$ such that
$$
\|\lz\|_{\ell^2(\sca_k)}:=\lf\{\sum_{\az\in\sca_k}
\lf|\lz^k_\az\r|^2\r\}^{1/2}<\fz.
$$

Observe that, if $k\in\zz$, $\lz,\,\wz{\lz}\in\ell^2(\sca_k)$,
$f=U_k\lz$ and $\wz{f}=U_k\wz{\lz}$, then
$$
\lf(f,\wz{f}\r)_{\ltw}=\lf(M_k\lz,\wz{\lz}\r)_{\ell^2(\sca_k)}
$$
with $M_k$ being the infinite matrix which has entries
$M_k(\az,\bz)=\frac{(s^k_\az,s^k_\bz)_{\ltw}}
{\sqrt{\mu^k_\az\mu^k_\bz}}$ for $\az,\,\bz\in\sca_k$. Let $U^*_k$
be the \emph{adjoint operator} of $U_k$ for all $k\in\zz$.
Thus, for each $k\in\zz$, $M_k=U_k^*U_k$ is bounded,
invertible, positive and self-adjoint on $\ell^2(\sca_k)$.
Let $V_k:=U_k(\ell^2(\sca_k))$ for all $k\in\zz$.
The following result from \cite{ah13} (with $\mu^k_\az$ replaced
by $\nu^k_\az:=\int_\cx s^k_\az\,d\mu$)
shows that $\{V_k\}_{k\in\zz}$ is a \emph{multiresolution analysis}
(for short, MRA) of $\ltw$.

\begin{theorem}\label{tb.n}
Suppose that $(\cx,d,\mu)$ is a metric measure space of homogeneous type.
Let $k\in\zz$ and $V_k$ be the closed linear span of
$\{s^k_\az\}_{\az\in\sca_k}$. Then
$V_k\st V_{k+1}$, $\overline{\bigcup_{k\in\zz}V_k}=\ltw$
and $\bigcap_{k\in\zz}V_k=\{0\}$.

Moreover, the functions $\{s^k_\az/\sqrt{\nu^k_\az}\}_{\az\in\sca_k}$
form a Riesz basis of $V_k$: for all sequences of complex numbers
$\{\lz_\az^k\}_{\az\in\sca_k}$,
$$
\lf\|\sum_{\az\in\sca_k}\lz^k_\az s^k_\az\r\|_{\ltw}
\sim\lf[\sum_{\az\in\sca_k}\lf|\lz_\az^k\r|^2\nu^k_\az\r]^{1/2}
$$
with equivalent positive constants independent of $k$ and
$\{\lz_\az^k\}_{\az\in\sca_k}$.
\end{theorem}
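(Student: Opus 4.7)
My plan is to treat the four assertions in the order nesting, Riesz basis
equivalence, density, and trivial intersection, since the Riesz constants
(independent of $k$) are what will drive the last step. The nesting
$V_k\st V_{k+1}$ is immediate from the refinement identity in \eqref{b.y}:
every $s^k_\az$ is a finite linear combination of $\{s^{k+1}_\bz\}_\bz$, hence
the finite linear span of $\{s^k_\az\}_\az$ sits inside $V_{k+1}$, and taking
closures yields the claim.

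For the Riesz basis equivalence, I would first note $\nu^k_\az\sim\mu^k_\az$
uniformly in $(k,\az)$: property (i) gives
$\mu(B(x^k_\az,\dz^k/8))\le\nu^k_\az\le\mu(B(x^k_\az,8\dz^k))$, and the
doubling estimate \eqref{a.b} compares both ends to $\mu^k_\az$.
The upper Riesz bound can be established by hand because
$\{B(x^k_\az,8\dz^k)\}_\az$ has bounded overlap (the centers are
$\dz^k$-separated and $(\cx,d)$ is geometrically doubling), so
$\|\sum_\az\lz^k_\az s^k_\az\|_{\ltw}^2
\ls\sum_\az|\lz^k_\az|^2\|s^k_\az\|_{\ltw}^2
\ls\sum_\az|\lz^k_\az|^2\nu^k_\az$. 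The matching lower bound is precisely
the boundedness of $U_k^{-1}:V_k\to\ell^2(\sca_k)$ uniformly in $k$, which is
the content of \cite[Theorem 5.1]{ah13} recalled above (closed range plus
injectivity of $U_k$), after rewriting $\mu^k_\az$ as the comparable quantity
$\nu^k_\az$.

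For density, since continuous compactly supported functions are dense in
$\ltw$, it suffices to approximate such an $f$. Using the partition of unity
$\sum_\az s^k_\az\equiv 1$ from \eqref{b.y}, I set
$f_k:=\sum_\az f(x^k_\az)s^k_\az\in V_k$; this is a finite sum because
$\{x^k_\az\}_{\az\in\sca_k}$ is $\dz^k$-separated and only centers in
$\supp f$ contribute. Then $f(x)-f_k(x)=\sum_\az[f(x)-f(x^k_\az)]s^k_\az(x)$,
and property (i) forces $d(x,x^k_\az)<8\dz^k$ whenever $s^k_\az(x)\neq 0$.
Uniform continuity of $f$ therefore yields $\|f-f_k\|_{\li}\to 0$ as
$k\to\fz$; since $\supp(f-f_k)$ remains inside a fixed bounded neighbourhood
of $\supp f$ once $\dz^k<1$, this upgrades to $\|f-f_k\|_{\ltw}\to 0$.

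The most delicate step is $\bigcap_k V_k=\{0\}$. Pick $f$ in this intersection
and expand $f=\sum_\az\lz^k_\az s^k_\az$ in $V_k$ for each $k$; the (uniform)
Riesz equivalence then gives $|\lz^k_\az|\ls\|f\|_{\ltw}/\sqrt{\nu^k_\az}$.
For almost every $x\in\cx$, only boundedly many $\az$ satisfy
$s^k_\az(x)\neq 0$ (geometric doubling applied to
$\{x^k_\az\}\cap B(x,8\dz^k)$), and each such $\az$ further satisfies
$\nu^k_\az\sim\mu(B(x,\dz^k))$ by property (i) plus doubling. Combined with
$\sum_\az s^k_\az(x)=1$, this yields
$|f(x)|\le\sum_\az|\lz^k_\az|s^k_\az(x)\ls\|f\|_{\ltw}/\sqrt{\mu(B(x,\dz^k))}$.
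Since $\diam\cx=\fz$ forces $\mu(\cx)=\fz$ (see \cite[Lemma 8.1]{ah13}),
sending $k\to-\fz$ pushes $\mu(B(x,\dz^k))\to\fz$, so $f=0$ almost
everywhere. The obstacle here, and the reason for handling the Riesz part
first, is that without $k$-uniformity of the Riesz constants this pointwise
bound would not collapse at large negative scales.
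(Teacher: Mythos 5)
Your proposal is essentially correct, but note that the paper does not prove Theorem \ref{tb.n} at all: it is imported verbatim from Auscher--Hyt\"onen \cite[Theorems 4.1 and 5.1]{ah13} (with $\mu^k_\az$ replaced by the comparable $\nu^k_\az$), so there is no in-paper argument to compare against. Your reconstruction of the standard proof is sound: the nesting from the refinement identity in \eqref{b.y}, the comparability $\nu^k_\az\sim\mu^k_\az$ from property (i) plus \eqref{a.b}, the upper Riesz bound from the bounded overlap of $\{B(x^k_\az,8\dz^k)\}_\az$ (Cauchy--Schwarz over the at most $N$ nonzero terms at each point, then $0\le s^k_\az\le 1$ so $\int (s^k_\az)^2\le\nu^k_\az$), the density via the quasi-interpolants $f_k=\sum_\az f(x^k_\az)s^k_\az$ and the partition of unity, and the trivial intersection via the pointwise bound $|f(x)|\ls\|f\|_{\ltw}[\mu(B(x,\dz^k))]^{-1/2}$ together with $\mu(\cx)=\fz$. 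Two small caveats. First, for the lower Riesz bound you write that it follows from ``closed range plus injectivity of $U_k$''; qualitatively that only gives a bounded inverse for each fixed $k$ by the open mapping theorem, not uniformity in $k$, and the $k$-uniform lower bound is precisely the substantive content of \cite[Theorem 5.1]{ah13} (proved there via uniform two-sided bounds on the Gram matrices $M_k=U_k^*U_k$). Since the whole theorem is anyway a quotation of that result, citing it quantitatively is fine, but you should not suggest the uniformity is a soft consequence. Second, in the intersection step the identity $f(x)=\sum_\az\lz^k_\az s^k_\az(x)$ a.e.\ deserves the one-line justification that on every bounded set the sum is finite (geometric doubling bounds the number of $\dz^k$-separated centers within $8\dz^k$ of that set), so the $L^2$ limit agrees with the pointwise sum off a null set depending on $k$, and one takes the countable union of these null sets over $k\in\zz$.
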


Recall that \cite[Theorem 6.1]{ah13} gives the system
$\{\wz{s}^k_\az\}_{\az\in\sca_k}$ of biorthogonal
splines in $V_k$ satisfying with
$
\lf(s^k_\az,\wz{s}^k_\bz\r)_{\ltw}=\dz_{\az\bz}
$.
Now we sketch the construction of the wavelet basis
$\{\psi_{\bz}^k\}_{k\in\zz,\,\bz\in\scg_k}$,
here and hereafter, for all $k\in\zz$,
\begin{equation}\label{b.w}
\scg_k:=\sca_{k+1}\bh\sca_k
\end{equation}
with $\sca_k$ as in \eqref{b.v}.
Let $k\in\zz$. The inverse of $U_{k+1}$,
$U^{-1}_{k+1}:\ f\mapsto\{f(x^{k+1}_\bz)\sqrt{\mu^{k+1}_\bz}\}
_{\bz\in\sca_{k+1}}$,
is an isomorphism from $V_{k+1}$ onto $\ell^2(\sca_{k+1})$. Let
$$
Y_k:=U_{k+1}\lf(\lf\{\lz:=\lf\{\lz^{k+1}_\bz\r\}_{\bz\in\sca_{k+1}}
\in\ell^2(\sca_{k+1}):\
\lz^{k+1}_\bz=0\ {\rm for\ all\ }\bz\in\sca_k\r\}\r),
$$
which is identified with $U_{k+1}(\ell^2(\scg_k))$. Obviously,
$V_{k+1}=V_k\oplus Y_k$. Let $W_k$ be the \emph{orthogonal
complement} (in $\ltw$) of $V_k$ in $V_{k+1}$ and $Q_k$ the \emph{orthogonal
projection} onto $W_k$. Then the restriction of $Q_k$ to $Y_k$
is an isomorphism from $Y_k$ onto $W_k$. Then
$\{s^{k+1}_\bz\}_{\bz\in\scg_k}$
is an unconditional basis of $Y_k$ and its image under $Q_k$
is an unconditional basis of $W_k$. Thus, for all $f\in V_{k+1}$,
$$
Q_kf=f-\sum_{\az\in\sca_k}\lf(f,\wz{s}^k_\az\r)_{\ltw}s^k_\az.
$$

Moreover, the matrix $\{\wz{M}(\az,\bz)\}_{(\az,\,\bz)\in\scg_k\times\scg_k}$,
with
\begin{equation}\label{2.17x}
\wz{M}(\az,\bz):=\frac{(Q_ks^{k+1}_\az,Q_ks^{k+1}_\bz)_{\ltw}}
{\sqrt{V(y^k_\az,\dz^k)V(y^k_\bz,\dz^k)}}
=\frac{(s^{k+1}_\az,s^{k+1}_\bz)_{\ltw}}
{\sqrt{V(y^k_\az,\dz^k)V(y^k_\bz,\dz^k)}}
\end{equation}
for all $(\az,\bz)\in\scg_k\times\scg_k$,
is bounded uniformly, invertible, positive and self-adjoint
on $\ell^2(\scg_k)$, where $y^k_\bz:=x^{k+1}_\bz$ for all
$k\in\zz$ and $\bz\in\scg_k$.
Indeed, observe that $\wz{M}=(U_k|_{\ell^2(\scg_k)})^*
(U_k|_{\ell^2(\scg_k)})$, where $U_k|_{\ell^2(\scg_k)}$
denotes the restriction of $U_k$ to $\ell^2(\scg_k)$
whose \emph{adjoint operator} is denoted by
$(U_k|_{\ell^2(\scg_k)})^*$, is
the restriction of $M_{k+1}$ to $\ell^2(\scg_k)$,
which implies the desired result.

From \cite[Theorem 12.33]{r91}, it follows that
$\wz{M}^{-1/2}$ exists and is bounded, invertible,
positive and self-adjoint on $\ell^2(\scg_k)$.
Then the wavelet functions are defined by setting,
for all $k\in\zz$, $\az\in\scg_k$ and $x\in\cx$,
\begin{equation}\label{2.17x2}
\psi^k_\az(x):=\lf(U_k|_{\ell^2(\scg_k)}\r)\wz{M}^{-1/2}
\dz^{k+1}_\az(x)=\sum_{\bz\in\scg_k}\wz{M}^{-1/2}(\az,\bz)
\frac{s^{k+1}_\bz(x)}{\sqrt{\mu^{k+1}_\bz}},
\end{equation}
where $\{\dz^{k+1}_\bz\}_{\bz\in\scg_k}$ is the
\emph{canonical orthonormal basis} of $\ell^2(\scg_k)$.

Now we are ready to introduce the following
notable orthonormal basis of regular wavelets constructed by
Auscher and Hyt\"onen (\cite[Theorem 7.1]{ah13}) with
a slight difference on the notation
$$
\lf\{\psi_{\az,\,\bz}^k\r\}_{(k,\,\az)\in\wz{\sca},\,
\bz\in\wz{L}(k,\,\az)}:=\lf\{\psi_\bz^k\r\}_{k\in\zz,\,
\bz\in\scg_k},
$$
where
\begin{equation}\label{2.19x1}
\wz{\sca}:=\{(k,\az)\in\sca:\ \#L(k,\az)>1\}
\end{equation}
and, for all $(k,\az)\in\wz{\sca}$,
\begin{equation}\label{2.19x2}
\wz{L}(k,\az):=L(k,\az)\bh\{\az\},
\end{equation}
via the fact that, for any $k\in\zz$,
$$
\sca_{k+1}\bh\sca_k=\bigcup_{\{\az\in\sca_k:\ \#L(k,\,\az)>1\}}
\wz{L}(k,\az).
$$

\begin{theorem}\label{tb.a}
Let $(\cx,d,\mu)$ be a metric measure space of homogeneous type.
Then there exists an
orthonormal basis $\{\psi_{\az,\,\bz}^k\}_{(k,\,\az)\in\wz{\sca},\,
\bz\in\wz{L}(k,\,\az)}$
of $\ltw$ and positive constants $\eta\in(0,1]$
as in \eqref{b.1}, $\nu$ and $C_{(\eta)}$,
independent of $k$, $\az$ and $\bz$, such that
\begin{equation}\label{b.a}
\lf|\psi_{\az,\,\bz}^k(x)\r|\le\frac{C_{(\eta)}}
{\sqrt{V(x_{\bz}^{k+1},\dz^k)}}
e^{-\nu\dz^{-k}d(x_{\bz}^{k+1},\,x)}\ \ {\rm for\ all}\ x\in\cx,
\end{equation}
\begin{equation}\label{b.b}
\lf|\psi_{\az,\,\bz}^k(x)-\psi_{\az,\,\bz}^k(y)\r|
\le\frac{C_{(\eta)}}{\sqrt{V(x_{\bz}^{k+1},\dz^k)}}
\lf[\frac{d(x,y)}{\dz^k}\r]^{\eta}
e^{-\nu\dz^{-k}d(x_{\bz}^{k+1},\,x)}
\end{equation}
for all $x,\,y\in\cx$ satisfying $d(x,y)\le\dz^k$, and
\begin{equation}\label{b.c}
\int_{\cx}\psi_{\az,\,\bz}^k(x)\,d\mu(x)=0.
\end{equation}
\end{theorem}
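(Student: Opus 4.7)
The plan is to exploit the explicit construction \eqref{2.17x2}, namely $\psi^k_\alpha = (U_k|_{\ell^2(\scg_k)}) \wz{M}^{-1/2} \dz^{k+1}_\alpha$, and deduce the three conclusions \eqref{b.a}--\eqref{b.c} from properties of $\wz M$ together with the spline estimates already available. Orthonormality within level $k$ is automatic by construction: using $(U_k\lz,U_k\mu)_{\ltw}=(M_{k+1}\lz,\mu)_{\ell^2(\scg_k)}$, self-adjointness of $\wz M^{-1/2}$, and $\wz M^{1/2}\wz M^{-1/2}=I$, one computes $(\psi^k_\az,\psi^k_\gz)_{\ltw}=(\dz^{k+1}_\az,\dz^{k+1}_\gz)_{\ell^2(\scg_k)}=\dz_{\az\gz}$. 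Orthogonality across levels and completeness in $\ltw$ then follow from the MRA decomposition of Theorem~\ref{tb.n}: $\ltw=\bigoplus_{k\in\zz}W_k$ and $\{\psi^k_\az\}_{\az\in\scg_k}$ is an orthonormal basis of $W_k$.

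For the vanishing moment \eqref{b.c}, I would use $\psi^k_\az\in W_k\perp V_k$ together with the partition of unity $\sum_{\bz\in\sca_k}s^k_\bz=1$ from \eqref{b.y}. Pairing $\psi^k_\az$ against this identity gives $\int_\cx\psi^k_{\az,\bz}\,d\mu=\sum_{\gz\in\sca_k}(\psi^k_{\az,\bz},s^k_\gz)_{\ltw}=0$; the interchange of summation and integration is justified a posteriori by \eqref{b.a} and the local finiteness (at fixed $k$) of the family $\{\supp s^k_\gz\}_{\gz\in\sca_k}$.

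The substantive work is the exponential decay \eqref{b.a}. I would proceed in three steps. First, deduce from the support estimate $s^{k+1}_\bz\le\chi_{B(x^{k+1}_\bz,8\dz^{k+1})}$ that $\wz M(\az,\bz)$ vanishes once $d(y^k_\az,y^k_\bz)\gtrsim\dz^k$, hence enjoys the (trivial but useful) exponential off-diagonal bound $|\wz M(\az,\bz)|\le C e^{-\nu\dz^{-k}d(y^k_\az,y^k_\bz)}$ uniformly in $k$. Second, transfer this off-diagonal exponential decay from $\wz M$ to $\wz M^{-1/2}$ via the continuous functional calculus: writing $x^{-1/2}$ as a contour integral of the resolvent and exploiting that $\wz M$ is uniformly bounded above and below (Theorem~\ref{tb.n}), one runs a Combes--Thomas style argument on exponentially weighted $\ell^2(\scg_k)$ to obtain $|\wz M^{-1/2}(\az,\bz)|\le C e^{-\nu'\dz^{-k}d(y^k_\az,y^k_\bz)}$ uniformly in $k$. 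Third, substitute this bound into \eqref{2.17x2}, use the pointwise size of the splines and the doubling condition \eqref{a.b} to compare $V(x^{k+1}_\bz,\dz^k)$ with $V(x^{k+1}_\gz,\dz^k)$, and sum over $\gz\in\scg_k$ using Lemma~\ref{lb.x} applied to the $1$-separated set $\{\dz^{-k}y^k_\gz\}_\gz$ in the rescaled metric. For the Hölder continuity \eqref{b.b}, the same expansion is used, but \eqref{b.1} replaces the pointwise size of the splines; the restriction $d(x,y)\le\dz^k$ allows the factor $[d(x,y)/\dz^k]^{\eta}$ to be pulled out while retaining a centered exponential weight, and Lemma~\ref{lb.x} again sums the tails.

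The main obstacle I anticipate is step two: upgrading exponential off-diagonal decay from $\wz M$ to $\wz M^{-1/2}$ with constants independent of $k$. This demands uniform upper and lower spectral bounds on $\wz M$, which must be tracked through the Riesz basis estimate of Theorem~\ref{tb.n}, and a careful choice of the exponential weight so that conjugating $\wz M$ by it stays invertible, yielding a uniform $\nu>0$ in \eqref{b.a} and \eqref{b.b}.
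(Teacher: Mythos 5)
This theorem is not proved in the paper: it is imported verbatim from Auscher and Hyt\"onen \cite[Theorem 7.1]{ah13}, so there is no in-paper argument to compare against. Your outline does, however, track the actual proof in \cite{ah13} quite closely in overall structure: orthonormality within a level from $\wz M^{1/2}\wz M^{-1/2}=Id$ and across levels from the MRA splitting $\ltw=\bigoplus_k W_k$; the vanishing moment from $W_k\perp V_k$ and the partition of unity $\sum_\gz s^k_\gz=1$; and the decay estimates by pushing exponential off-diagonal decay from the banded matrix $\wz M$ through to $\wz M^{-1/2}$ and then summing the spline expansion \eqref{2.17x2} with Lemma \ref{lb.x}. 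The one genuine divergence is your step two. Auscher--Hyt\"onen (following Lemari\'e \cite{l89}; the same device appears in the proof of Theorem \ref{tb.m} here) write $\wz M=2\|\wz M\|_{\cl(\ell^2(\scg_k))}(Id-A)$ with $\|A\|_{\cl(\ell^2(\scg_k))}\le 1-a/(2\|\wz M\|)<1$ and expand $(Id-A)^{-1/2}=\sum_{n\ge0}p_nA^n$, $0<p_n\ls n^{1/2}$, using a lemma that a product of $n$ matrices with exponential off-diagonal decay retains exponential decay at a rate that degrades controllably in $n$, so that the series still sums to a positive rate $\nu$. Your Combes--Thomas conjugation of the resolvent on exponentially weighted $\ell^2(\scg_k)$, fed into a contour representation of $t^{-1/2}$, is a legitimate alternative and rests on exactly the same input you identify as the crux: uniform (in $k$) upper and lower spectral bounds for $\wz M$, which come from $\wz M$ being the restriction of $M_{k+1}=U_{k+1}^*U_{k+1}$ to $\ell^2(\scg_k)$ together with the uniform Riesz basis bounds of Theorem \ref{tb.n}. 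One small point to make explicit in your treatment of \eqref{b.b}: the H\"older bound \eqref{b.1} carries no localization, so to keep the centered exponential weight you must also use that $s^{k+1}_\gz(x)-s^{k+1}_\gz(y)$ vanishes unless $x$ or $y$ lies in $B(x^{k+1}_\gz,8\dz^{k+1})$, which (given $d(x,y)\le\dz^k$) lets you insert the factor $e^{-\nu\dz^{-k}d(x^{k+1}_\gz,x)}$ at the cost of a harmless constant before summing over $\gz$.
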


Now we give out an important property of $\psi^k_\az$
which is crucial to the succeeding context.

\begin{theorem}\label{tb.m}
Let $(\cx,d,\mu)$ be a metric measure space of homogeneous type.
Then there exist positive constants $\ez_0$ and $C$,
independent of $k$, $\az$ and $\bz$, such
that, for all $(k,\az)\in\wz{\sca}$ with $\wz{\sca}$ as in
\eqref{2.19x1},
$\bz\in\wz{L}(k,\az)$ with $\wz{L}(k,\az)$ as in \eqref{2.19x2},
and $x\in B(y^k_\bz,\ez_0\dz^k)\st Q^k_\az$,
$$
\lf|\psi^k_{\az,\,\bz}(x)\r|\ge C\frac1{\sqrt{\mu(Q^k_\az)}}.
$$
\end{theorem}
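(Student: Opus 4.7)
The plan is to evaluate $\psi^k_{\alpha,\beta}$ first at the central reference point $y^k_\beta=x^{k+1}_\beta$, where the spline expansion collapses to a single diagonal term, and then propagate this lower bound to a small ball around $y^k_\beta$ using the Hölder regularity \eqref{b.b} of the wavelet. The final step is a standard volume comparison to replace $1/\sqrt{\mu^{k+1}_\beta}$ by $1/\sqrt{\mu(Q^k_\alpha)}$ via the doubling condition.

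First I would exploit the interpolation identity $s^{k+1}_\gamma(y^k_\beta)=\delta_{\beta\gamma}$ from \eqref{b.y}. Plugging $x=y^k_\beta$ into the defining formula \eqref{2.17x2} yields
\begin{equation*}
\psi^k_{\alpha,\,\beta}(y^k_\beta)=\sum_{\gamma\in\scg_k}\wz{M}^{-1/2}(\beta,\gamma)\frac{s^{k+1}_\gamma(y^k_\beta)}{\sqrt{\mu^{k+1}_\gamma}}=\frac{\wz{M}^{-1/2}(\beta,\beta)}{\sqrt{\mu^{k+1}_\beta}}.
\end{equation*}
Hence the heart of the argument is a uniform lower bound for the diagonal entry $\wz{M}^{-1/2}(\beta,\beta)=\langle \wz{M}^{-1/2}\delta^{k+1}_\beta,\delta^{k+1}_\beta\rangle_{\ell^2(\scg_k)}$. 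Because $\wz{M}$, as discussed after \eqref{2.17x}, is bounded, invertible, positive and self-adjoint on $\ell^2(\scg_k)$ with bounds independent of $k$, the continuous functional calculus (the same tool already cited from \cite[Theorem 12.33]{r91}) gives a constant $c_0>0$, independent of $k$ and $\beta$, such that $\wz{M}^{-1/2}\ge c_0 I$ in the operator ordering. Testing against $\delta^{k+1}_\beta$ yields $\wz{M}^{-1/2}(\beta,\beta)\ge c_0$, and therefore $|\psi^k_{\alpha,\,\beta}(y^k_\beta)|\ge c_0/\sqrt{\mu^{k+1}_\beta}$.

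Next I would run a perturbation argument to extend this lower bound to a neighborhood. For $x\in B(y^k_\beta,\ez_0\dz^k)$ with $\ez_0\in(0,1]$, the Hölder estimate \eqref{b.b} gives
\begin{equation*}
\lf|\psi^k_{\alpha,\,\beta}(x)-\psi^k_{\alpha,\,\beta}(y^k_\beta)\r|\le\frac{C_{(\eta)}\,\ez_0^{\eta}}{\sqrt{V(x^{k+1}_\beta,\dz^k)}}\le\frac{C_{(\eta)}\,\ez_0^{\eta}}{\sqrt{\mu^{k+1}_\beta}},
\end{equation*}
since $\mu^{k+1}_\beta=V(x^{k+1}_\beta,\dz^{k+1})\le V(x^{k+1}_\beta,\dz^k)$. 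Choosing $\ez_0$ so small that additionally $C_{(\eta)}\ez_0^{\eta}\le c_0/2$, the triangle inequality yields $|\psi^k_{\alpha,\,\beta}(x)|\ge (c_0/2)/\sqrt{\mu^{k+1}_\beta}$ for all such $x$.

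Finally I would convert the bound from $\mu^{k+1}_\beta$ to $\mu(Q^k_\alpha)$. Since $\beta\in\wz{L}(k,\alpha)$ forces $Q^{k+1}_\beta\subset Q^k_\alpha$ by \eqref{2.10x}, and since Theorem~\ref{tb.c}(iv) and the doubling condition \eqref{a.b} give $\mu(Q^{k+1}_\beta)\ge\mu(B(x^{k+1}_\beta,\dz^{k+1}/3))\gtrsim\mu^{k+1}_\beta$, we obtain $\mu^{k+1}_\beta\lesssim\mu(Q^k_\alpha)$, uniformly in $(k,\alpha,\beta)$. At the same time, shrinking $\ez_0$ further if necessary to ensure $\ez_0\le \dz/3$ guarantees $B(y^k_\beta,\ez_0\dz^k)\subset B(x^{k+1}_\beta,\dz^{k+1}/3)\subset Q^{k+1}_\beta\subset Q^k_\alpha$, so the geometric containment in the statement is automatic. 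Combining everything yields the claimed bound. The only delicate point is the uniform operator inequality $\wz{M}^{-1/2}\ge c_0 I$; this is essentially the \emph{lower bound on wavelets} referred to as a crucial ingredient throughout the introduction, and it hinges on the uniform-in-$k$ Riesz basis constants supplied by Theorem~\ref{tb.n} (through $\wz{M}=(U_k|_{\ell^2(\scg_k)})^*(U_k|_{\ell^2(\scg_k)})$) rather than on any direct matrix estimate.
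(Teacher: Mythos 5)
Your proposal is correct, and its skeleton coincides with the paper's: evaluate $\psi^k_{\az,\,\bz}$ at $y^k_\bz$ using $s^{k+1}_\gz(y^k_\bz)=\dz_{\bz\gz}$ so that only the diagonal entry $\wz{M}^{-1/2}(\bz,\bz)/\sqrt{\mu^{k+1}_\bz}$ survives, propagate the lower bound to $B(y^k_\bz,\ez_0\dz^k)$ via the H\"older estimate \eqref{b.b}, and finish with a doubling comparison of $\mu^{k+1}_\bz$ and $\mu(Q^k_\az)$. Where you genuinely diverge is at the crux, the uniform bound $\wz{M}^{-1/2}(\bz,\bz)\gs1$. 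The paper, adapting Lemari\'e, writes $\wz{M}=2\|\wz{M}\|_{\cl(\ell^2(\scg_k))}(Id-A)$ with $\|A\|_{\cl(\ell^2(\scg_k))}<1$, expands $(Id-A)^{-1/2}=\sum_{n}p_nA^n$ with $p_n>0$, and isolates the $n=1$ term using $A(\bz,\bz)\ge1/2$ and the positivity of each $A^n$. You instead observe that, since $\sz(\wz{M})\st(0,\|\wz{M}\|_{\cl(\ell^2(\scg_k))}]$ with the norm bounded uniformly in $k$, the functional calculus gives $\wz{M}^{-1/2}\ge\|\wz{M}\|_{\cl(\ell^2(\scg_k))}^{-1/2}\,Id$ in the operator order, and testing against the unit vector $\dz^{k+1}_\bz$ yields the diagonal bound directly. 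This is a legitimate and shorter route: an operator lower bound does pass to diagonal entries, and the required uniform control of $\|\wz M\|$ is exactly what the uniform Riesz constants behind $\wz{M}=(U_k|_{\ell^2(\scg_k)})^*(U_k|_{\ell^2(\scg_k)})$ provide, so the power-series manipulation is not needed. Your endgame is also marginally different but equally valid: you get $\mu^{k+1}_\bz\ls\mu(Q^{k+1}_\bz)\le\mu(Q^k_\az)$ from $Q^{k+1}_\bz\st Q^k_\az$ and Theorem \ref{tb.c}(iv), whereas the paper uses $d(x^{k+1}_\bz,x^k_\az)<2\dz^k$; and your choice $\ez_0\le\dz/3$ correctly secures the containment $B(y^k_\bz,\ez_0\dz^k)\st Q^k_\az$ asserted in the statement.
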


\begin{proof}
Let $(k,\az)\in\wz{\sca}$ and $\bz\in\wz{L}(k,\az)$.
We first show that
\begin{equation}\label{b.n}
\lf|\wz{M}^{1/2}(\bz,\bz)\r|\ge c_3,
\end{equation}
where $\wz{M}:=\{\wz{M}(\az,\bz)\}_{(\az,\,\bz)\in\scg_k\times\scg_k}$
is as in \eqref{2.17x} and
$c_3$ is a positive constant independent of
$\az$, $\bz$ and $k$.
To this end, we adopt an idea from the proof of
\cite[Theorem 5]{l89}; see also the proof of \cite[Lemma 6.5]{ah13}.

Indeed, denote the \emph{spectrum} and the \emph{resolvent set} of
$\wz{M}$ by $\sz(\wz{M})$ and $\rho(\wz{M})$, respectively.
The spectral radius of $\wz{M}$ is defined by setting
$r(\wz{M}):=\sup\{|\lz|:\ \lz\in\sz(\wz{M})\}$.
Then, since $\wz{M}$ is positive and self-adjoint, it follows that
$\{\lz\in(0,\fz):\ \lz>r(\wz{M})\}\st\rho(\wz{M})$
and hence
$$\sz\lf(\wz{M}\r)\st\lf\{\lz\in(0,\fz):\ \lz\le r\lf(\wz{M}\r)\r\}.$$
Furthermore, by the facts that $\ell^2(\scg_k)$,
with $\scg_k$ as in \eqref{b.w}, is a Hilbert space
and that $\wz{M}$ is self-adjoint, and \cite[Theorem VI.6]{rs80},
we see that $r(\wz{M})=\|\wz{M}\|_{\cl(\ell^2(\scg_k))}$, which,
combined with the fact that $\wz{M}$ is positive,
invertible and bounded, implies that
$\sz(\wz{M})\st[a,b]$ for some $a,b\in(0,\fz)$ satisfying
$0<a<b\le\|\wz{M}\|_{\cl(\ell^2(\scg_k))}$,
since $\sz(\wz{M})$ is closed (see \cite[Theorem VI.5]{rs80}).
Here and hereafter, for a normed linear space $E$ and a
bounded linear operator $T$ from $E$ to $E$, we use $\|T\|_{\cl(E)}$
to denote the \emph{operator norm} of $T$.

Now we claim that there exists a positive, bounded and self-adjoint
operator $A$ such that $\wz{M}=2\|\wz{M}\|_{\cl(\ell^2(\scg_k))}
(Id-A)$, where $Id$ denotes the \emph{identity operator} on $\ell^2(\scg_k)$,
and
$$
\|A\|_{\cl(\ell^2(\scg_k))}
\le1-\frac{a}{2\|\wz M\|_{\cl(\ell^2(\scg_k))}}<1.
$$
Indeed, let $g(t):=1-\frac{t}{2\|\wz M\|_{\cl(\ell^2(\scg_k))}}$
for all $t\in\sz(\wz{M})$ and $A:=g(\wz{M})$.
From \cite[Theorem VII.1(e),\,(g)]{rs80}, we deduce that
\begin{eqnarray*}
\sz(A)&&=\sz\lf(g\lf(\wz{M}\r)\r)
=\lf\{g(t):\ t\in\sz\lf(\wz{M}\r)\r\}
\st\lf\{g(t):\ t\in[a,b]\r\}\\
&&=\lf[1-\frac{b}{2\|\wz M\|_{\cl(\ell^2(\scg_k))}},
1-\frac{a}{2\|\wz M\|_{\cl(\ell^2(\scg_k))}}\r]
\end{eqnarray*}
and
\begin{eqnarray*}
\|A\|_{\cl(\ell^2(\scg_k))}=\lf\|g\lf(\wz{M}\r)\r\|_{\cl(\ell^2(\scg_k))}
=\|g\|_{L^{\fz}(\sz(\wz{M}))}\le1-\frac{a}{2\|\wz M\|_{\cl(\ell^2(\scg_k))}}
<1,
\end{eqnarray*}
which show the above claim.

Thus, $(Id-A)^{-1/2}=\sum_{n=0}^\fz p_n A^n$, where $0<p_n\ls n^{1/2}$
for all $n\in\zz_+:=\nn\cup\{0\}$.
Observe that, for all $\bz\in\scg_k$,
$$
A(\bz,\bz)=1-\frac{\wz{M}(\bz,\bz)}
{2\|\wz M\|_{\cl(\ell^2(\scg_k))}}
=1-\frac{(\wz{M}\dz_\bz^{k+1},\dz_\bz^{k+1})_{\ell^2(\scg_k)}}
{2\|\wz M\|_{\cl(\ell^2(\scg_k))}}\ge1-1/2=1/2
$$
with $\{\dz^{k+1}_\bz\}_{\bz\in\scg_k}$ being the
canonical orthonormal basis of $\ell^2(\scg_k)$.

By this, \cite[Theorem VII.1(e),\,(g)]{rs80}, and the fact
that $\wz M$ is bounded uniformly on $k$, we conclude that,
for all $(k,\,\az)\in\wz{\sca}$ and $\bz\in\wz{L}(k,\,\az)$,
\begin{eqnarray*}
\wz{M}^{-1/2}(\bz,\bz)
&&=\lf(2\lf\|\wz M\r\|_{\cl(\ell^2(\scg_k))}\r)^{-1/2}(Id-A)^{-1/2}(\bz,\bz)\\
&&=\lf(2\lf\|\wz M\r\|_{\cl(\ell^2(\scg_k))}\r)^{-1/2}
\sum_{n=0}^\fz p_n A^n(\bz,\bz)\\
&&\ge\lf(2\lf\|\wz M\r\|_{\cl(\ell^2(\scg_k))}\r)^{-1/2} p_1 A(\bz,\bz)\gs1,
\end{eqnarray*}
where we used the fact that, if the infinite matrix
$A^n$ ($n\in\zz_+$) is positive,
then the diagonal entries
$A^n(\bz,\bz)=(A^n\dz^{k+1}_\bz,\dz^{k+1}_\bz)_{\ell^2(\scg_k)}\ge0$.
This finishes the proof of \eqref{b.n}.

Then we turn to estimate $\psi^k_\bz$ for all $k\in\zz$
and $\bz\in\scg_k$, where $\psi^k_\bz$ is as in \eqref{2.17x2}
with $\az$ replaced by $\bz$.
From the definition of $\psi^k_\bz$ in \eqref{2.17x2},
\eqref{b.y} and \eqref{b.n},
it follows that
\begin{eqnarray}\label{2.23x1}
\lf|\psi^k_\bz\lf(y^k_\bz\r)\r|&&=
\lf|\sum_{\gz\in\scg_k}\wz{M}^{-1/2}(\bz,\gz)
\frac{s^{k+1}_\gz(y^k_\bz)}{\sqrt{\mu^{k+1}_\gz}}\r|
=\frac{|\wz{M}^{-1/2}(\bz,\bz)|}{\sqrt{\mu^{k+1}_{\bz}}}
\ge \frac{c_3}{\sqrt{\mu^{k+1}_{\bz}}},
\end{eqnarray}
where $\mu^{k+1}_\bz:=V(x^{k+1}_\bz,\dz^{k+1})$.

Moreover, let $\ez_0\in(0,1)$ be a constant which
will be determined later. Recall $y^k_\bz:=x^{k+1}_\bz$
for all $k\in\zz$ and $\bz\in\scg_k$.
By \eqref{b.b}, we know that,
if $x\in B(y^k_\bz,\ez_0\dz^k)\st Q^k_{\az}$
(provided that $\ez_0$ is small enough), then
there exists a positive constant $\wz{C}$ such that, for all
$x\in B(y^k_\bz,\ez_0\dz^k)$,
$$
\lf|\psi^k_\bz(x)-\psi^k_\bz\lf(y^k_\bz\r)\r|
\le \wz{C}\frac1{\sqrt{\mu^{k+1}_\bz}}\ez^{\eta}_0
e^{-\nu\dz^{-k}d(y^k_\bz,x)}
\le\frac{\wz{C}\ez^{\eta}_0}{\sqrt{\mu^{k+1}_\bz}},
$$
which, combined with \eqref{2.23x1}, further implies that,
if we choose $\ez_0$ small enough, then, for all
$x\in B(y^k_\bz,\ez_0\dz^k)$,
\begin{equation}\label{b.o}
\lf|\psi^k_\bz(x)\r|\ge\lf|\psi^k_\bz\lf(y^k_\bz\r)\r|
-\lf|\psi^k_\bz(x)-\psi^k_\bz\lf(y^k_\bz\r)\r|
\ge\frac{c_3-\wz{C}\ez^{\eta}_0}{\sqrt{\mu^{k+1}_\bz}}
\ge\frac{c_3}{2\sqrt{\mu^{k+1}_\bz}}\gls\frac1{\sqrt{\mu^{k+1}_\bz}}.
\end{equation}

Now we are ready to estimate $\psi^k_{\az,\,\bz}$
for all $(k,\az)\in\wz{\sca}$ and
$\bz\in\wz{L}(k,\az)$.
By $y^k_\bz:=x^{k+1}_\bz$, $d(x_\bz^{k+1},x^k_\az)<2\dz^k$
[see Remark \ref{rb.d}(i)] and $B(x^k_\az,\frac13\dz^k)\st Q^k_\az$
[see Theorem \ref{tb.c}(iv)], we have
$$
\mu^{k+1}_\bz\le V\lf(x^{k+1}_\bz,\dz^k\r)
\le V\lf(x^k_\az,3\dz^k\r)
\ls V\lf(x^k_\az,\frac13\dz^k\r)
\ls\mu\lf(Q^k_\az\r).
$$
This, together with \eqref{b.o}, then finishes
the proof of Theorem \ref{tb.m}.
\end{proof}

\begin{remark}\label{rb.z}
Let $(k,\az)\in\wz{\sca}$ with $\wz{\sca}$ as in \eqref{2.19x1},
$\bz\in L(k,\az)$ with $L(k,\az)$ as in \eqref{b.k}, and
$B(y^k_\bz,\ez_0\dz^k)$
be as in Theorem \ref{tb.m}. Now we claim that
$$
V\lf(y^k_\bz,\ez_0\dz^k\r)\sim\mu\lf(Q^k_\az\r).
$$
Indeed, from Remark \ref{rb.d}(i), $(k+1,\bz)\le(k,\az)$,
\eqref{a.b} and $B(y^k_\bz,\ez_0\dz^k)\st Q^k_\az$, it follows that
\begin{eqnarray*}
\mu\lf(Q^k_\az\r)&&\le V\lf(x^k_\az,4\dz^k\r)
\le V\lf(y^{k}_{\bz},6\dz^k\r)\\
&&\ls V\lf(y^{k}_{\bz},\ez_0\dz^{k+1}\r)
\ls V\lf(y^{k}_{\bz},\ez_0\dz^{k}\r)
\ls\mu\lf(Q^k_\az\r),
\end{eqnarray*}
which shows the above claim.
\end{remark}

\section{An Unconditional Basis of $\hona$}\label{s3}

\hskip\parindent In this section, we obtain an unconditional basis of $\hona$.
Now we first recall the following notion of Hardy spaces $\hona$,
which was introduced in \cite{cw77}.

\begin{definition}\label{dc.k}
Let $q\in(1, \fz]$. A function $a$ on
$\cx$ is called  a {\it $(1, q)$-atom} if

(i) $\supp(a)\subset B$ for some ball $B\subset\cx$;

(ii) $\|a\|_{L^q(\cx)}\le [\mu(B)]^{1/q-1}$;

(iii) $\int_\cx a(x)\, d\mu(x)=0$.

A function $f\in L^1(\cx)$ is said to be in the {\it Hardy space
$H_{\rm at}^{1,\,q}(\cx)$} if there exist
$(1,q)$-atoms $\{a_j\}_{j=1}^\fz$ and numbers
$\{\lz_j\}_{j=1}^\fz\subset\cc$ such that
\begin{equation}\label{c.z}
f=\sum_{j=1}^\fz\lz_j a_j,
\end{equation}
which converges in $L^1(\cx)$, and
$$\sum_{j=1}^\fz|\lz_j|<\fz.$$
Moreover, the norm of $f$ in $H_{\rm at}^{1,\,q}(\cx)$
is defined by setting
$$
\|f\|_{H_{\rm at}^{1,\,q}(\cx)}
:=\inf\lf\{\sum_{j\in\nn}|\lz_j|\r\},
$$
where the infimum is taken over all possible decompositions of
$f$ as in \eqref{c.z}.
\end{definition}

Coifman and Weiss \cite{cw77}
proved that $H_{\rm at}^{1,\,q}(\cx)$ and
$H^{1,\,\fz}_{\rm at}(\cx)$ coincide with equivalent norms for all
different $q\in(1, \fz)$.
Thus, from now on, we denote $H^{1,\,q}_{\rm at}(\cx)$
simply by $H^1_{\rm at}(\cx)$.

\begin{remark}\label{rc.i}
It was shown in \cite{cw77} that $\hona$ is a Banach space
which is the predual of $\mathop\mathrm{BMO}(\cx)$.
\end{remark}

We then recall the molecular characterization of $\hona$ from
\cite{hyz}, which plays important roles in establishing
equivalent characterizations of $\hona$ via wavelets, since it
partially compensates the defect of the
regular wavelets without bounded supports.

The following notions of $(1,q,\eta)$-molecules are from \cite{hyz}.

\begin{definition}\label{dc.z}
Let $q\in(1, \fz]$ and $\{\eta_k\}_{k\in\nn}\st[0,\fz)$ satisfy
\begin{equation}\label{c.w}
\sum_{k\in\nn}k\eta_k<\fz.
\end{equation}

A function $m\in\lq$ is called a \emph{$(1,q,\eta)$-molecule centered
at a ball $B:=B(x_0,r)$}, for some $x_0\in\cx$ and $r\in(0,\fz)$, if

(M1) $\|m\|_{\lq}\le[\mu(B)]^{1/q-1}$;

(M2) for all $k\in\nn$,
$$
\lf\|m\chi_{B(x_0,2^kr)\bh B(x_0,2^{k-1}r)}\r\|_{\lq}
\le\eta_k2^{k(1/q-1)}[\mu(B)]^{1/q-1};
$$

(M3) $\int_\cx m(x)\,d\mu(x)=0$.
\end{definition}

Then the following molecular characterization of the space $\hona$
is a slight variant of \cite[Theorem 2.2]{hyz} which is originally related
to the quasi-metric $\rho$ as in \eqref{a.x}
and is obviously true with $\rho$ replaced by $d$.

\begin{theorem}\label{tc.y}
Suppose that $(\cx,d,\mu)$ is a metric measure space of homogeneous type.
Let $q\in(1, \fz]$ and $\eta=\{\eta_k\}_{k\in\nn}\st[0,\fz)$
satisfy \eqref{c.w}.
Then there exists a positive constant
$C$ such that, for any $(1,q,\eta)$-molecule $m$, $m\in\hona$
and
$$\|m\|_{\hona}\le C.$$
Moreover, $f\in\hona$ if and only if there exist
$(1,q,\eta)$-molecules $\{m_j\}_{j\in\nn}$ and numbers
$\{\lz_j\}_{j\in\nn}\st\cc$ such that
$$
f=\sum_{j\in\nn}\lz_jm_j,
$$
which converges in $\lon$. Furthermore,
$$
\|f\|_{\hona}\sim\inf\lf\{\sum_{j\in\nn}|\lz_j|\r\},
$$
where the infimum is taken over all the decompositions of $f$
as above and the equivalent positive constants
are independent of $f$.
\end{theorem}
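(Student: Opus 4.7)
The plan is to prove the theorem by carrying out the molecule-to-atoms argument of \cite[Theorem 2.2]{hyz}, which was written for the measure quasi-metric $\rho$ defined in \eqref{a.x}, and to observe that the same argument works verbatim with the metric $d$ in place of $\rho$. The claim splits into two halves: (a) every $(1,q,\eta)$-molecule $m$ lies in $\hona$ with $\|m\|_{\hona}\le C$, and (b) every $f\in\hona$ admits a molecular decomposition whose coefficient sum is equivalent to $\|f\|_{\hona}$.

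Half (b) is the easy direction: every $(1,q)$-atom is automatically a $(1,q,\eta)$-molecule with $\eta_0\le 1$ and $\eta_k=0$ for $k\ge 1$, a sequence trivially satisfying \eqref{c.w}, so any atomic decomposition of $f$ is already a molecular one and gives $\|f\|_{\rm mol}\ls\|f\|_{\hona}$ immediately. The substantive content is in (a).

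For (a), I would decompose $m$, centered at a ball $B:=B(x_0,r)$, along the dyadic annuli $U_0:=B$ and $U_k:=B(x_0,2^kr)\bh B(x_0,2^{k-1}r)$ for $k\ge 1$, writing $B_k:=B(x_0,2^kr)$, $m_k:=m\chi_{U_k}$, and $M_k:=\int_{\cx}m_k\,d\mu$. The vanishing moment (M3) gives $\sum_k M_k=0$, so the partial sums $N_k:=\sum_{j\le k}M_j=-\sum_{j>k}M_j$ tend to $0$. Each atomic piece $a_k$ is then obtained from $m_k$ by subtracting a multiple of $\chi_{U_k}$ to restore zero mean, plus a telescoping correction proportional to $N_k\chi_{B_k}-N_{k-1}\chi_{B_{k-1}}$ that links adjacent scales; after normalizing by a scalar $\lz_k$, each $a_k$ becomes a genuine CW $(1,q)$-atom on $B_k$, and $m=\sum_k\lz_k a_k$ converges in $\lon$ via the vanishing of $N_k$ at infinity (the tail estimate uses (M2) and H\"older's inequality applied to the uniformly summable annular $L^1$-norms).

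The key estimate is $\sum_k|\lz_k|<\fz$. Using (M2) and H\"older's inequality, the contribution of the annular part of $\lz_k$ is bounded by $\eta_k$, while the telescoping correction is controlled by $|N_k|\ls\sum_{j>k}\eta_j$. Summing then yields
$$
\sum_k|\lz_k|\ls\sum_k\eta_k+\sum_k\sum_{j>k}\eta_j=\sum_k\eta_k+\sum_j j\,\eta_j<\fz
$$
by the hypothesis \eqref{c.w}. The main obstacle is the careful bookkeeping in which the $L^q$-normalization $[\mu(B)]^{1/q-1}$ appearing in (M2) must be reconciled with the CW atomic normalization $[\mu(B_k)]^{1/q-1}$ on $B_k$ via the doubling inequality \eqref{a.b}; this is where the factor $2^{k(1/q-1)}$ in (M2) does the essential work, mirroring the identity $\mu(B_\rho(x,r))\sim r$ that makes the argument transparent in the $\rho$-setting of \cite{hyz}.
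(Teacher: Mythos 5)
The paper itself gives no proof of Theorem \ref{tc.y}: it cites \cite[Theorem 2.2]{hyz} (proved for the normal quasi-metric $\rho$ of \eqref{a.x}) and asserts the result ``is obviously true with $\rho$ replaced by $d$.'' Your route is therefore the intended one, and your skeleton --- atoms are molecules with $\eta\equiv 0$ for the easy half, and for the hard half the annular decomposition $m_k:=m\chi_{U_k}$ with means subtracted plus an Abel-summed telescoping correction --- is the correct standard machine. The genuine gap is the one estimate you describe instead of carrying out, namely $\sum_k|\lz_k|\ls\sum_k\eta_k+\sum_j j\eta_j$. Your closing sentence leans on ``the identity $\mu(B_\rho(x,r))\sim r$,'' but that is a property of the \emph{normal} quasi-metric $\rho$ and has no analogue for $d$. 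Concretely, renormalizing $m_k$ to a $(1,q)$-atom on $B_k:=B(x_0,2^kr)$ and using (M2) gives
$$
|\lz_k|\sim\lf\|m_k\r\|_{L^q(\cx)}\lf[\mu(B_k)\r]^{1-1/q}
\le\eta_k\,2^{k(1/q-1)}\lf[\frac{\mu(B_k)}{\mu(B)}\r]^{1-1/q},
$$
and doubling \eqref{a.b} only yields $\mu(B_k)\le\wz{C}_{(\cx)}2^{kn}\mu(B)$, so the best general bound is $|\lz_k|\ls\eta_k\,2^{k(n-1)(1-1/q)}$; the same exponentially growing factor enters $|M_j|$ and hence $|N_k|$. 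When $n>1$ this is not absorbed by the hypothesis $\sum_k k\eta_k<\fz$, so the claimed bound $|\lz_k|\ls\eta_k$ is false in general, and the ``reconciliation'' you defer is exactly where the argument breaks.

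This is not merely a defect of your write-up: with (M2) normalized by $2^{k(1/q-1)}[\mu(B)]^{1/q-1}$ and only \eqref{c.w} assumed, the uniform bound $\|m\|_{\hona}\le C$ already fails at the level of $\|m\|_{\lon}$. On $\rr^2$ with $q=\fz$, $B=B(0,1)$, take $m:=\sum_{k=1}^{K}\eta_k2^{-k}[\mu(B)]^{-1}(\chi_{A_k^+}-\chi_{A_k^-})$ with $A_k^{\pm}$ disjoint halves of the annulus $U_k$; this satisfies (M1)--(M3) for $\eta_k:=k^{-3}$ (so $\sum_kk\eta_k<\fz$), yet $\|m\|_{\lon}\sim\sum_{k=1}^{K}\eta_k2^{k}\to\fz$ as $K\to\fz$, while $\|\cdot\|_{\lon}\le\|\cdot\|_{\hona}$. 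The repair is either to restate (M2) with $[\mu(B(x_0,2^kr))]^{1/q-1}$ on the right-hand side (the correct $d$-analogue of the $\rho$-normalization), or to strengthen \eqref{c.w} to $\sum_kk\eta_k[\mu(B(x_0,2^kr))/(2^k\mu(B))]^{1-1/q}<\fz$; with either change your decomposition closes exactly as you outline. Note that every application in this paper (Lemma \ref{lc.a}, the claim \eqref{3.7x}, Lemma \ref{lc.i}) uses $\eta_\ell$ decaying super-exponentially, and in \eqref{3.7x} the authors in fact verify $\sum_\ell\ell\,2^{n\ell/2}\eta_\ell<\fz$, i.e.\ the stronger condition; so the theorem \emph{as used} is sound, but a proof written to the stated hypotheses must confront this normalization issue rather than appeal to $\mu(B_\rho(x,r))\sim r$.
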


In order to show that $\{\psi_{\az,\,\bz}^k\}_{(k,\,\az)\in\wz{\sca},\,
\bz\in\wz{L}(k,\,\az)}$
is an unconditional basis of $\hona$, we need some
notions and basic properties
of the unconditional convergence and the unconditional basis from
\cite{lt77,w97}.

\begin{definition}\label{dc.l}
(i) Let $A$ be some countable index set and
 $\{x_n\}_{n\in A}$ a countable family of vectors
in a Banach space $\cb$.
The series $\sum_{n\in A}x_n$ is
said to be \emph{unconditionally convergent}
if, for each permutation $\sz:\ \nn\to A$, namely, a bijection,
the series $\sum_{k=0}^\fz x_{\sz(k)}$ still converges in $\cb$.

(ii) A countable family $\{x_n\}_{n\in A}$ of vectors
in a Banach space $\cb$
is called an \emph{unconditional basis} if, for any $x\in\cb$,
there exists a unique sequence of scalars, $\{\lz_n\}_{n\in A}\st\cc$,
such that
$$
x=\sum_{n\in A}\lz_n x_n\quad {\rm in}\quad \cb
$$
and the expansion $\sum_{n\in A}\lz_n x_n$ of $x$
converges unconditionally.
\end{definition}

\begin{remark}\label{r3.3x}
It was shown in \cite[Proposition 1.c.1]{lt77} that
$\{x_n\}_{n\in A}$ is an unconditional basis
of a Banach space $\cb$ if and only if, for any sequence
$\{\ez_n\}_{n\in A}\st\{-1,1\}$, $\sum_{n\in A}\ez_n x_n$ converges in $\cb$.
\end{remark}

The following useful lemma is a variant
of \cite[Proposition 8.8]{w97} on Euclidean spaces.

\begin{lemma}\label{lc.a}
Suppose that $(\cx,d,\mu)$ is a metric measure space of homogeneous type.
Let $a$ be a $(1,\fz)$-atom.
Then $\sum_{(k,\,\az)\in\wz{\sca},\,
\bz\in\wz{L}(k,\,\az)}(a,\psi_{\az,\,\bz}^k)\psi_{\az,\,\bz}^k$
converges unconditionally in $\hona$. Moreover,
there exists a positive constant $C$, independent of $a$, such that, for all
subsets $\cs\st\{(k,\az,\bz):\ (k,\az)\in\wz{\sca},\,
\bz\in\wz{L}(k,\az)\}=:\mathscr{I}$
with $\wz{\sca}$ and $\wz{L}(k,\az)$ being, respectively, as in
\eqref{2.19x1} and \eqref{2.19x2},
\begin{equation}\label{3.2x}
\lf\|\sum_{(k,\,\az,\,\bz)\in\cs}\lf(a,\psi_{\az,\,\bz}^k\r)
\psi_{\az,\,\bz}^k\r\|_{\hona}\le C.
\end{equation}
\end{lemma}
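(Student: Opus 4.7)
Let $a$ be a $(1,\fz)$-atom with $\supp a\subset B_0:=B(x_0,r_0)$ and $\|a\|_{\li}\le[\mu(B_0)]^{-1}$; for any $\cs\st\mathscr{I}$ set
$$f_{\cs}:=\sum_{(k,\,\az,\,\bz)\in\cs}\lf(a,\psi_{\az,\,\bz}^k\r)\psi_{\az,\,\bz}^k.$$
My plan is to prove (\ref{3.2x}) by exhibiting $f_{\cs}$ as a fixed constant multiple of a $(1,2,\eta)$-molecule (in the sense of Definition \ref{dc.z}) centered at $B_0$ with molecular constants $\{\eta_j\}_{j\in\nn}$ independent of $\cs$ and $a$; the molecular characterization in Theorem \ref{tc.y} then yields (\ref{3.2x}). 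Unconditional convergence will follow from (\ref{3.2x}) by the standard principle recalled in Remark \ref{r3.3x}---it reduces to a uniform bound over partial sums with arbitrary sign choices, which is exactly (\ref{3.2x})---together with the fact that $\{\psi_{\az,\,\bz}^k\}$ is an orthonormal basis of $\ltw$ by Theorem \ref{tb.a}, ensuring $\ltw$-convergence of the full series to $a$.

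\textbf{Conditions (M1) and (M3).} Both are essentially free. Bessel's inequality for the orthonormal wavelet basis gives
$$\|f_{\cs}\|_{\ltw}^2=\sum_{(k,\,\az,\,\bz)\in\cs}\lf|\lf(a,\psi_{\az,\,\bz}^k\r)\r|^2\le\|a\|_{\ltw}^2\le[\mu(B_0)]^{-1},$$
which is exactly (M1) with $q=2$. For (M3), each $\psi_{\az,\,\bz}^k$ has vanishing integral by (\ref{b.c}); passing this property through the $\ltw$-limit via $\int_\cx f_{\cs}\,d\mu=\lim_{R\to\fz}\int_\cx f_{\cs}\chi_{B(x_0,R)}\,d\mu$ and using the $\lon$-integrability of $f_{\cs}$ (supplied by the molecular tails from (M2) below) then yields $\int_\cx f_{\cs}\,d\mu=0$.

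\textbf{Condition (M2): the main step.} With $A_j:=B(x_0,2^jr_0)\setminus B(x_0,2^{j-1}r_0)$, I aim to prove
$$\|f_{\cs}\chi_{A_j}\|_{\ltw}\ls\eta_j 2^{-j/2}[\mu(B_0)]^{-1/2}\quad \text{with}\quad \sum_{j\in\nn}j\eta_j<\fz,$$
uniformly in $\cs$ and $a$. I split the wavelet indices by scale into coarse scales ($\dz^k\ge r_0$) and fine scales ($\dz^k<r_0$). On the coarse side, the cancellation $\int_\cx a\,d\mu=0$ combined with the H\"older regularity (\ref{b.b}) applied to $\psi_{\az,\,\bz}^k(y)-\psi_{\az,\,\bz}^k(x_0)$ (valid since $d(y,x_0)\le r_0\le\dz^k$ for $y\in B_0$) yields
$$\lf|\lf(a,\psi_{\az,\,\bz}^k\r)\r|\ls\frac{(r_0/\dz^k)^\eta}{\sqrt{V(x_\bz^{k+1},\dz^k)}}\,e^{-\nu\dz^{-k}d(x_\bz^{k+1},x_0)}.$$
Pairing this with the pointwise decay (\ref{b.a}) of $\psi_{\az,\,\bz}^k$ restricted to $A_j$, summing over $\bz$ via Lemma \ref{lb.x} applied to the $\dz^k$-separated grid $\{x_\bz^{k+1}\}_\bz$, and then summing the resulting geometric series in $k$, I obtain $\eta_j\ls 2^{-j\eta'}$ for some $\eta'\in(0,\eta)$. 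On fine scales, the bound $\|a\|_{\li}\le[\mu(B_0)]^{-1}$ replaces cancellation, giving $|(a,\psi_{\az,\,\bz}^k)|\ls[\mu(B_0)]^{-1}\|\psi_{\az,\,\bz}^k\|_{L^1(B_0)}$, and the exponential factor in (\ref{b.a}) is harvested twice---once in the coefficient (when $x_\bz^{k+1}$ lies far from $B_0$) and once in the pointwise evaluation on $A_j$. Both regimes yield geometric decay in $j$ with constants depending only on $\eta$, $\nu$ and the doubling constant, so $\sum_{j\in\nn}j\eta_j<\fz$.

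\textbf{Conclusion and main obstacle.} Theorem \ref{tc.y} now delivers $\|f_{\cs}\|_{\hona}\ls 1$ uniformly in $\cs$ and $a$, proving (\ref{3.2x}). Combined with $\ltw$-convergence of the full wavelet expansion to $a$ (Theorem \ref{tb.a}) and Remark \ref{r3.3x}, this yields unconditional convergence of the wavelet series in $\hona$. The principal technical obstacle will be the off-diagonal estimate in (M2), in particular the delicate interplay between the exponential wavelet decay, the H\"older-regularity gain, and the absence of translation invariance on $(\cx,d,\mu)$; here Lemma \ref{lb.x} is the essential substitute for Euclidean summation over lattice translates and is what makes the Schur-type sums over $\{x_\bz^{k+1}\}_\bz$ converge uniformly.
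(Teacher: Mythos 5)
Your overall strategy (reduce to the molecular characterization, Theorem \ref{tc.y}) is the right one, but packaging the \emph{whole} sum $f_{\cs}$ as a single $(1,2,\eta)$-molecule centered at the atom's ball $B_0$ breaks down at the coarse scales, and this is a genuine gap. Fix $j\in\nn$ and consider the scales $\dz^k\ge 2^jr_0$. There the exponential factors give no decay over the annulus $A_j$, so the only gain is the H\"older factor $(r_0/\dz^k)^\eta\le 2^{-j\eta}$; carrying out your estimate one gets, on $A_j$, a pointwise bound of order $2^{-j\eta}[V(x,2^jr_0)]^{-1}$ and hence $\|f_{\cs}\chi_{A_j}\|_{\ltw}\ls 2^{-j\eta}[\mu(2^jB_0)]^{-1/2}$. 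Condition (M2) demands $\eta_j2^{-j/2}[\mu(B_0)]^{-1/2}$, and since no reverse doubling is assumed one only has $\mu(2^jB_0)\ge\mu(B_0)$; the best admissible constant is therefore $\eta_j\sim 2^{j(1/2-\eta)}$, for which $\sum_jj\eta_j=\fz$ unless $\eta>1/2$. The Auscher--Hyt\"onen exponent $\eta$ in \eqref{b.1}--\eqref{b.b} is not guaranteed to exceed $1/2$ (it is typically small), so your claimed bound $\eta_j\ls2^{-j\eta'}$ for the coarse part overlooks the extra factor $2^{-j/2}$ built into the normalization of (M2). The paper avoids this precisely by \emph{not} forming one molecule: it splits $\sci$ into $\ca$ (coarse scales), $\cb$ (fine scales with $x^{k+1}_\bz\notin2B_0$) and $\ccc$ (fine scales with $x^{k+1}_\bz\in2B_0$), shows that each normalized wavelet $\psi^k_{\az,\,\bz}/\sqrt{V(x^{k+1}_\bz,\dz^k)}$ is itself a molecule centered at its \emph{own} ball (which is the correct center for it), and bounds $\bsz_\ca$ and $\bsz_\cb$ by \emph{absolute} convergence, $\sum|(a,\psi^k_{\az,\,\bz})|\,\|\psi^k_{\az,\,\bz}\|_{\hona}\ls1$; only the local fine-scale piece $\bsz_\ccc$ is treated as a single molecule, centered at $4B_0$.

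A second, smaller issue: the uniform bound \eqref{3.2x} over all subsets $\cs$ does not by itself give unconditional convergence, since uniform boundedness of subset sums is strictly weaker than the Cauchy condition for the net of finite partial sums (compare the standard unit vector basis of $c_0$). You also need the tails to vanish in $\hona$. In the paper this is automatic on $\ca\cup\cb$ from absolute convergence, while on $\ccc$ the molecule is normalized by $\mu_M:=\|\bsz^M_\ccc\|_{\ltw}[\mu(4B_0)]^{1/2}$, and Bessel's inequality forces $\mu_M\to0$ as $M\to\fz$; this quantitative decay is what upgrades boundedness to convergence, and your write-up should make the analogous normalization explicit.
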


\begin{proof}
Let $a$ be a $(1,\fz)$-atom supported in the ball
$B:=B(c_B,r_B)$ with $c_B\in\cx$ and $r_B\in(0,\fz)$,
and $N\in\zz$ satisfy $\dz^{N+1}<r_{B}\le \dz^N$.
We first show that $\sum_{(k,\,\az,\,\bz)\in\mathscr{I}}
(a,\psi_{\az,\,\bz}^k)\psi_{\az,\,\bz}^k$
converges unconditionally in $\hona$ and
\eqref{3.2x} holds true for $\cs=\sci$.

Let $\ca:=\{(k,\,\az,\,\bz)\in\mathscr{I}:\ k\le N\}$,
$\cb:=\{(k,\,\az,\,\bz)\in\mathscr{I}:\ k>N,\ x_\bz^{k+1}\not\in2B\}$
and $\ccc:=\{(k,\,\az,\,\bz)\in\mathscr{I}:\ k>N,\ x_\bz^{k+1}\in2B\}$.
Then we write
\begin{eqnarray*}
\sum_{(k,\,\az,\,\bz)\in\mathscr{I}}
\lf(a,\psi_{\az,\,\bz}^k\r)\psi_{\az,\,\bz}^k
&&=\sum_{(k,\,\az,\,\bz)\in\ca}\lf(a,\psi_{\az,\,\bz}^k\r)\psi_{\az,\,\bz}^k
+\sum_{(k,\,\az,\,\bz)\in\cb}\cdots+\sum_{(k,\,\az,\,\bz)\in\ccc}\cdots\\
&&=:\bsz_{\ca}+\bsz_{\cb}+\bsz_{\ccc}.
\end{eqnarray*}

Let $(k,\az,\bz)\in\sci$.
We first claim that $\frac{\psi_{\az,\,\bz}^k}{\sqrt{V(x_\bz^{k+1},\,\dz^k)}}$
is a $(1,2,\eta)$-molecule multiplied by a positive constant
independent of $k$, $\az$ and $\bz$, where
$\eta:=\{\eta_\ell\}_{\ell=1}^\fz$ and
$$
\eta_\ell:=[2\wz{C}_{(\cx)}]^{\ell/2}\exp\{-\gz2^{\ell-1}\}
\ {\rm for\ any}\ \ell\in\nn
$$
with $\wz{C}_{(\cx)}$ as in \eqref{a.b}.

Indeed, by $\|\psi^k_{\az,\,\bz}\|_{\ltw}=1$
[since $\{\psi^k_{\az,\,\bz}\}_{(k,\,\az,\,\bz)\in\sci}$ is an
orthonormal basis of $\ltw$ (see Theorem \ref{tb.a})], we find that
\begin{equation}\label{c.a}
\lf\|\frac{\psi_{\az,\,\bz}^k}{\sqrt{V(x_\bz^{k+1},\dz^k)}}\r\|_{\ltw}
=\lf[V\lf(x_\bz^{k+1},\dz^k\r)\r]^{-1/2}.
\end{equation}

On the other hand, by \eqref{b.a} and \eqref{a.b}, we know that,
for any $\ell\in\nn$,
\begin{eqnarray*}
&&\lf\|\frac{\psi_{\az,\,\bz}^k}{\sqrt{V(x_\bz^{k+1},\dz^k)}}
\chi_{B(x_\bz^{k+1},\,2^\ell\dz^k)\bh
B(x_\bz^{k+1},\,2^{\ell-1}\dz^k)}\r\|_{\ltw}\\
&&\hs\ls\frac1{V(x_\bz^{k+1},\dz^k)}
\lf\{\int_{B(x_\bz^{k+1},\,2^\ell\dz^k)\bh
B(x_\bz^{k+1},\,2^{\ell-1}\dz^k)}e^{-2\nu\dz^{-k}d(x_\bz^{k+1},\,x)}
\,d\mu(x)\r\}^{1/2}\\
&&\hs\ls\frac1{V(x_\bz^{k+1},\dz^k)}e^{-\nu2^{\ell-1}}
\lf[V\lf(x_\bz^{k+1},2^\ell\dz^k\r)\r]^{1/2}\\
&&\hs\ls e^{-\nu2^{\ell-1}}\lf[\wz{C}_{(\cx)}\r]^{\ell/2}
\lf[V\lf(x_\bz^{k+1},\dz^k\r)\r]^{-1/2}
\sim\eta_\ell2^{-\ell/2}\lf[V\lf(x_\bz^{k+1},\dz^k\r)\r]^{-1/2}.
\end{eqnarray*}
This, combined with \eqref{c.a} and
$\sum_{\ell=1}^\fz\ell\eta_\ell<\fz$, implies the above claim.
Moreover, by this claim and Theorem \ref{tc.y}, we conclude that,
for all $(k,\az,\bz)\in\sci$,
$$
\|\psi_{\az,\,\bz}^k\|_{\hona}\ls\sqrt{V(x_\bz^{k+1},\dz^k)},
$$
where the implicit positive constant is independent of
$k$, $\az$ and $\bz$.

In order to estimate $\bsz_\ca$, we first control
$|(a,\psi_{\az,\,\bz}^k)|$ for all $(k,\az,\bz)\in\ca$.
From the vanishing moment of $a$,
\eqref{b.b} and $r_{B}\le\dz^N\le\dz^k$, we deduce that
\begin{eqnarray*}
\lf|\lf(a,\psi_{\az,\,\bz}^k\r)\r|
&&=\lf|\int_{B}a(x)\ov{[\psi_{\az,\,\bz}^k(x)
-\psi_{\az,\,\bz}^k(c_{B})]}\,d\mu(x)\r|\\
&&\le\|a\|_{\li}\int_{B}\lf|\psi_{\az,\,\bz}^k(x)-\psi_{\az,\,\bz}^k(c_{B})\r|
\,d\mu(x)\\
&&\ls\frac1{\mu(B)}\frac1{\sqrt{V(x_\bz^{k+1},\dz^k)}}
\int_{B}\lf[\frac{r_{B}}{\dz^k}\r]^\eta
e^{-\nu\dz^{-k}d(x_\bz^{k+1},\,x)}\,d\mu(x),
\end{eqnarray*}
which, combined with the above claim, Theorem \ref{tc.y},
Lemma \ref{lb.x}, \eqref{a.b} and $\eta\in(0,1]$, implies that
\begin{eqnarray}\label{c.r}
&&\sum_{(k,\,\az,\,\bz)\in\ca}\lf|(a,\psi_{\az,\,\bz}^k)\r|
\lf\|\psi_{\az,\,\bz}^k\r\|_{\hona}\\
&&\noz\hs\ls\sum_{(k,\,\az,\,\bz)\in\ca}
\sqrt{V\lf(x_\bz^{k+1},\dz^k\r)}\lf|(a,\psi_{\az,\,\bz}^k)\r|\\
&&\noz\hs\ls\frac1{\mu(B)}\int_{B}\sum_{\{k\in\zz:\ \dz^k\ge r_{B}\}}
\lf[\frac{r_{B}}{\dz^k}\r]^\eta\sum_{\{\az\in\sca_k,\,
\bz\in\wz{L}(k,\,\az):\ \#L(k,\,\az)>1\}}
e^{-\nu\dz^{-k}d(x_\bz^{k+1},\,x)}\,d\mu(x)\\
&&\noz\hs\ls\frac1{\mu(B)}\int_{B}\sum_{\{k\in\zz:\ \dz^k\ge r_{B}\}}
\lf[\frac{r_{B}}{\dz^k}\r]^\eta
e^{-\nu\dz^{-k}d(x,\,\scy^k)/2}\,d\mu(x)\\
&&\noz\hs\ls\frac1{\mu(B)}\int_{B}\sum_{\{k\in\zz:\ \dz^k\ge r_{B}\}}
\lf[\frac{r_{B}}{\dz^k}\r]^\eta\,d\mu(x)\ls1,
\end{eqnarray}
where, for any $k\in\zz$, $\sca_k$, $L(k,\az)$ and $\wz{L}(k,\az)$
are, respectively, as in \eqref{b.v}, \eqref{b.k} and \eqref{2.19x2},
$
\scy^k:=\scx^{k+1}\bh\scx^k
$
with $\scx^k$ as in \eqref{2.1x}, and
the implicit positive constant is independent of $a$.
Thus, by this and the completion of $\hona$
[see Remark \ref{rc.i}], we know that
$\bsz_{\ca}$ converges unconditionally in $\hona$ and
$\|\bsz_{\ca}\|_{\hona}\ls1$.

Then we estimate $\bsz_\cb$. By the above claim,
Theorem \ref{tc.y}, the size condition of $a$,
\eqref{b.a}, $d(x,x_\bz^{k+1})\ge
\frac12d(x_\bz^{k+1},c_{B})$ for $x\in B$
and $x_\bz^{k+1}\not\in 2B$, Lemma \ref{lb.x}
and $r_B>\dz^{N+1}$,
we conclude that
\begin{eqnarray}\label{c.s}
&&\sum_{(k,\,\az,\,\bz)\in\cb}\lf|(a,\psi_{\az,\,\bz}^k)\r|
\lf\|\psi_{\az,\,\bz}^k\r\|_{\hona}\\
&&\noz\hs\hs\hs\ls\sum_{(k,\,\az,\,\bz)\in\cb}
\sqrt{V\lf(x_\bz^{k+1},\dz^k\r)}\lf|\lf(a,\psi_{\az,\,\bz}^k\r)\r|\\
&&\noz\hs\hs\hs\ls\frac1{\mu(B)}\int_{B}\sum_{k=N+1}^\fz
\sum_{\{\az\in\sca_k,\,\bz\in\wz{L}(k,\,\az):\ x_\bz^{k+1}
\not\in2B,\,\#L(k,\,\az)>1\}}
e^{-\nu\dz^{-k}d(x_\bz^{k+1},\,x)}\,d\mu(x)\\
&&\noz\hs\hs\hs\ls\sum_{k=N+1}^\fz
\sum_{\{\az\in\sca_k,\,\bz\in\wz{L}(k,\,\az):\ x_\bz^{k+1}
\not\in2B,\,\#L(k,\,\az)>1\}}
e^{-2^{-1}\nu\dz^{-k}d(x_\bz^{k+1},\,c_{B})}\\
&&\noz\hs\hs\hs\ls\sum_{k=N+1}^\fz e^{-2^{-2}\nu\dz^{-k}
d(c_{B},\,\scy^k\bh 2B)}
\ls\sum_{k=N+1}^\fz e^{-\nu\dz^{-k}r_{B}}
\ls\sum_{k=N+1}^\fz e^{-\nu\dz^{N-k+1}}\ls1,
\end{eqnarray}
where the implicit positive constant is independent of $a$.
Thus, similar to $\bsz_\ca$, we know that
$\bsz_{\cb}$ converges unconditionally in $\hona$ and
$\|\bsz_{\cb}\|_{\hona}\ls1$.

Finally, we prove that $\bsz_\ccc$ unconditionally
converges in $\hona$. For any $M\in\zz\cap[N,\fz)$,
let
$$
\bsz_\ccc^M:=\sum_{\{k>M,\,\az\in\sca_k,\,
\bz\in\wz{L}(k,\,\az):\ x_\bz^{k+1}\in2B,\,\#L(k,\,\az)>1\}}\ez^k_{\az,\,\bz}
\lf(a,\psi_{\az,\,\bz}^k\r)\psi_{\az,\,\bz}^k,
$$
where $\ez^k_{\az,\,\bz}\in\{-1,1\}$
for any $k>M,\,\az\in\sca_k,\,\bz\in\wz{L}(k,\az)$ with
$x_\bz^{k+1}\in2B$ and $\#L(k,\,\az)>1$.
By Remark \ref{r3.3x}, it suffices to show that
$\|\bsz_\ccc^M\|_{\hona}\ls1$ for all $M\ge N$
and all choices of $\ez^k_{\az,\,\bz}\in\{-1,1\}$,
and $\|\bsz_\ccc^M\|_{\hona}\to0$ as $M\to\fz$.

Without loss of generality, we may assume that $\|\bsz_\ccc^M\|_{\ltw}>0$
for all $M\in\zz\cap[N,\fz)$. Otherwise, we only need to consider
all those $M\in\zz\cap[N,\fz)$ such that $\|\bsz^M_{\ccc}\|_{\ltw}>0$.
From Theorem \ref{tb.a} and Definition \ref{dc.k}(ii),
it follows that, for any $M\in\zz$,
\begin{equation}\label{c.4}
\lf\|\bsz^M_\ccc\r\|_{\ltw}\le\lf\|\bsz_\ccc\r\|_{\ltw}
\le\|a\|_{\ltw}\le[\mu(B)]^{-1/2}
\end{equation}
and $\|\bsz^M_\ccc\|_{\ltw}\to0$ as $M\to\fz$.

Let $\mu_M:=\|\bsz^M_\ccc\|_{\ltw}[\mu(4B)]^{1/2}$ for all
$M\in\zz\cap[N,\fz)$. Now we claim that
\begin{equation}\label{3.7x}
\wz{\bsz}^M_\ccc:=\bsz^M_\ccc/\mu_M\quad \mathrm{is\ a}
\quad (1,2,\eta)-\mathrm{molecule},
\end{equation}
centered at ball $4B$, multiplied by some positive constant,
where $\eta:=\{\eta_\ell\}_{\ell=0}^\fz\st[0,\fz)$
and $\eta_\ell:=[2\wz{C}_{(\cx)}]^{\ell/2}2^{-(\ell+1)K_0}/\mu_M$
for some large positive
integer $K_0$ such that $K_0\ge G_0+n+1$, with $G_0$ and $n$
respectively as in Remark \ref{rb.l}(ii) and \eqref{a.b}, and
$$
\sum_{\ell=1}^\fz\ell2^{n\ell/2}\lf[2\wz{C}_{(\cx)}\r]^{\ell/2}
2^{-(\ell+1)K_0}/\mu_M<\fz.
$$
Obviously,
\begin{equation}\label{c.3}
\lf\|\wz{\bsz}^M_\ccc\r\|_{\ltw}
=\lf\|\bsz^M_\ccc\r\|_{\ltw}/\mu_M=[\mu(4B)]^{-1/2}.
\end{equation}

On the other hand, by \eqref{a.b},
we observe that, for any $r_0,\,\nu_0\in(0,\fz)$
and $x_0\in\cx$,
\begin{eqnarray}\label{3.9x1}
&&\int_{\cx}e^{-\nu_0 d(x,\,x_0)/r_0}\,d\mu(x)\\
&&\noz\hs\ls\int_{B(x_0,\,r_0)}e^{-\nu_0 d(x,\,x_0)/r_0}\,d\mu(x)
+\sum_{\ell=1}^\fz\int_{B(x_0,\,(\ell+1)r_0)
\bh B(x_0,\,\ell r_0)}\cdots\\
&&\noz\hs\ls V(x_0,r_0)+\sum_{\ell=1}^\fz e^{-\nu_0\ell}
V\lf(x_0,[\ell+1]r_0\r)\\
&&\noz\hs\ls V(x_0,r_0)+\sum_{\ell=1}^\fz e^{-\nu_0\ell}
(\ell+1)^nV\lf(x_0,r_0\r)\ls V(x_0,r_0).
\end{eqnarray}
From \eqref{3.9x1} and \eqref{b.a}, we further deduce that,
for all $(k,\az,\bz)\in\sci$,
\begin{equation}\label{3.9x2}
\lf\|\psi^k_{\az,\,\bz}\r\|_{\lon}\ls\sqrt{V(x^{k+1}_\bz,\dz^k)}.
\end{equation}

Moreover, for any $\ell\in\zz_+:=\{0\}\cup\nn$ and
$x\in2^{\ell+3}B\bh2^{\ell+2} B$, by \eqref{3.9x2},
\eqref{b.a}, $x^{k+1}_\bz\in2B$ [and hence
$d(x,x_\bz^{k+1})\ge\frac{1}{2}d(x,c_B)$ and
$B(x^{k+1}_\bz, \dz^k)\st3B$], the geometrically doubling condition,
(i) and (iii) of Remark \ref{rb.l}, $K_0\ge G_0+n+1$
and $\dz^{M+1}<r_B$, we conclude that
\begin{eqnarray*}
\lf|\bsz^M_\ccc(x)\r|&&\le\sum_{k=M+1}^\fz\sum_{\{\az\in\sca_k,\,
\bz\in\wz{L}(k,\,\az):\ x_\bz^{k+1}\in2B,\,\#L(k,\,\az)>1\}}
\lf|\lf(a,\psi_{\az,\,\bz}^k\r)\r|\lf|\psi_{\az,\,\bz}^k(x)\r|\\
&&\le\|a\|_{\li}\sum_{k=M+1}^\fz\sum_{\{\az\in\sca_k,\,
\bz\in\wz{L}(k,\,\az):\ x_\bz^{k+1}\in2B,\,\#L(k,\,\az)>1\}}
\lf\|\psi_{\az,\,\bz}^k\r\|_{\lon}\lf|\psi_{\az,\,\bz}^k(x)\r|\\
&&\ls\|a\|_{\li}\sum_{k=M+1}^\fz\sum_{\{\az\in\sca_k,\,
\bz\in\wz{L}(k,\,\az):\ x_\bz^{k+1}\in2B,\,\#L(k,\,\az)>1\}}
e^{-\nu\dz^{-k}d(x,\,x^{k+1}_\bz)}\\
&&\ls\|a\|_{\li}\sum_{k=M+1}^\fz\sum_{\{\az\in\sca_k,\,
\bz\in\wz{L}(k,\,\az):\ x_\bz^{k+1}\in2B,\,\#L(k,\,\az)>1\}}
e^{-\frac{\nu}{2}\dz^{-k}d(x,\,c_B)}\\
&&\ls[\mu(B)]^{-1}\sum_{k=M+1}^\fz\sum_{\{\az\in\sca_k,\,
\bz\in\wz{L}(k,\,\az):\ x_\bz^{k+1}\in2B,\,\#L(k,\,\az)>1\}}
e^{-\nu2^{\ell+1}\frac{r_B}{\dz^{k}}}\\
&&\ls[\mu(B)]^{-1}\sum_{k=M+1}^\fz
\lf[\frac{r_B}{\dz^k}\r]^{G_0}
e^{-\nu2^{\ell+1}\frac{r_B}{\dz^{k}}}\\
&&\ls[\mu(B)]^{-1}\sum_{k=M+1}^\fz
2^{-(\ell+1)K_0}\lf[\frac{\dz^k}{r_B}\r]^{K_0}
\lf[\frac{r_B}{\dz^k}\r]^{G_0}\\
&&\ls[\mu(B)]^{-1}2^{-(\ell+1)K_0}\sum_{k=M+1}^\fz
\frac{\dz^k}{r_B}\ls[\mu(B)]^{-1}2^{-(\ell+1)K_0}.
\end{eqnarray*}
Thus, by this and \eqref{a.b}, we further have
\begin{eqnarray}\label{c.5}
&&\lf\|\wz{\bsz}^M_\ccc\chi_{2^{\ell+3}B\bh2^{\ell+2}B}\r\|_{\ltw}\\
&&\noz\hs\ls\frac1{\mu_M}[\mu(B)]^{-1}2^{-(\ell+1)K_0}
\lf[\wz{C}_{(\cx)}\r]^{\ell/2}[\mu(B)]^{1/2}
\ls2^{-\ell/2}\eta_{\ell}[\mu(4B)]^{-1/2}.
\end{eqnarray}

To prove the claim in \eqref{3.7x}, we need to further show that
\begin{equation}\label{c.o}
\int_{\cx}\wz{\bsz}_{\ccc}^Md\mu=0.
\end{equation}
By the H\"older inequality,
\eqref{c.3}, \eqref{c.5}, \eqref{a.b} and $K_0\ge G_0+n+1$,
we know that
\begin{eqnarray*}
\lf\|\wz{\bsz}_C^M\r\|_{\lon}&&\le\lf\|\wz{\bsz}_C^M\chi_{4B}\r\|_{\lon}
+\sum_{\ell=0}^\fz\lf\|\wz{\bsz}_C^M
\chi_{2^{\ell+3}B\bh2^{\ell+2}B}\r\|_{\lon}\\
&&\le\lf\|\wz{\bsz}_C^M\r\|_{\ltw}[\mu(4B)]^{1/2}
+\sum_{\ell=0}^\fz\lf\|\wz{\bsz}_C^M
\chi_{2^{\ell+3}B\bh2^{\ell+2}B}\r\|_{\ltw}
\lf[\mu\lf(2^{\ell+3}B\r)\r]^{1/2}\\
&&\ls1+\sum_{\ell=0}^\fz 2^{\frac{n}{2}\ell}2^{-\frac{\ell}{2}}\eta_\ell
\ls1.
\end{eqnarray*}

Moreover, let $U_\ell(B):=2^{\ell+3}B\bh2^{\ell+2}B$
for any $\ell\in\zz_+$. By $\bsz_{\ccc}^M\in\lon$,
Theorem \ref{tb.a} and \eqref{b.c}, we conclude that
\begin{eqnarray}\label{c.x}
\qquad\int_{\cx}\bsz_{\ccc}^M d\mu
&&=\int_{\cx}\chi_{4B}\bsz_{\ccc}^M d\mu
+\sum_{\ell=0}^\fz\int_{\cx}\chi_{U_\ell(B)}\bsz_{\ccc}^M d\mu\\
&&\noz=\lf(\bsz_{\ccc}^M,\chi_{4B}\r)+\sum_{\ell=0}^\fz
\lf(\bsz_{\ccc}^M,\chi_{U_\ell(B)}\r)\\
&&\noz=\sum_{k=M+1}^\fz\sum_{\{\az\in\sca_k,\,
\bz\in\wz{L}(k,\,\az):\ x_\bz^{k+1}\in2B,\,\#L(k,\,\az)>1\}}
\ez^{k}_{\az,\,\bz}\lf(a,\psi^k_{\az,\,\bz}\r)
\lf(\psi_{\az,\,\bz}^k,\chi_{4B}\r)\\
&&\noz\hs+\sum_{\ell=0}^\fz\sum_{k=M+1}^\fz\sum_{\{\az\in\sca_k,\,
\bz\in\wz{L}(k,\,\az):\ x_\bz^{k+1}\in2B,\,\#L(k,\,\az)>1\}}
\ez^{k}_{\az,\,\bz}\lf(a,\psi^k_{\az,\,\bz}\r)\\
&&\hs\hs\times\lf(\psi_{\az,\,\bz}^k,\chi_{U_\ell(B)}\r).\noz
\end{eqnarray}

Now we show that
\begin{equation}\label{3.10x}
\sum_{\ell=0}^\fz\sum_{k=M+1}^\fz\sum_{\{\az\in\sca_k,\,
\bz\in\wz{L}(k,\,\az):\ x_\bz^{k+1}\in2B,\,\#L(k,\,\az)>1\}}
\lf|\lf(a,\psi^k_{\az,\,\bz}\r)
\lf(\psi_{\az,\,\bz}^k,\chi_{U_\ell(B)}\r)\r|<\fz.
\end{equation}

Indeed, from the H\"older inequality and Theorem \ref{tb.a},
it follows that
\begin{eqnarray*}
&&\sum_{\ell=0}^\fz\sum_{k=M+1}^\fz\sum_{\{\az\in\sca_k,\,
\bz\in\wz{L}(k,\,\az):\ x_\bz^{k+1}\in2B,\,\#L(k,\,\az)>1\}}
\lf|\lf(a,\psi^k_{\az,\,\bz}\r)
\lf(\psi_{\az,\,\bz}^k,\chi_{U_\ell(B)}\r)\r|\\
&&\hs\le\sum_{\ell=0}^\fz\lf[\sum_{k=M+1}^\fz\sum_{\{\az\in\sca_k,\,
\bz\in\wz{L}(k,\,\az):\ x_\bz^{k+1}\in2B,\,\#L(k,\,\az)>1\}}
\lf|\lf(a,\psi^k_{\az,\,\bz}\r)\r|^2\r]^{1/2}\\
&&\hs\hs\times\lf[\sum_{k=M+1}^\fz\sum_{\{\az\in\sca_k,\,
\bz\in\wz{L}(k,\,\az):\ x_\bz^{k+1}\in2B,\,\#L(k,\,\az)>1\}}
\lf|\lf(\psi_{\az,\,\bz}^k,\chi_{U_\ell(B)}\r)\r|^2\r]^{1/2}\\
&&\hs\le\|a\|_{\ltw}\sum_{\ell=0}^\fz
\lf[\sum_{k=M+1}^\fz\sum_{\{\az\in\sca_k,\,
\bz\in\wz{L}(k,\,\az):\ x_\bz^{k+1}\in2B,\,\#L(k,\,\az)>1\}}
\lf|\lf(\psi_{\az,\,\bz}^k,\chi_{U_\ell(B)}\r)\r|^2\r]^{1/2}.
\end{eqnarray*}

Now we estimate $|(\psi_{\az,\,\bz}^k,\chi_{U_\ell(B)})|$
for any $\ell\in\zz_+$, $k\in\zz\cap[M+1,\fz)$,
$\az\in\sca_k$ and $\bz\in\wz{L}(k,\az)$ with $x^{k+1}_\bz\in2B$
and $\#L(k,\az)>1$.
Indeed, we choose $M_0$ to be a large enough positive constant
such that $M_0\ge G_0+1$ with $G_0$ as in Remark \ref{rb.l}(ii).
From \eqref{b.a} [together with \eqref{3.9x1}], \eqref{a.b},
$\dz^k\le\dz^{M+1}\le\dz^{N+1}<r_B$ for all $k\in\zz\cap[M+1,\fz)$
and $B(x^{k+1}_\bz,\dz^k)\st B(x^{k+1}_\bz,r_B)\st3B$, we deduce that
\begin{eqnarray*}
\lf|\lf(\psi_{\az,\,\bz}^k,\chi_{U_\ell(B)}\r)\r|
&&\ls\frac1{\sqrt{V(x^{k+1}_\bz,\dz^k)}}\int_{U_\ell(B)}
e^{-\nu\dz^{-k}d(x,x^{k+1}_\bz)}\,d\mu(x)\\
&&\ls \frac1{\sqrt{V(x^{k+1}_\bz,\dz^k)}}e^{\frac{\nu}{2}\dz^{-k}
d(c_B,\,x^{k+1}_\bz)}
\int_{U_\ell(B)}e^{-\frac{\nu}{2}\dz^{-k}d(x,c_B)}
e^{-\frac{\nu}{2}\dz^{-k}d(x,x^{k+1}_\bz)}\,d\mu(x)\\
&&\ls e^{\nu\dz^{-k}r_B}e^{-\nu2^{\ell+1}\dz^{-k}r_B}
\sqrt{V(x^{k+1}_\bz,\dz^k)}\\
&&\ls e^{-\nu2^\ell\dz^{-k}r_B}
\sqrt{\mu(B)}
\ls 2^{-\ell M_0}\lf[\frac{\dz^k}{r_B}\r]^{M_0}
\sqrt{\mu(B)},
\end{eqnarray*}
which, combined with the elementary inequality
\begin{equation}\label{x.x}
\lf[\sum_{j=0}^\fz\lf|a_j\r|\r]^p\le\sum_{j=0}^\fz\lf|a_j\r|^p
\quad {\rm for\ all\ }\{a_j\}_{j=0}^{\fz}\st\cc\ {\rm and\ }
p\in(0,1],
\end{equation}
and the fact that
\begin{eqnarray*}
&&\#\lf\{\az\in\sca_k,\,
\bz\in\wz{L}(k,\,\az):\ x_\bz^{k+1}\in2B,\,\#L(k,\,\az)>1\r\}\\
&&\hs=\#\lf\{\bz\in\scg_k:\ x_\bz^{k+1}\in2B\r\}
\ls\lf[\frac{r_B}{\dz^k}\r]^{G_0}
\end{eqnarray*}
[see Remark \ref{rb.l}(ii)], further implies that
\begin{eqnarray*}
&&\sum_{\ell=0}^\fz\sum_{k=M+1}^\fz\sum_{\{\az\in\sca_k,\,
\bz\in\wz{L}(k,\,\az):\ x_\bz^{k+1}\in2B,\,\#L(k,\,\az)>1\}}
\lf|\lf(a,\psi^k_{\az,\,\bz}\r)
\lf(\psi_{\az,\,\bz}^k,\chi_{U_\ell(B)}\r)\r|\\
&&\hs\ls[\mu(B)]^{-1/2}\sum_{\ell=0}^\fz 2^{-\ell M_0}
\sum_{k=M+1}^\fz\lf[\frac{r_B}{\dz^k}\r]^{G_0}
\lf[\frac{\dz^k}{r_B}\r]^{M_0}[\mu(B)]^{1/2}\\
&&\hs\ls\sum_{\ell=0}^\fz 2^{-\ell M_0}
\sum_{k=M+1}^\fz\frac{\dz^k}{r_B}\ls1.
\end{eqnarray*}
This shows \eqref{3.10x}.

From \eqref{c.x}, \eqref{3.10x} and \eqref{b.c},
we further deduce that
\begin{eqnarray*}
\int_{\cx}\bsz_{\ccc}^M d\mu
&&=\sum_{k=M+1}^\fz\sum_{\{\az\in\sca_k,\,
\bz\in\wz{L}(k,\,\az):\ x_\bz^{k+1}\in2B,\,\#L(k,\,\az)>1\}}
\ez^{k}_{\az,\,\bz}\lf(a,\psi^k_{\az,\,\bz}\r)\\
&&\hs\times\lf[\lf(\psi_{\az,\,\bz}^k,\chi_{4B}\r)+\sum_{\ell=0}^\fz
\lf(\psi_{\az,\,\bz}^k,\chi_{U_\ell(B)}\r)\r]\\
&&=\sum_{k=M+1}^\fz\sum_{\{\az\in\sca_k,\,
\bz\in\wz{L}(k,\,\az):\ x_\bz^{k+1}\in2B,\,\#L(k,\,\az)>1\}}
\ez^{k}_{\az,\,\bz}\lf(a,\psi^k_{\az,\,\bz}\r)
\lf(\psi_{\az,\,\bz}^k,1\r)=0,
\end{eqnarray*}
which shows \eqref{c.o} and hence
completes the proof of the above claim in \eqref{3.7x}.

From the above claim,  Theorem \ref{tc.y}, \eqref{c.4}
and the fact that $\|\bsz^M_\ccc\|_{\ltw}\to0$ as $M\to\fz$,
we further deduce that, for all integer $M\ge N$,
$$
\lf\|\bsz^M_\ccc\r\|_{\hona}\ls\mu_M\ls1\quad
{\rm and}\quad \|\bsz^M_\ccc\|_{\hona}\to0\ {\rm as}\ M\to\fz.
$$
This, combined with \eqref{c.r} and \eqref{c.s},
shows that $\sum_{(k,\,\az,\,\bz)\in\mathscr{I}}
(a,\psi_{\az,\,\bz}^k)\psi_{\az,\,\bz}^k$
converges unconditionally in $\hona$ and
\eqref{3.2x} holds true for $\cs=\sci$.

By the above proof of \eqref{3.2x} with $\cs=\sci$,
we easily see that \eqref{3.2x} also holds
true for any subset $\cs\st\sci$, which
completes the proof of Lemma \ref{lc.a}.
\end{proof}

To obtain an unconditional basis of $\hona$,
we need the boundedness of Calder\'on-Zygmund operators
from $\hona$ to $\lon$ and from $\hona$ to itself.
We first recall some notions and notation from \cite{cw71};
see also \cite{ah13}.
Let $s\in(0,\eta]$ with $\eta$ as in \eqref{b.1}
and $C^s_b(\cx)$ be the set of all
\emph{$s$-H\"older continuous} functions $f$ [namely,
$\sup_{\{x,\,y\in\cx:\ x\neq y\}}
\frac{|f(x)-f(y)|}{[d(x,y)]^s}<\fz$]
with bounded supports, whose \emph{dual space} is denoted
by $(C^s_b(\cx))^*$. We point out that $C^s_b(\cx)$ is dense in $\ltw$
(see, for example, \cite[Proposition 4.5]{ah13}).

Now we introduce the notion of Calder\'on-Zygmund
operators from \cite{cw71}; see also \cite{ah13}.

\begin{definition}\label{d4.6}
A function $K\in L_\loc^1(\{\cx\times
\cx\}\bh\{(x,x):x\in\cx\})$ is called a \emph{Calder\'on-Zygmund
kernel} if there exists a positive constant $C_{(K)}$,
depending on $K$, such that
\begin{enumerate}
\item[(i)] for all $x,\,y\in\cx$ with $x\ne y$,
\begin{equation}\label{d.a}
|K(x,y)|\le C_{(K)}\frac{1}{V(x,y)};
\end{equation}

\item[(ii)] there exist positive constants
$s\in (0,1]$ and $c_{(K)}\in(0,1)$, depending on $K$,
such that
\begin{enumerate}
\item[${\rm (ii)}_1$] for all $x,\,\wz x,\,y\in\cx$
with $d(x,y)\ge c_{(K)}d(x,\wz{x})>0$,
\begin{equation}\label{d.b}
|K(x,y)-K(\wz{x},y)|\le C_{(K)}
\lf[\frac{d(x,\wz{x})}{d(x,y)}\r]^s
\frac1{V(x,y)};
\end{equation}

\item[${\rm (ii)}_2$] for all $x,\,\wz x,\,y\in\cx$
with $d(x,y)\ge c_{(K)}d(y,\wz{y})>0$,
\begin{equation}\label{d.g}
\lf|K(x,y)-K(x,\wz{y})\r|\le C_{(K)}
\lf[\frac{d(y,\wz{y})}{d(x,y)}\r]^s
\frac1{V(x,y)}.
\end{equation}
\end{enumerate}
\end{enumerate}

Let $T:\ C^s_b(\cx)\to (C^s_b(\cx))^*$ be a linear continuous
operator. The operator $T$ is
called  a \emph{Calder\'on-Zygmund operator}
with kernel $K$ satisfying \eqref{d.a}, \eqref{d.b}
and \eqref{d.g} if, for
all $f\in C^s_b(\cx)$,
\begin{equation}\label{d.c}
Tf(x):=\int_{\cx}K(x,y)f(y)\,d\mu(y),\quad x\not\in\supp(f).
\end{equation}
\end{definition}

Then we recall some results from
\cite[Proposition 3.1]{yz08}
(see also \cite[Theorem 4.2]{hyz}) about the
boundedness of Calder\'on-Zygmund operators. In what follows
$T^*1=0$ means that, for all $(1,2)$-atom $a$,
$\int_\cx Ta(x)\,d\mu(x)=0$. By some careful
examinations, we see that this result remains valid over
the metric measure space of homogeneous type
without resorting to the reverse doubling condition,
the details being omitted.

\begin{theorem}\label{te.g}
Let $(\cx,d,\mu)$ be a metric measure space of homogeneous type.
Suppose that $T$ is a Calder\'on-Zygmund operator as in
\eqref{d.c} which is bounded on $\ltw$.
\begin{enumerate}
\item[\rm (i)] Then there exists a positive constant
$C$, depending only on $\|T\|_{\cl(\ltw)}$, $s$, $C_{(K)}$,
$c_{(K)}$ and $\wz{C}_{(\cx)}$,
 such that, for all $f\in \hona$, $Tf\in \lon$ and
$\|Tf\|_{\lon}\le C\|f\|_{\hona}$.

\item[\rm (ii)] If further assuming that $T^*1=0$, then
there exists a positive constant $\wz{C}$,
depending only on $\|T\|_{\cl(\ltw)}$, $s$, $C_{(K)}$,
$c_{(K)}$ and $\wz{C}_{(\cx)}$,
such that, for all $f\in \hona$, $Tf\in \hona$ and
$\|Tf\|_{\hona}\le C\|f\|_{\hona}$.
\end{enumerate}
\end{theorem}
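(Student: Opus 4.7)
The plan is to reduce the assertion to estimates for a single atom and then assemble the result via the atomic decomposition (for (i)) and the molecular characterization in Theorem \ref{tc.y} (for (ii)). By linearity and density, it suffices to prove $\|Ta\|_{L^1(\mathcal{X})}\lesssim 1$ (respectively $\|Ta\|_{H^1_{\rm at}(\mathcal{X})}\lesssim 1$) for an arbitrary $(1,2)$-atom $a$ supported in some ball $B:=B(x_0,r_B)$. The standard splitting
$$
\int_{\mathcal{X}}|Ta|\,d\mu=\int_{2B}|Ta|\,d\mu+\int_{\mathcal{X}\setminus 2B}|Ta|\,d\mu=:\mathrm{I}_1+\mathrm{I}_2
$$
handles (i). For $\mathrm{I}_1$ I would use the Cauchy--Schwarz inequality, the $L^2$-boundedness of $T$, the size condition of the atom, and \eqref{a.b} to obtain $\mathrm{I}_1\le [\mu(2B)]^{1/2}\|T\|_{\mathcal{L}(L^2(\mathcal{X}))}[\mu(B)]^{-1/2}\lesssim 1$. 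For $\mathrm{I}_2$ I would invoke the vanishing moment of $a$ to write, for $x\notin 2B$,
$$
Ta(x)=\int_{B}[K(x,y)-K(x,x_0)]\,a(y)\,d\mu(y),
$$
note that $d(x,y)\sim d(x,x_0)\ge c_{(K)}d(y,x_0)$ on that region (enlarging $2B$ to a fixed multiple if necessary to accommodate $c_{(K)}$), apply \eqref{d.g} to majorize the difference by $[r_B/d(x,x_0)]^s/V(x,x_0)$, and then sum over a dyadic annular decomposition of $\mathcal{X}\setminus 2B$ using the doubling estimate \eqref{a.b} and $\|a\|_{L^1(\mathcal{X})}\le 1$; the resulting geometric-in-$k$ bound $\sum_{k\ge 1}2^{-ks}$ gives $\mathrm{I}_2\lesssim 1$.

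For (ii) the strategy is to show that, for a suitable positive constant $C_0$ (depending only on the quantities listed in the statement), $Ta/C_0$ is a $(1,2,\eta)$-molecule centered at $2B$ for some $\eta=\{\eta_k\}_{k\in\mathbb{N}}$ satisfying \eqref{c.w}, so that Theorem \ref{tc.y} produces a uniform $H^1_{\rm at}(\mathcal{X})$ bound. I would verify:
\begin{itemize}
\item[(M1)] $\|Ta\|_{L^2(\mathcal{X})}\le\|T\|_{\mathcal{L}(L^2(\mathcal{X}))}[\mu(B)]^{-1/2}\lesssim[\mu(2B)]^{-1/2}$ directly from $L^2$-boundedness and doubling.
\item[(M2)] For $k\in\mathbb{N}$ with $k\ge 2$ and $x\in B(x_0,2^kr_B)\setminus B(x_0,2^{k-1}r_B)$, the pointwise estimate from the previous paragraph yields $|Ta(x)|\lesssim 2^{-ks}/V(x_0,2^kr_B)$; integrating over the annulus and using \eqref{a.b} produces a decay factor $\eta_k\sim 2^{-ks}$ (summable against $k$). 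For $k=1$, the $L^2$-bound together with Cauchy--Schwarz suffices.
\item[(M3)] $\int_{\mathcal{X}}Ta\,d\mu=0$ is exactly the content of the hypothesis $T^*1=0$, once one knows from (i) that $Ta\in L^1(\mathcal{X})$ so that the integral makes sense.
\end{itemize}

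The main technical obstacle is (M2): the interplay between the kernel Hölder exponent $s$, the doubling exponent $n$ from \eqref{a.b} and the admissibility condition \eqref{c.w} on $\{\eta_k\}_{k\in\mathbb{N}}$ must be checked carefully so that $\sum_k k\eta_k<\infty$. The geometric factor $2^{-ks}$ from the kernel smoothness beats the polynomial-in-$2^k$ measure blow-up of the annulus, so the required summability is robust; the only delicate point is tracking how $c_{(K)}$ interacts with the choice of enlargement of $B$ (replacing $2B$ by $\lambda B$ for some $\lambda\ge 2$ depending on $c_{(K)}$), after which all estimates depend only on the quantities listed in the statement. Everything else is routine once the atom-by-atom estimate has been set up, since convergence of $\sum_j\lambda_j Ta_j$ in $L^1(\mathcal{X})$, and (via Theorem \ref{tc.y}) in $H^1_{\rm at}(\mathcal{X})$, is controlled by $\sum_j|\lambda_j|$.
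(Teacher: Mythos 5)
First, a point of reference: the paper does not actually prove Theorem \ref{te.g}; it quotes it from \cite{yz08} and \cite{hyz} with the remark that the argument survives without reverse doubling. Your architecture --- atomwise estimates for part (i) and the molecular characterization of Theorem \ref{tc.y} for part (ii) --- is indeed the one behind those references, and your treatment of part (i) ($L^2$-boundedness plus Cauchy--Schwarz on $2B$; cancellation plus the kernel regularity \eqref{d.g} off $2B$) is sound.

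There is, however, a genuine gap in your verification of (M2), precisely at the point you declare ``robust''. Definition \ref{dc.z} carries the built-in normalizing factor $2^{k(1/q-1)}$, so with $q=2$ you must prove $\|Ta\,\chi_{U_k}\|_{\ltw}\le\eta_k\,2^{-k/2}[\mu(B)]^{-1/2}$ on the annulus $U_k$. Your pointwise bound $|Ta(x)|\ls 2^{-ks}/V(x_0,2^kr_B)$ only gives $\|Ta\,\chi_{U_k}\|_{\ltw}\ls 2^{-ks}[V(x_0,2^kr_B)]^{-1/2}\le 2^{-ks}[\mu(B)]^{-1/2}$, which forces $\eta_k\sim 2^{k(1/2-s)}$, and then the admissibility condition \eqref{c.w} fails whenever $s\le 1/2$. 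The obstruction is not the measure growth of the annuli (that term actually helps) but the factor $2^{k(1/q-1)}$ itself: on a normal space one has $V(x_0,2^kr_B)\sim 2^k\mu(B)$ and recovers $\eta_k\sim 2^{-ks}$, which is why the computation closes in \cite{hyz}; on a general metric measure space of homogeneous type there is no reverse doubling and no such lower bound on $V(x_0,2^kr_B)$. The repair is to use $(1,q,\eta)$-molecules with $q\in(1,1/(1-s))$, so that $1-1/q<s$ and one may take $\eta_k:=2^{-k[s-(1-1/q)]}$; condition (M1) for such $q$ follows from the $L^2$ bound on $4B$ via the H\"older inequality together with the same annular estimates off $4B$. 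A secondary issue you gloss over: the reduction to a single atom is not automatic, since $f=\sum_j\lz_ja_j$ converges in $\lon$ and $\hona$ but not in $\ltw$, while $T$ is a priori only given through its $L^2$ extension; one must justify $Tf=\sum_j\lz_jTa_j$, for instance by working first on the dense subspace $\hona\cap\ltw$ with decompositions converging in both norms. That justification is precisely the subject of the cited \cite{yz08} and should not be dismissed as routine.
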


Now we show the following conclusion on an unconditional basis of $\hona$.

\begin{theorem}\label{tc.b}
Let $(\cx,d,\mu)$ be a metric measure space of homogeneous type.
Then
$$
\{\psi_{\az,\,\bz}^k\}_{(k,\,\az,\,\bz)\in\sci},
$$
with $\sci$ as in Lemma \ref{lc.a}, is an unconditional basis of $\hona$.
\end{theorem}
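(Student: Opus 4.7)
The plan is to deduce the unconditional basis property from Lemma~\ref{lc.a}, which handles the case of a single atom, by bootstrapping to arbitrary $f\in\hona$ through the atomic decomposition and then identifying the limit using the fact that $\{\psi_{\az,\,\bz}^k\}_{(k,\,\az,\,\bz)\in\sci}$ is already an $\ltw$-orthonormal basis (Theorem~\ref{tb.a}).

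The key first step is to upgrade Lemma~\ref{lc.a} to a uniform boundedness statement for the formal partial-sum operators
$$
P_\cs f:=\sum_{(k,\,\az,\,\bz)\in\cs}\lf(f,\psi_{\az,\,\bz}^k\r)\psi_{\az,\,\bz}^k,\quad \cs\st\sci,
$$
on all of $\hona$. Since every $\psi_{\az,\,\bz}^k$ lies in $\li$ by \eqref{b.a}, the pairing $(\cdot,\psi_{\az,\,\bz}^k)$ is continuous on $\lon\supset\hona$, so any atomic decomposition $f=\sum_j\lz_j a_j$ with $\sum_j|\lz_j|\le2\|f\|_{\hona}$ yields $(f,\psi_{\az,\,\bz}^k)=\sum_j\lz_j(a_j,\psi_{\az,\,\bz}^k)$. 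Summing over a \emph{finite} $\cs_0\st\cs$ and freely swapping gives $\sum_{(k,\,\az,\,\bz)\in\cs_0}(f,\psi_{\az,\,\bz}^k)\psi_{\az,\,\bz}^k=\sum_j\lz_j\sum_{(k,\,\az,\,\bz)\in\cs_0}(a_j,\psi_{\az,\,\bz}^k)\psi_{\az,\,\bz}^k$; Lemma~\ref{lc.a} bounds each inner sum by $C$ in $\hona$ uniformly in $\cs_0$, so the $j$-series converges absolutely in $\hona$ and, letting $\cs_0$ exhaust $\cs$ and invoking the unconditional convergence furnished by Lemma~\ref{lc.a} for each atom, one obtains $P_\cs f=\sum_j\lz_j P_\cs a_j$ with $\|P_\cs f\|_{\hona}\ls\|f\|_{\hona}$ uniformly in $\cs$.

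Next I would prove that $\sum_{(k,\,\az,\,\bz)\in\sci}(f,\psi_{\az,\,\bz}^k)\psi_{\az,\,\bz}^k$ converges unconditionally in $\hona$ to $f$. Fix any bijection $\sz:\nn\to\sci$ and set $\cs_N:=\{\sz(1),\dots,\sz(N)\}$. For $f$ a finite linear combination of $(1,\fz)$-atoms, Lemma~\ref{lc.a} delivers an $\hona$-limit $g$ of $P_{\cs_N}f$; since the same partial sums converge in $\ltw$ to $f$ by Theorem~\ref{tb.a}, extracting a common a.e.-convergent subsequence forces $g=f$. For general $f\in\hona$, approximate by a finite combination $f_\ez$ of atoms with $\|f-f_\ez\|_{\hona}<\ez$, and apply the uniform bound of Step~1 to get the Cauchy estimate
$$
\lf\|P_{\cs_M}f-P_{\cs_N}f\r\|_{\hona}\ls\ez+\lf\|P_{\cs_M}f_\ez-P_{\cs_N}f_\ez\r\|_{\hona},
$$
forcing convergence in $\hona$ to a limit which must equal $f$ by the same approximation. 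Unconditionality follows from Remark~\ref{r3.3x} since the enumeration was arbitrary. Uniqueness of the coefficients is then immediate: if $f=\sum c_{k,\,\az,\,\bz}\psi_{\az,\,\bz}^k$ in $\hona$, hence in $\lon$, then pairing with any single $\psi_{\az',\,\bz'}^{k'}\in\li$ and using $\ltw$-orthonormality on each finite partial sum yields $c_{k',\,\az',\,\bz'}=(f,\psi_{\az',\,\bz'}^{k'})$.

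The main obstacle is the atomic-to-arbitrary passage in Step~1: the identity $P_\cs f=\sum_j\lz_j P_\cs a_j$ is not a routine Fubini interchange since each $P_\cs a_j$ is only \emph{unconditionally}, not absolutely, convergent in $\hona$, so the swap must first be performed on finite truncations $\cs_0$ where no convergence issue arises and only afterwards extended by the uniform atomic bound. This is precisely where the molecular characterization of $\hona$ (Theorem~\ref{tc.y}), embedded inside Lemma~\ref{lc.a}, enters decisively. The Calder\'on--Zygmund boundedness of Theorem~\ref{te.g} is held in reserve for the equivalent wavelet norm characterizations in Section~\ref{s4} and is not needed for the basis property itself.
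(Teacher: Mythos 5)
Your proposal is correct and shares the paper's overall architecture: Lemma~\ref{lc.a} handles single atoms, the limit is identified through the $\ltw$-expansion of Theorem~\ref{tb.a} via a common almost-everywhere convergent subsequence, the passage to general $f\in\hona$ goes through the atomic decomposition together with a uniform bound on the partial-sum operators, and uniqueness comes from pairing with $\psi^{k}_{\az,\,\bz}\in\li$. The genuine difference is how the uniform bound is obtained. The paper proves that the partial-sum operators $S_N$ (and their sign-modified versions $\wz{S}_N$) are bounded on $\hona$ uniformly in $N$ by treating them as Calder\'on--Zygmund operators: their kernels $K_N$ satisfy the standard estimates uniformly by \cite[Proposition 10.3]{ah13}, $S_N^{*}1=0$, and Theorem~\ref{te.g}(ii) then applies. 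You instead extract the uniform bound directly from the subset-uniform, atom-uniform estimate \eqref{3.2x} of Lemma~\ref{lc.a}: for finite $\cs_0$ the operator $P_{\cs_0}$ is $\lon$-continuous, so the identity $P_{\cs_0}f=\sum_j\lz_jP_{\cs_0}a_j$ is a legitimate interchange and yields $\|P_{\cs_0}f\|_{\hona}\le C\sum_j|\lz_j|\ls\|f\|_{\hona}$; this correctly sidesteps the known failure of ``uniformly bounded on atoms implies bounded on $H^1$'', because $P_{\cs_0}$ is globally defined and continuous on $\lon$ rather than defined only on finite combinations of atoms, and your extension to infinite $\cs$ by dominated convergence over the net of finite truncations is sound (though for the Cauchy estimate only finite difference sets $\cs_M\setminus\cs_N$ are actually needed). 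Your route is more self-contained --- Theorem~\ref{te.g} and \cite[Proposition 10.3]{ah13} are not needed for the basis property itself, only later for Corollary~\ref{cc.c} and Theorem~\ref{tc.d} --- and it verifies unconditionality directly from the definition, for every enumeration, rather than via sign-modified operators. What the paper's route buys in exchange is the uniform $\hona$-boundedness of the operators $T_{\vec{\ez}}$ as a reusable statement that Section~\ref{s4} requires anyway.
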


\begin{proof}
We first show that, for any $(1,\fz)$-atom $a$
\begin{equation}\label{c.b}
a=\sum_{(k,\,\az,\,\bz)\in\sci}
\lf(a,\psi_{\az,\,\bz}^k\r)\psi_{\az,\,\bz}^k
\quad {\rm in}\quad \hona.
\end{equation}
Observe that, by Lemma \ref{lc.a}, $\sum_{(k,\,\az,\,\bz)\in\sci}
\lf(a,\psi_{\az,\,\bz}^k\r)\psi_{\az,\,\bz}^k$ converges
unconditionally in $\hona$.

Let
\begin{equation}\label{3.24z}
\lf\{\sci_N:\ N\in\nn,\ \sci_N\st\sci\ {\rm and\ \sci_N\ is\ finite}\r\}
\end{equation}
be any collection satisfy $\sci_N\uparrow\sci$ (namely, for any $N\in\nn$,
$\sci_N\st\sci_{N+1}$ and $\sci=\bigcup_{N\in\nn}\sci_N$) and
$$
S_N(a):=\sum_{(k,\,\az,\,\bz)\in\sci_N}
\lf(a,\psi_{\az,\,\bz}^k\r)\psi_{\az,\,\bz}^k.
$$
By Lemma \ref{lc.a}, we conclude that there exists $\wz{a}\in\hona$
such that
\begin{equation}\label{c.c}
\wz{a}=\lim_{N\to\fz}\sum_{(k,\,\az,\,\bz)\in\sci_N}
\lf(a,\psi_{\az,\,\bz}^k\r)\psi_{\az,\,\bz}^k
\quad {\rm in}\quad \hona,
\end{equation}
which, together with $\hona\st\lon$ and the Riesz lemma, further
implies that there exists a subsequence
$
\{\sum_{(k,\,\az,\,\bz)\in\sci_{N_m}}
(a,\psi_{\az,\,\bz}^k)\psi_{\az,\,\bz}^k\}_{m\in\nn}
$
of
$
\{\sum_{(k,\,\az,\,\bz)\in\sci_N}
(a,\psi_{\az,\,\bz}^k)\psi_{\az,\,\bz}^k\}_{N\in\nn}
$
such that
\begin{equation}\label{c.cx}
\wz{a}=\lim_{m\to\fz}\sum_{(k,\,\az,\,\bz)\in\sci_{N_m}}
\lf(a,\psi_{\az,\,\bz}^k\r)\psi_{\az,\,\bz}^k
\quad \mu-{\rm almost\ everywhere\ on\ }\cx.
\end{equation}

On the other hand, from Theorem \ref{tb.a},
together with $a\in\ltw$, it follows that
$$
a=\lim_{m\to\fz}\sum_{(k,\,\az,\,\bz)\in\sci_{N_m}}
\lf(a,\psi_{\az,\,\bz}^k\r)\psi_{\az,\,\bz}^k\quad
{\rm in}\quad \ltw,
$$
which, combined with the Riesz lemma and \eqref{c.cx}, further
implies that
$$
\wz{a}=a\quad \mu-{\rm almost\ everywhere\ on\ }\cx.
$$
This, together with \eqref{c.c}, then finishes the proof of \eqref{c.b}.

For all $(k,\az,\bz)\in\sci$ with $\sci$ as in Lemma \ref{lc.a}, from
$\psi^k_{\az,\,\bz}\in\li\st\bmo$, it follows that
$$
\lf\langle f,\psi_{\az,\,\bz}^k\r\rangle
:=\int_{\cx}f\psi_{\az,\,\bz}^k\,d\mu
$$
is well defined in the sense of duality
between $\hona$ and $\bmo$.

Then we claim that, for any $f\in\hona$,
\begin{equation}\label{3.22y}
f=\sum_{(k,\,\az,\,\bz)\in\sci}
\lf\langle f,\psi_{\az,\,\bz}^k\r\rangle\psi_{\az,\,\bz}^k\quad
{\rm in}\quad \hona.
\end{equation}
By Definition \ref{dc.k}, we see that there exist
sequences $\{a_j\}_{j\in\nn}$ of $(1,\fz)$-atoms
and numbers $\{\lz_j\}_{j\in\nn}\st\cc$
satisfying $f=\sum_{j=1}^\fz\lz_j a_j$ in $\lon$
and $\sum_{j=1}^\fz|\lz_j|\ls\|f\|_{\hona}$.

From \eqref{c.b}, it follows that, for any $M\in\nn$,
$f_M:=\sum_{j=1}^M\lz_j a_j$ satisfies
\begin{equation}\label{3.22z}
f_M=\sum_{(k,\,\az,\,\bz)\in\sci}
\lf(f_M,\psi_{\az,\,\bz}^k\r)\psi_{\az,\,\bz}^k\quad
{\rm in}\quad \hona.
\end{equation}

Let $N\in\nn$ and, for any suitable function $f$,
$$
S_N(f):=\sum_{(k,\,\az,\,\bz)\in\sci_N}
\lf\langle f,\psi_{\az,\,\bz}^k\r\rangle\psi_{\az,\,\bz}^k\quad
{\rm with}\ \sci_N\ {\rm as\ in}\ \eqref{3.24z}.
$$
Then, by \eqref{3.22z}, we see that, for any fixed $M\in\nn$,
\begin{equation}\label{c.6}
\lim_{N\to\fz}\lf\|S_N\lf(f_M\r)-f_M\r\|_{\hona}=0.
\end{equation}

Observe that, for any $N\in\nn$, $S_N$ and $S^*_N$,
where $S^*_N$ denotes the \emph{adjoint operator} of $S_N$,
are integral operators with kernels
$$
K_N(x,y):=\sum_{(k,\,\az,\,\bz)\in\sci_N}
\psi^k_{\az,\,\bz}(x)\psi^k_{\az,\,\bz}(y)
$$
and $K^*_N(x,y):=K_N(y,x)$
for all $x,\,y\in\cx$ with $x\neq y$, respectively.
By \cite[Proposition 10.3]{ah13}, we know that, for each $N\in\nn$,
$K_N$ satisfies \eqref{d.a}, \eqref{d.b} and \eqref{d.g}.
From Theorem \ref{tb.a}, we deduce that
$S^*_N(1)=0$ and $\|S_N(f)\|_{\ltw}\le\|f\|_{\ltw}$
for all $f\in\ltw$.

By this and Theorem \ref{te.g}(ii), we conclude that
$\{S_N\}_{N\in\nn}$ are bounded on $\hona$ uniformly
in $N\in\nn$, which further implies that, for each $N\in\nn$,
\begin{equation}\label{3.28x}
\lf\|S_N\lf(f_M\r)-S_N(f)\r\|_{\hona}
=\lf\|S_N\lf(f_M-f\r)\r\|_{\hona}\ls\|f_M-f\|_{\hona}.
\end{equation}
This, combined with \eqref{c.6}, further implies that
\begin{eqnarray*}
&&\limsup_{N\to\fz}\lf\|S_N(f)-f\r\|_{\hona}\\
&&\hs\le\limsup_{N\to\fz}
\lf[\lf\|S_N(f)-S_N(f_M)\r\|_{\hona}+\lf\|S_N(f_M)-f_M\r\|_{\hona}
+\lf\|f_M-f\r\|_{\hona}\r]\\
&&\hs\ls\lf\|f_M-f\r\|_{\hona}
+\lim_{N\to\fz}\lf\|S_N(f_M)-f_M\r\|_{\hona}\\
&&\hs\sim\lf\|f_M-f\r\|_{\hona}\to0,\quad {\rm as\ }M\to\fz,
\end{eqnarray*}
which completes the proof of the claim \eqref{3.22y}.

Now we show the uniqueness of the representations
$$
f=\sum_{(k,\,\az,\,\bz)\in\sci}
\lz_{\az,\,\bz}^k\psi_{\az,\,\bz}^k\quad
{\rm in}\quad \hona
$$
for all numbers $\{\lz_{\az,\,\bz}^k\}_{(k,\,\az,\,\bz)\in\sci}\st\cc$.
Indeed, by the fact that, for all $(k,\az,\bz)\in\sci$,
$\psi^k_{\az,\,\bz}\in\li\st\bmo$ and the
orthogonality of $\{\psi_{\az,\,\bz}^k\}_{(k,\,\az,\,\bz)\in\sci}$
(see Theorem \ref{tb.a}), we conclude that,
for all $(\ell,\gz,\tz)\in\sci$,
$$
\lf\langle f,\psi_{\gz,\,\tz}^\ell\r\rangle
=\sum_{(k,\,\az,\,\bz)\in\sci}
\lz_{\az,\,\bz}^k\lf(\psi_{\az,\,\bz}^k,\psi_{\gz,\,\tz}^\ell\r)
=\lz_{\gz,\,\tz}^\ell,
$$
which is the desired result.

Finally, we prove that $\sum_{(k,\,\az,\,\bz)\in\sci}
\langle f,\psi_{\az,\,\bz}^k\rangle\psi_{\az,\,\bz}^k$ converges
unconditionally. By Remark \ref{r3.3x}, we know that
it suffices to show that, for any
sequence $\{\ez^k_{\az,\,\bz}\}_{(k,\,\az,\,\bz)\in\sci}\st\{-1,1\}$,
\begin{equation}\label{3.22x}
\sum_{(k,\,\az,\,\bz)\in\sci}\ez^k_{\az,\,\bz}
\lf\langle f,\psi_{\az,\,\bz}^k\r\rangle\psi_{\az,\,\bz}^k\quad
{\rm converges\ in\ }\hona.
\end{equation}

Let $N\in\nn$ and
$$
\wz{S}_N(f):=\sum_{(k,\,\az,\,\bz)\in\sci_N}
\ez^k_{\az,\,\bz}
\lf\langle f,\psi_{\az,\,\bz}^k\r\rangle\psi_{\az,\,\bz}^k
\quad{\rm with}\ \sci_N\ {\rm as\ in}\ \eqref{3.24z}.
$$
By some arguments similar to those used in \eqref{3.28x},
we conclude that $\wz{S}_N$ is bounded on $\hona$ uniformly
in $N\in\nn$ and hence, for any $N,\,M\in\nn$, if $f_M$ is as above, then
$$
\lf\|\wz{S}_N(f)-\wz{S}_N(f_M)\r\|_{\hona}
\ls\lf\|f-f_M\r\|_{\hona}.
$$
Observe also that, by Lemma \ref{lc.a} and Remark \ref{r3.3x},
we know that $\{\wz{S}_N\lf(a_j\r)\}_{N\in\nn}$ for $j\in\{1,\ldots, M\}$
is a Cauchy sequence in $\hona$. By these facts, we further conclude
that, for all $M\in\nn$,
\begin{eqnarray*}
&&\limsup_{N,\,K\to\fz}\lf\|\wz{S}_N(f)-\wz{S}_K(f)\r\|_{\hona}\\
&&\hs\le\limsup_{N\to\fz}\lf\|\wz{S}_N(f)-\wz{S}_N\lf(f_M\r)\r\|_{\hona}
+\limsup_{N,\,K\to\fz}\lf\|\wz{S}_N\lf(f_M\r)-\wz{S}_K\lf(f_M\r)\r\|_{\hona}\\
&&\hs\hs+\limsup_{K\to\fz}\lf\|\wz{S}_K\lf(f_M\r)-\wz{S}_K(f)\r\|_{\hona}\\
&&\hs\ls\lf\|f-f_M\r\|_{\hona}+\sum_{j=1}^M\lf|\lz_j\r|
\lim_{N,\,K\to\fz}\lf\|\wz{S}_N\lf(a_j\r)-\wz{S}_K\lf(a_j\r)\r\|_{\hona}\\
&&\hs\ls\lf\|f-f_M\r\|_{\hona}\to 0\quad {\rm as\ }M\to\fz,
\end{eqnarray*}
which, together with the completeness of $\hona$, implies that
\eqref{3.22x} holds true. This finishes the proof of Theorem \ref{tc.b}.
\end{proof}

\section{Equivalent Wavelet Characterizations of $\hona$\label{s4}}

\hskip\parindent In this section, we establish several equivalent
wavelet characterizations of $\hona$.
To this end, we first recall a version of the Khintchine inequality;
see, for example, \cite[Theorem 12.5.1]{g07}.

\begin{lemma}\label{ld.e}
Let $A$ be a countable index set and
$\boz$ be the product set $\{1,-1\}^{A}$, associated with
the Bernoulli probability measure $d\pp(\oz)$, namely, the product
$\prod_{a\in A}d\pp_{a}(\oz)$ of measures $d\pp_{a}(\oz)$
($a\in A$) such that $\pp_{a}(\{-1\})=1/2=\pp_{a}(\{1\})$,
where $\oz$ is an element of $\boz$ in the
form of $\{\oz(a)\}_{a\in A}\st\{-1,1\}$.
Suppose that $q\in(0,\fz)$. Then there exists a positive constant $C$ such that,
for all $\{\lz(a)\}_{a\in A}\st\cc$ and functions of the form,
$S(\oz):=\sum_{a\in A}\lz(a)w(a)$, it holds true that
$$
C^{-1}\lf[\sum_{a\in A}\lf|\lz(a)\r|^2\r]^{\frac12}
\le\lf[\int_{\boz}|S(\oz)|^q\,d\pp(\oz)\r]^{\frac1q}
\le C\lf[\sum_{a\in A}\lf|\lz(a)\r|^2\r]^{\frac12}.
$$
\end{lemma}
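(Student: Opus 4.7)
The plan is to reduce to the case where $A$ is finite with real coefficients, establish the $q=2$ case exactly via orthogonality, and then push to general $q$ by a subgaussian moment bound for $q\ge 2$ together with a H\"older interpolation argument for $q<2$.

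First, by monotone/dominated convergence it suffices to prove the inequality with a uniform constant on finite subsets of $A$, after which we let the subset exhaust $A$. Splitting $\lz(a)=\lz_1(a)+i\lz_2(a)$ into real and imaginary parts, using $|\lz(a)|^2=|\lz_1(a)|^2+|\lz_2(a)|^2$, the triangle inequality in $L^q$ (when $q\ge1$; the case $q<1$ uses \eqref{x.x}), and the elementary inequality $(u+v)^{1/2}\le u^{1/2}+v^{1/2}$, reduces everything to the case where the $\lz(a)$ are real. Next, since the Rademacher variables $\{w(a)\}_{a\in A}$ are orthonormal in $L^2(\boz,d\pp)$ by independence, the identity
$$
\int_{\boz}|S(\oz)|^2\,d\pp(\oz)=\sum_{a\in A}|\lz(a)|^2
$$
holds exactly, which handles $q=2$ with constant $1$.

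For $q\in[2,\fz)$, I would exploit the subgaussian tail of $S$. Using independence and the pointwise bound $\cosh(x)\le e^{x^2/2}$, one computes
$$
\int_{\boz}e^{tS(\oz)}\,d\pp(\oz)=\prod_{a\in A}\cosh(t\lz(a))
\le\exp\lf(\frac{t^2}{2}\sum_{a\in A}|\lz(a)|^2\r)
$$
for every $t\in\rr$. A Chernoff argument then yields $\pp(|S|>\lz)\le 2\exp(-\lz^2/(2\sum|\lz(a)|^2))$, and integrating $\int_0^\fz q\lz^{q-1}\pp(|S|>\lz)\,d\lz$ gives $\|S\|_{L^q(\boz)}\le C\sqrt{q}(\sum_{a\in A}|\lz(a)|^2)^{1/2}$, furnishing the upper bound; the lower bound in this range is immediate from H\"older's inequality $\|S\|_{L^2}\le\|S\|_{L^q}$ since $\pp$ is a probability measure.

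For $q\in(0,2)$, the upper bound again follows directly from H\"older (or Jensen). For the lower bound, choose $\theta\in(0,1)$ satisfying $\frac12=\frac{\theta}{q}+\frac{1-\theta}{4}$ and apply H\"older's inequality to obtain
$$
\lf[\sum_{a\in A}|\lz(a)|^2\r]^{1/2}=\|S\|_{L^2(\boz)}
\le\|S\|_{L^q(\boz)}^{\theta}\|S\|_{L^4(\boz)}^{1-\theta}.
$$
Substituting the already-proved bound $\|S\|_{L^4(\boz)}\le C(\sum|\lz(a)|^2)^{1/2}$ from the $q=4$ case and solving for $\|S\|_{L^q(\boz)}$ yields the desired lower bound. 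The main technical obstacle is producing the subgaussian moment estimate with the correct $\sqrt{q}$ growth; once that is in hand, every other step is routine H\"older/Jensen manipulation, and the approximation to finite $A$ is just bookkeeping.
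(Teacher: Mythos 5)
Your proof is correct. Note, however, that the paper does not prove this lemma at all: it is the classical Khintchine inequality, simply quoted from the literature (Garling, \emph{Inequalities}, Theorem 12.5.1). What you have written is the standard self-contained argument -- orthonormality of the Rademacher system for $q=2$, the subgaussian bound $\prod_{a}\cosh(t\lz(a))\le\exp(t^{2}\sigma^{2}/2)$ with a Chernoff tail estimate and layer-cake integration for the upper bound when $q\ge2$, Jensen for the easy directions, and log-convexity of $L^{p}$-norms (interpolating $L^{2}$ between $L^{q}$ and $L^{4}$) for the lower bound when $q<2$ -- and each step checks out: the interpolation exponent $\theta$ determined by $\frac12=\frac{\theta}{q}+\frac{1-\theta}{4}$ does lie in $(0,1)$ for $q\in(0,2)$, the reduction to real coefficients works in both directions since $|S_{1}|,|S_{2}|\le|S|$ pointwise, and the restriction to finite subsets of $A$ loses nothing because the constants are uniform. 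Since the constant in the lemma is allowed to depend on $q$, even the $\sqrt{q}$ growth you track is more than is needed here. In short, you have supplied a complete proof of a result the authors only cite.
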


The following lemma is a slight variant of \cite[Corollary 7.10]{w97}.

\begin{lemma}\label{lc.x}
Suppose that $(\cx,d,\mu)$ is a metric measure space of homogeneous type,
$A$ is a countable index set and the series
$\sum_{a\in A}f_a$
converges unconditionally in $\lq$ with $q\in(0,\fz)$. Then
$$
\lf\|\lf(\sum_{a\in A}|f_a|^2\r)^{1/2}\r\|_{\lq}
\le\sup\lf\{\lf\|\sum_{a\in A}\ez_{a}f_a\r\|_{\lq}:\
\{\ez_a\}_{a\in A}\st\{-1,1\}\r\}<\fz,
$$
where the supremum is taken over all choices of
$\{\ez_a\}_{a\in A}\st\{-1,1\}$.
\end{lemma}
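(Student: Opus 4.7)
The plan is to apply the Khintchine inequality (Lemma \ref{ld.e}) pointwise on finite subsets $F\st A$, integrate via Tonelli's theorem, and then pass to the limit $F\uparrow A$ by monotone convergence. Let $\boz:=\{-1,1\}^A$ be equipped with the Bernoulli product measure $d\pp$, and for $\oz=\{\oz(a)\}_{a\in A}\in\boz$ set
$$
S(\oz):=\sum_{a\in A}\oz(a)f_a.
$$
The unconditional convergence hypothesis applied to the sign sequence $\oz\in\{-1,1\}^A$ guarantees that $S(\oz)$ converges in $\lq$ for every $\oz\in\boz$; this series plays the role of the generic signed sum appearing in the right-hand side of the claimed inequality, with candidate upper bound $M:=\sup_{\oz\in\boz}\|S(\oz)\|_{\lq}$.

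The first step is to verify $M<\fz$. For $q\in[1,\fz)$, $\lq$ is a Banach space and this follows from the Banach--Steinhaus theorem applied to the continuous linear maps $T_F:\ell^{\fz}(A)\to\lq$, $T_F\oz:=\sum_{a\in F}\oz(a)f_a$, indexed by finite $F\st A$, whose pointwise-in-$\oz$ limit is $S(\oz)$. For $q\in(0,1)$, an analogous uniform boundedness argument adapts to the quasi-Banach setting. This is the main obstacle I anticipate, although it is standard.

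Next, for any finite $F\st A$ I would apply Lemma \ref{ld.e} with scalars $\lz(a):=f_a(x)$ at $\mu$-a.e.\ $x\in\cx$ to obtain
$$
C^{-1}\lf[\sum_{a\in F}|f_a(x)|^2\r]^{1/2}
\le\lf[\int_{\boz}\lf|\sum_{a\in F}\oz(a)f_a(x)\r|^q\,d\pp(\oz)\r]^{1/q}.
$$
Raising to the $q$-th power, integrating over $\cx$ with respect to $d\mu$, and invoking Tonelli's theorem then yields
$$
C^{-q}\lf\|\lf[\sum_{a\in F}|f_a|^2\r]^{1/2}\r\|_{\lq}^q
\le\int_{\boz}\lf\|\sum_{a\in F}\oz(a)f_a\r\|_{\lq}^q\,d\pp(\oz).
$$
Writing $\sum_{a\in F}\oz(a)f_a=\frac{1}{2}(S(\oz_1)-S(\oz_2))$ for $\oz_1,\,\oz_2\in\boz$ chosen by extending $\oz$ and $-\oz$ off $F$ with matching signs (so the tails cancel) shows that each such finite sum has $\lq$-norm bounded by a constant multiple of $M$ (with the constant degenerating like $2^{1/q-1}$ when $q<1$ through the quasi-triangle inequality), whence the right-hand side above is $\ls M^q$. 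Finally, letting $F\uparrow A$ and invoking the monotone convergence theorem on the left produces $\|(\sum_{a\in A}|f_a|^2)^{1/2}\|_{\lq}\ls M$, which is the stated inequality (with the implicit Khintchine and quasi-triangle constants absorbed into $\ls$).
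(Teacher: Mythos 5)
Your proof is correct and follows essentially the same route as the paper's: the Khintchine inequality (Lemma \ref{ld.e}) applied pointwise, the Fubini--Tonelli theorem, and the boundedness of all signed sums $\sum_{a\in A}\ez_af_a$ coming from unconditional convergence (for which the paper simply cites Wojtaszczyk's Corollary 7.4, while you supply a uniform-boundedness argument). Your finite-subset truncation followed by monotone convergence is just a slightly more careful version of the paper's direct application of Khintchine to the full series, and both arguments yield the inequality only up to a multiplicative constant, exactly as in the paper's own proof.
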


\begin{proof}
Let $q\in(0,\fz)$.
From the Khintchine inequality (Lemma \ref{ld.e}), the
Fubini-Tonelli theorem, the unconditional convergence of
$\sum_{a\in A}f_a$, \cite[Corollary 7.4]{w97} and
$\pp(\boz)=1$, it follows that
\begin{eqnarray*}
\lf\|\lf(\sum_{a\in A}\lf|f_a\r|^2\r)^{1/2}\r\|^q_{\lq}
&&\ls\int_{\cx}\int_{\boz}\lf|\sum_{a\in A}\oz(a)f_a(x)\r|^q\,d\pp(\oz)d\mu(x)\\
&&\sim\int_{\boz}\int_{\cx}\lf|\sum_{a\in A}\oz(a)f_a(x)\r|^q\,d\mu(x)d\pp(\oz)\\
&&\ls\sup\lf\{\lf\|\sum_{a\in A}\ez_{a}f_a\r\|^q_{\lq}:\
\{\ez_a\}_{a\in A}\st\{-1,1\}\r\}<\fz,
\end{eqnarray*}
which completes the proof of Lemma \ref{lc.x}.
\end{proof}

\begin{corollary}\label{cc.c}
Let $(\cx,d,\mu)$ be a metric measure space of homogeneous type.
Then there exists a positive constant $C$ such that, for all $f\in \hona$,
$$
\int_\cx\lf[\sum_{(k,\az,\bz)\in\sci}\lf|
\lf\langle f,\psi_{\az,\,\bz}^k\r\rangle\r|^2
\lf|\psi^k_{\az,\,\bz}(x)\r|^2\r]^{1/2}\,d\mu(x)
\le C\|f\|_{\hona},
$$
with $\sci$ as in Lemma \ref{lc.a}.
\end{corollary}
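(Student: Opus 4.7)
The plan is to combine the unconditional basis property established in Theorem \ref{tc.b} with the square-function bound in Lemma \ref{lc.x}. Let $f\in\hona$. By Theorem \ref{tc.b}, the expansion
$$
f=\sum_{(k,\,\az,\,\bz)\in\sci}\lf\langle f,\psi^k_{\az,\,\bz}\r\rangle\psi^k_{\az,\,\bz}
$$
converges unconditionally in $\hona$. Since every $\hona$ function lies in $\lon$ with $\|\cdot\|_{\lon}\le\|\cdot\|_{\hona}$ (by Definition \ref{dc.k} and the atomic decomposition, any $(1,\fz)$-atom $a$ satisfies $\|a\|_{\lon}\le1$), the same series also converges unconditionally in $\lon$.

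Applying Lemma \ref{lc.x} with $q=1$, the index set $A=\sci$ and the functions $f_{k,\,\az,\,\bz}:=\langle f,\psi^k_{\az,\,\bz}\rangle\psi^k_{\az,\,\bz}$, I obtain
$$
\int_\cx\lf[\sum_{(k,\,\az,\,\bz)\in\sci}\lf|\lf\langle f,\psi^k_{\az,\,\bz}\r\rangle\r|^2\lf|\psi^k_{\az,\,\bz}(x)\r|^2\r]^{1/2}d\mu(x)
\le\sup_{\{\ez^k_{\az,\,\bz}\}\st\{-1,1\}}\lf\|\sum_{(k,\,\az,\,\bz)\in\sci}\ez^k_{\az,\,\bz}\lf\langle f,\psi^k_{\az,\,\bz}\r\rangle\psi^k_{\az,\,\bz}\r\|_{\lon}.
$$
Thus it suffices to control the right-hand side by $C\|f\|_{\hona}$ uniformly in the choice of signs.

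For this, I would invoke once again the unconditional convergence of the wavelet expansion of $f$ in $\hona$ provided by Theorem \ref{tc.b}: by Remark \ref{r3.3x}, for every sign sequence $\{\ez^k_{\az,\,\bz}\}\st\{-1,1\}$ the series $\sum_{(k,\,\az,\,\bz)\in\sci}\ez^k_{\az,\,\bz}\langle f,\psi^k_{\az,\,\bz}\rangle\psi^k_{\az,\,\bz}$ converges in $\hona$, and its $\hona$-norm is bounded uniformly in the signs by a constant multiple of $\|f\|_{\hona}$ (this uniform control is exactly what is extracted in the proof of Theorem \ref{tc.b}, where the operators $\wz{S}_N$, whose kernels are the partial sums of the reproducing kernel with signs, were shown via Theorem \ref{te.g}(ii) to be bounded on $\hona$ uniformly in $N$, and then to converge). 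Passing from this $\hona$-bound to the desired $\lon$-bound via $\|\cdot\|_{\lon}\le\|\cdot\|_{\hona}$ closes the estimate.

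The only subtle point, and likely the main place to be careful, is to be sure that the uniform-in-signs $\hona$-bound asserted above is genuinely what the proof of Theorem \ref{tc.b} delivers; if one prefers to avoid reopening that proof, the alternative is to apply Lemma \ref{lc.a} directly to the atomic decomposition $f=\sum_j\lz_j a_j$ with $\sum_j|\lz_j|\ls\|f\|_{\hona}$, obtaining for each atom $a_j$ and each finite sign-truncation a uniform $\hona$-bound on the signed partial sums, and then summing in $j$ and taking a limit to get the same estimate for $f$. Either route yields the Corollary.
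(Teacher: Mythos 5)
Your proof is correct and follows essentially the same route as the paper: both reduce the square function, via Lemma \ref{lc.x} with $q=1$, to a uniform-in-signs bound on $\|\sum_{(k,\az,\bz)\in\sci}\ez^k_{\az,\bz}\langle f,\psi^k_{\az,\bz}\rangle\psi^k_{\az,\bz}\|_{\lon}$, and both obtain that bound from the Calder\'on--Zygmund theory of Theorem \ref{te.g} combined with the kernel estimates of \cite[Proposition 10.3]{ah13}. The only cosmetic difference is that the paper assembles the sign-modified operator $T_{\vec{\ez}}$ as a single Calder\'on--Zygmund operator and applies Theorem \ref{te.g}(i) (the $\hona\to\lon$ bound) directly, whereas you pass through the uniform $\hona\to\hona$ bound on the signed partial-sum operators extracted from the proof of Theorem \ref{tc.b} and then use the embedding $\hona\subset\lon$.
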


\begin{proof}
Let $f\in \hona$. From Theorem \ref{tc.b} and
$\hona\st L^1(\cx)$, we deduce that
$$
\sum_{(k,\,\az,\,\bz)\in\sci}
\lf\langle f,\psi^k_{\az,\,\bz}\r\rangle\psi^k_{\az,\,\bz}
$$
converges unconditionally in $\lon$.
For any sequence $\vec{\ez}:=\{\ez^k_{\az,\,\bz}\}_{(k,\az,\bz)\in\sci}
\st\{-1,1\}$, the operator $T_{\vec{\ez}}:\ \ltw\rightarrow\ltw$ is defined
by setting, for any $(k,\,\az,\,\bz)\in\sci$,
$$
T_{\vec{\ez}}\lf(\psi^k_{\az,\,\bz}\r):=\ez^k_{\az,\,\bz}\psi^k_{\az,\,\bz},
$$
which can be extended to an isometric isomorphism on $\ltw$.

Let $\{\sci_N\}_{N\in\nn}$ be any sequence of finite subsets of $\sci$
as in \eqref{3.24z}, $g\in\ltw$ and, for all $N\in\nn$,
$
g_N:=\sum_{(k,\,\az,\,\bz)\in\sci_N}
\lf\langle g,\psi^k_{\az,\,\bz}\r\rangle\psi^k_{\az,\,\bz}
$,
$$
K_{\vec{\ez},\,N}(x,y):=\sum_{(k,\,\az,\,\bz)\in\sci_N}
\ez^k_{\az,\,\bz}\psi^k_{\az,\,\bz}(x)\overline{\psi^k_{\az,\,\bz}(y)}
\quad {\rm for\ all\ }x,\,y\in\cx,
$$
and
$$
K_{\vec{\ez}}(x,y):=\sum_{(k,\,\az,\,\bz)\in\sci}
\ez^k_{\az,\,\bz}\psi^k_{\az,\,\bz}(x)\overline{\psi^k_{\az,\,\bz}(y)}
\quad {\rm for\ all\ }x,\,y\in\cx\ \mathrm{with}\ x\neq y.
$$

Now we claim that $K_{\vec{\ez}}$ is the Calder\'on-Zygmund kernel
of $T_{\vec{\ez}}$.
Indeed, by \cite[Proposition 10.3]{ah13}, we conclude that
$K_{\vec{\ez},\,N},\,K_{\vec{\ez}}
\in L^1_{\loc}(\{\cx\times\cx\}\bh\{(x,x):\ x\in\cx\})$
are Calder\'on-Zygmund kernels satisfying \eqref{d.a},
\eqref{d.b} and \eqref{d.g} with $s:=\eta$ and $c_{(K_{\vec{\ez},\,N})}$
and $C_{(K_{\vec{\ez},\,N})}$ independent of $N\in\nn$,
which, together with the boundedness of $T_{_{\vec{\ez}}}$ on $\ltw$,
the Lebesgue dominated convergence theorem and the Fubini theorem,
further implies that, for all $g,\,h\in C^{\eta}_b(\cx)$
with $\supp(g)\cap\supp(h)=\emptyset$,
\begin{eqnarray*}
\langle K_{\vec{\ez}},g\otimes h\rangle&&=\lim_{N\to\fz}
\lf\langle K_{\vec{\ez},\,N},g\otimes h\r\rangle\\
&&=\lim_{N\to\fz}\int_\cx\int_\cx
K_{\vec{\ez},\,N}(x,y)g(y)h(x)\,d\mu(y)d\mu(x)\\
&&=\lim_{N\to\fz}\lf(T_{\vec{\ez}}\lf(g_N\r),h\r)
=\lf(T_{\vec{\ez}}(g),h\r).
\end{eqnarray*}
Therefore, the above claim holds true, which, combined with
Theorem \ref{te.g}(i), further implies that, for all $f\in\hona$
and sequences $\vec{\ez}\st\{-1,1\}$,
$$
\lf\|T_{\vec{\ez}}(f)\r\|_{\lon}\ls\|f\|_{\hona}.
$$
From this and Lemma \ref{lc.x} with $q=1$, we further deduce that
\begin{eqnarray*}
&&\int_\cx\lf[\sum_{(k,\,\az,\,\bz)\in\sci}
\lf|\lf\langle f, \psi^k_{\az,\,\bz}\r\rangle\r|^2
\lf|\psi^k_{\az,\,\bz}(x)\r|^2\r]^{1/2}\,d\mu(x)\\
&&\noz\hs\ls\sup\lf\{\lf\|\sum_{(k,\,\az,\,\bz)\in\sci}\ez^k_{\az,\,\bz}
\lf\langle f, \psi^k_{\az,\,\bz}\r\rangle\psi^k_{\az,\,\bz}\r\|_{\lon}:\
\lf\{\ez^k_{\az,\,\bz}\r\}_{(k,\,\az,\,\bz)\in\sci}\st\{-1,1\}\r\}\\
&&\noz\hs\sim\sup\lf\{\lf\|T_{\vec{\ez}}(f)\r\|_{\lon}:\ \vec{\ez}\st\{-1,1\}\r\}
\ls\|f\|_{\hona},
\end{eqnarray*}
which completes the proof of Corollary \ref{cc.c}.
\end{proof}

Now we establish several equivalent characterizations
for $\hona$ in terms of wavelets.
To this end, we need more notation.
We point out that, for any $(k,\az,\bz)\in\sci$
with $\sci$ as in Lemma \ref{lc.a}, we have
 $\psi^k_{\az,\,\bz}\in\li$ and hence
$\langle f,\psi^k_{\az,\,\bz}\rangle$ is well defined for
any $f\in\lon$ in the sense of duality between $\lon$ and $\li$.

\begin{theorem}\label{tc.d}
Let $(\cx,d,\mu)$ be a metric measure space of homogeneous type.
Suppose that $f\in\lon$ and
$$
f=\sum_{(k,\,\az,\,\bz)\in\sci}
\langle f,\psi^k_{\az,\,\bz}\rangle\psi^k_{\az,\,\bz}
\quad {\rm in}\quad \lon.
$$
Then the following statements are mutually equivalent:
\begin{enumerate}
\item[{\rm (i)}] $f\in \hona$;

\item[{\rm (ii)}] $\sum_{(k,\,\az,\,\bz)\in\sci}
\langle f,\psi^k_{\az,\,\bz}\rangle\psi^k_{\az,\,\bz}$
converges unconditionally in $L^1(\cx)$;

\item[{\rm (iii)}] $\|f\|_{\rm (iii)}
:=\|\{\sum_{(k,\,\az,\,\bz)\in\sci}
|\langle f,\psi^k_{\az,\,\bz}\rangle|^2
|\psi^k_{\az,\,\bz}|^2\}^{1/2}\|_{L^1(\cx)}<\fz;
$

\item[{\rm (iv)}] $\|f\|_{\rm (iv)}:=
\|\{\sum_{(k,\,\az,\,\bz)\in\sci}
|\langle f,\psi^k_{\az,\,\bz}\rangle|^2
\frac{\chi_{Q_\az^k}}{\mu(Q_\az^k)}\}^{1/2}\|_{L^1(\cx)}<\fz;
$

\item[{\rm (v)}] $\|f\|_{\rm (v)}:=
\|\{\sum_{(k,\,\az,\,\bz)\in\sci}
|\langle f,\psi^k_{\az,\,\bz}\rangle|^2
[R^k_{\az,\,\bz}]^2\}^{1/2}\|_{L^1(\cx)}<\fz$,
\end{enumerate}
here and hereafter,
\begin{equation}\label{c.d}
R^k_{\az,\,\bz}:=\frac{\chi_{W^k_{\az,\,\bz}}}
{\sqrt{\mu(Q^k_\az)}},
\end{equation}
and
\begin{equation}\label{c.7}
W^k_{\az,\,\bz}:=B\lf(y^k_{\bz},\ez_0\dz^k\r)\st Q^k_\az
\end{equation}
as in Theorem \ref{tb.m}.

Moreover, $\|\cdot\|_{\rm (iii)}$, $\|\cdot\|_{\rm (iv)}$
and $\|\cdot\|_{\rm (v)}$ give norms on $\hona$, which are
equivalent to $\|\cdot\|_{\hona}$, respectively.
\end{theorem}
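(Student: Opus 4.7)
The strategy is to close the cycle $(i) \Rightarrow (iii) \Rightarrow (v) \Rightarrow (iv) \Rightarrow (i)$, supplemented by $(i) \Rightarrow (ii) \Rightarrow (iii)$ and the trivial $(iv) \Rightarrow (v)$, so as to obtain all five equivalences together with equivalence of norms.

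I would begin by dispatching the easy directions. $(i) \Rightarrow (ii)$ follows from Theorem \ref{tc.b}, which asserts that $\{\psi^k_{\az,\bz}\}_{(k,\az,\bz)\in\sci}$ is an unconditional basis of $\hona$; the continuous embedding $\hona \hookrightarrow L^1(\cx)$ (see Remark \ref{rc.i}) transfers unconditional convergence to $L^1(\cx)$, and uniqueness of the basis expansion identifies the coefficients as $\langle f,\psi^k_{\az,\bz}\rangle$. $(ii) \Rightarrow (iii)$ is immediate from Lemma \ref{lc.x} with $q=1$; $(i) \Rightarrow (iii)$ is precisely Corollary \ref{cc.c}. $(iii) \Rightarrow (v)$ is pointwise via Theorem \ref{tb.m}, which gives $|\psi^k_{\az,\bz}(x)|^2 \geq c\,[R^k_{\az,\bz}(x)]^2$ on $W^k_{\az,\bz}$ while the right-hand side vanishes elsewhere. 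Finally, $(iv) \Rightarrow (v)$ is immediate from $W^k_{\az,\bz} \subset Q^k_\az$.

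For $(v) \Rightarrow (iv)$, I would exploit that, for each $(k,\az) \in \wz{\sca}$, the balls $\{W^k_{\az,\bz}\}_{\bz \in \wz{L}(k,\az)}$ lie inside $Q^k_\az$ with $\mu(W^k_{\az,\bz}) \sim \mu(Q^k_\az)$ (Remark \ref{rb.z}), while $\#\wz{L}(k,\az)$ is bounded by a dimensional constant. Rewriting the square function in (iv) cube-by-cube and invoking the dyadic Lebesgue differentiation theorem (Lemma \ref{lc.h}) together with the technical Lemma \ref{lc.i}, one transfers the concentrated control encoded by $\chi_{W^k_{\az,\bz}}$ to the spread-out control encoded by $\chi_{Q^k_\az}/\mu(Q^k_\az)$, up to bounded-overlap constants arising from the partition of $Q^k_\az$ by its children at level $k+1$.

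The crucial step is $(iv) \Rightarrow (i)$, a Calder\'on--Zygmund-type atomic/molecular decomposition of $f$ via its discrete square function $S_d(f)$. For each $j \in \zz$, set $\Omega_j := \{x \in \cx:\ S_d(f)(x) > 2^j\}$; each has finite measure by Chebyshev. Using the dyadic grid of Theorem \ref{tb.c}, decompose $\Omega_j \setminus \Omega_{j+1}$ into maximal dyadic cubes $\{Q_{i,j}\}_i$, and assign each wavelet $\psi^k_{\az,\bz}$ to the cube in whose layer $Q^k_\az$ first sits. The resulting blocks $b_{i,j} := \sum \lz^k_{\az,\bz}\psi^k_{\az,\bz}$ have $L^2$ norm controlled by $2^j\sqrt{\mu(Q_{i,j})}$ via orthonormality of the wavelets and the level-set definition, vanishing integrals by \eqref{b.c}, and exponential tail decay by \eqref{b.a}; up to renormalization, each $b_{i,j}$ is a $(1,2,\eta)$-molecule (Definition \ref{dc.z}) centered at $Q_{i,j}$. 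Theorem \ref{tc.y} then yields $f \in \hona$ with $\|f\|_{\hona} \lesssim \|f\|_{(iv)}$, closing the cycle. The main obstacle is precisely this last step: verifying the molecular tail condition M2 requires careful accounting for wavelets whose cubes $Q^k_\az$ straddle several consecutive level sets, and one must ensure that the resulting sum of scalar coefficients is controlled by $\|f\|_{(iv)}$ rather than merely by $\|f\|_{L^1(\cx)}$.
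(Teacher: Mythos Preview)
Your easy implications are correctly identified and match the paper's; so does your sketch of the molecular decomposition for getting back into $\hona$, which is exactly the content of Lemma~\ref{lc.i}. The paper, however, organizes the cycle differently: it closes $(i)\Leftrightarrow(ii)\Leftrightarrow(iii)$, then $(iii)\Rightarrow(v)$ via Theorem~\ref{tb.m}, and then $(v)\Rightarrow(i)$ directly by invoking Lemma~\ref{lc.i} (whose square function $\vz_{\cs}$ is built from the $R^k_{\az,\bz}$, i.e.\ is precisely the $(v)$-functional). Your ``crucial step $(iv)\Rightarrow(i)$'' is therefore not needed as a separate argument: since $(iv)\Rightarrow(v)$ is trivial, Lemma~\ref{lc.i} already does the work, and you are essentially re-deriving that lemma instead of citing it.

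The genuine gap in your proposal is $(v)\Rightarrow(iv)$. Here the pointwise inequality goes the wrong way ($\chi_{W^k_{\az,\bz}}\le \chi_{Q^k_\az}$), so the bounded-overlap observation and measure comparability $\mu(W^k_{\az,\bz})\sim\mu(Q^k_\az)$ do not by themselves let you ``spread out'' the square function; and your invocation of Lemmas~\ref{lc.h} and~\ref{lc.i} is misplaced --- those are the tools behind $(v)\Rightarrow(i)$, not for comparing the two discrete square functions. The paper handles $(v)\Rightarrow(iv)$ with the Fefferman--Stein vector-valued maximal inequality: from $\mu(W^k_{\az,\bz})\sim\mu(Q^k_\az)$ one gets the pointwise bound $\chi_{Q^k_\az}\lesssim [M(\chi_{W^k_{\az,\bz}})^{s}]^{1/s}$ for any $s\in(0,\infty)$, and then applies the vector-valued inequality in $L^{1/r}(\ell^{2/r})$ with $r\in(0,1)$ to transfer the $L^1$ control of the $(v)$-square function to that of $(iv)$. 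Without this (or an equivalent substitute), your cycle does not close.
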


Before we prove Theorem \ref{tc.d},
we first establish several useful lemmas which are
of independent interest.

In what follows, let
\begin{equation}\label{c.8}
\cd:=\lf\{Q^k_{\az}\r\}_{(k,\,\az)\in\sca}
\end{equation}
be the dyadic system as in Theorem \ref{tb.c}.
The following notion of the dyadic maximal function is taken from \cite{abi1}.
Namely, for any $f\in L^1_{\loc}(\cx)$, the \emph{dyadic maximal function}
is defined by setting
$$
M^{dy}(f)(x):=\sup_{x\in Q\in\cd}\frac1{\mu(Q)}\int_Q
|f(y)|\,d\mu(y), \quad x\in\cx.
$$

The following lemma is on the boundedness of $M^{dy}(f)$,
whose proof is completely analogous to that of \cite[Theroem 3.1]{abi1},
the details being omitted.

\begin{lemma}\label{lc.g}
Let $(\cx,d,\mu)$ be a metric measure space of homogeneous type.
Then the following conclusions hold true:

{\rm (a)} For any $\lz\in(0,\fz)$ and $f\in\lon$, there exists a disjoint
family $\cf\st\cd$ such that
$$
\lf\{x\in\cx:\ M^{dy}(f)(x)>\lz\r\}=\bigcup_{Q\in\cf}Q;
$$

{\rm (b)} the weak type $(1,1)$ inequality
$$
\mu\lf(\lf\{x\in\cx:\ M^{dy}(f)(x)>\lz\r\}\r)
\le\frac{1}{\lz}\int_{\cx}|f(y)|\,d\mu(y)
$$
holds true for all $f\in L^1(\cx)$ and $\lz\in(0,\fz)$;

{\rm (c)} for any $p\in(1,\fz]$, there exists a positive constant
$C_{(p)}$, depending on $p$, such that, for all $f\in L^p(\cx)$,
$$
\lf\|M^{dy}(f)\r\|_{\lp}\le C_{(p)}\|f\|_{\lp}.
$$
\end{lemma}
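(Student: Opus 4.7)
The plan is to run the classical Calder\'on--Zygmund stopping-time argument, but with the dyadic system $\cd$ from Theorem \ref{tb.c} replacing the standard Euclidean dyadic cubes, and then interpolate. All three parts follow from a single maximal-cube construction.

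For part (a), fix $\lz\in(0,\fz)$ and $f\in\lon$. For each $x\in\{M^{dy}(f)>\lz\}$ there exists some $Q\in\cd$ with $x\in Q$ and $\frac{1}{\mu(Q)}\int_Q|f|\,d\mu>\lz$. I would next show that, for each such $x$, a \emph{maximal} such cube $Q(x)\in\cd$ exists. Indeed, since the paper assumes $\diam(\cx)=\fz$ (hence $\mu(\cx)=\fz$), and any candidate $Q\ni x$ satisfies
$$
\frac1{\mu(Q)}\int_Q|f|\,d\mu\le\frac{\|f\|_{\lon}}{\mu(Q)},
$$
the averages tend to $0$ as $\mu(Q)\to\fz$, so the chain of dyadic cubes containing $x$ cannot yield arbitrarily large admissible $Q$; maximality is thus realized at some finite scale. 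Using the nested/disjoint dichotomy of Theorem \ref{tb.c}(ii), the family $\cf:=\{Q(x):x\in\{M^{dy}(f)>\lz\}\}$ consists of pairwise disjoint maximal cubes, and $\bigcup_{Q\in\cf}Q=\{M^{dy}(f)>\lz\}$ by construction, proving (a).

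For part (b), I would simply sum over the disjoint family $\cf$ obtained in (a):
$$
\mu\lf(\lf\{x\in\cx:\ M^{dy}(f)(x)>\lz\r\}\r)
=\sum_{Q\in\cf}\mu(Q)
<\frac1\lz\sum_{Q\in\cf}\int_Q|f|\,d\mu
\le\frac{\|f\|_{\lon}}{\lz},
$$
where the strict inequality uses the defining property of each $Q\in\cf$ and the last step uses disjointness. For part (c), the case $p=\fz$ is immediate from $\frac{1}{\mu(Q)}\int_Q|f|\,d\mu\le\|f\|_{\li}$ for every $Q\in\cd$. For $p\in(1,\fz)$, I would apply the Marcinkiewicz interpolation theorem between the weak-$(1,1)$ bound just proved and the trivial strong-$(\fz,\fz)$ bound, with the standard truncation $f=f\chi_{\{|f|>\lz/2\}}+f\chi_{\{|f|\le\lz/2\}}$ to reduce $\{M^{dy}(f)>\lz\}$ to $\{M^{dy}(f\chi_{\{|f|>\lz/2\}})>\lz/2\}$.

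The only nonroutine point is the existence of maximal cubes in (a); this is exactly where one needs $\mu(\cx)=\fz$, which the paper has already flagged as a standing assumption. Everything else is purely combinatorial, relying on the nested/disjoint structure of $\cd$ guaranteed by Theorem \ref{tb.c}(ii)--(iii), so no further geometric input from $(\cx,d,\mu)$ (in particular, no doubling beyond that inherent in the construction of $\cd$) enters the argument.
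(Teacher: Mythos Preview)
Your argument is correct and is exactly the standard dyadic stopping-time proof. The paper does not actually give a proof of this lemma: it simply states that the argument is ``completely analogous to that of \cite[Theorem 3.1]{abi1}'' and omits the details. What you have written is precisely that analogous argument, so there is nothing to compare.

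One small remark on the only point you flagged as nonroutine: to see that $\mu(Q^k(x))\to\fz$ as $k\to-\fz$ along the chain of dyadic cubes containing $x$, note that $x\in Q^k_{\az}\subset B(x^k_{\az},4\dz^k)$ forces $d(x,x^k_{\az})<4\dz^k$, whence $B(x,\dz^k)\subset B(x^k_{\az},5\dz^k)$; combining this with Theorem \ref{tb.c}(iv) and the doubling condition \eqref{a.b} gives $\mu(Q^k_{\az})\ge\mu(B(x^k_{\az},\tfrac13\dz^k))\gtrsim\mu(B(x^k_{\az},5\dz^k))\ge\mu(B(x,\dz^k))\to\mu(\cx)=\fz$. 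With this in hand, your maximal-cube selection, disjointness via Theorem \ref{tb.c}(ii), the weak-$(1,1)$ bound, and Marcinkiewicz interpolation all go through without change.
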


\begin{remark}\label{rc.j}
Let $M$ be the \emph{Hardy-Littlewood maximal function} defined by setting
$$
M(f)(x):=\sup_{\{B\ni x:\ B\ {\rm ball}\}}\frac1{\mu(B)}\int_B|f(y)|\,d\mu(y)
$$
for all $f\in L^1_{\loc}(\cx)$ and $x\in\cx$.
It follows easily from Theorem \ref{tb.c}(iv) and \eqref{a.b}
that there exists a positive
constant $C$ such that, for all $f\in L^1_{\loc}(\cx)$,
$$M^{dy}(f)\le CM(f).$$
It is still unclear whether the inverse of the above
inequality holds true or not;
see \cite{abi1} for some comparisons between
the level sets of $M^{dy}$ and $M$.
\end{remark}

By Lemma \ref{lc.g}, the classical Lebesgue differentiation theorem associated
to the dyadic cubes on $\rr^D$ can be easily generalized to metric
measure spaces of homogeneous type as follows
(see, for example, the proof of \cite[Theorem 6.4]{w97} on $\rr^D$),
the details being omitted.

\begin{lemma}\label{lc.h}
Let $(\cx,d,\mu)$ be a metric measure space of homogeneous type and
$f\in\lon$. Then, for $\mu$-almost every $x\in\cx$ and
for every decreasing
sequence of dyadic cubes $\{Q_j\}_{j=1}^\fz\st\cd$ such that
$\bigcap_{j=1}^\fz Q_j=\{x\}$, it holds true that
$$
\lim_{j\to\fz}\frac{1}{\mu(Q_j)}\int_{Q_j}f(y)\,d\mu(y)=f(x).
$$
\end{lemma}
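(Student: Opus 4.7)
The plan is to adapt the classical Lebesgue differentiation template: verify the conclusion on a dense subclass of $\lon$ by a direct continuity argument, and then promote it to every $f\in\lon$ by using the weak-type $(1,1)$ bound for $M^{dy}$ from Lemma \ref{lc.g}(b) to kill the exceptional set.

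First I would observe that every decreasing chain $\{Q_j\}_{j\in\nn}\st\cd$ with $\bigcap_j Q_j=\{x\}$ has $\diam(Q_j)\to 0$. Indeed, writing $Q_j=Q^{k_j}_{\az_j}$, the scale indices $k_j$ must tend to $\fz$ (otherwise the chain stabilizes, contradicting the one-point intersection), and Theorem \ref{tb.c}(iv) then gives $Q_j\st B(x^{k_j}_{\az_j},4\dz^{k_j})$, so $\diam(Q_j)\le 8\dz^{k_j}\to 0$ uniformly over admissible chains through $x$. Consequently, for any bounded continuous $g$ with bounded support, continuity of $g$ at $x$ immediately yields $\mu(Q_j)^{-1}\int_{Q_j}g\,d\mu\to g(x)$ for every such chain and every $x\in\cx$.

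Next, bounded continuous functions with bounded support are dense in $\lon$---a standard consequence of the outer regularity of $\mu$ combined with Lipschitz cutoffs of balls, and analogous to the density of $C^s_b(\cx)$ in $\ltw$ invoked before Definition \ref{d4.6}. Given $f\in\lon$ and $\ez\in(0,\fz)$, decompose $f=g+h$ with $g$ continuous and boundedly supported and $\|h\|_{\lon}<\ez$, and introduce the dyadic oscillation
$$
\boz(f)(x):=\sup\lf\{\limsup_{j\to\fz}\lf|\frac{1}{\mu(Q_j)}\int_{Q_j}f\,d\mu-f(x)\r|:\ \{Q_j\}\st\cd,\ \bigcap_j Q_j=\{x\}\r\}.
$$
By the previous paragraph $\boz(g)\ev 0$, hence $\boz(f)(x)\le\boz(h)(x)\le M^{dy}(h)(x)+|h(x)|$ for $\mu$-a.e.\ $x$. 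Applying Lemma \ref{lc.g}(b) to $M^{dy}(h)$ together with the Chebyshev inequality to $h$, for each $\lz\in(0,\fz)$,
$$
\mu\lf(\lf\{x\in\cx:\ \boz(f)(x)>2\lz\r\}\r)\le\frac{2}{\lz}\|h\|_{\lon}<\frac{2\ez}{\lz}.
$$
Letting $\ez\to 0$ shows that $\{\boz(f)>2\lz\}$ is $\mu$-null for each $\lz>0$; intersecting over $\lz=1/m$ with $m\in\nn$ then forces $\boz(f)=0$ $\mu$-a.e., which is exactly the conclusion of Lemma \ref{lc.h}.

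The main obstacle is securing the uniform diameter estimate of the first paragraph; once Theorem \ref{tb.c}(iv) is leveraged to convert the abstract "shrinking intersection" hypothesis into the concrete bound $\diam(Q_j)\to 0$ uniformly over all admissible chains, the passage from a dense subclass to all of $\lon$ via the weak-type $(1,1)$ bound for $M^{dy}$ is completely routine.
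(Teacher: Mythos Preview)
Your proof is correct and follows precisely the classical template that the paper points to: the paper omits all details and simply states that ``by Lemma \ref{lc.g}, the classical Lebesgue differentiation theorem associated to the dyadic cubes on $\rr^D$ can be easily generalized \ldots\ (see, for example, the proof of \cite[Theorem 6.4]{w97} on $\rr^D$)'', which is exactly the weak-$(1,1)$-plus-density argument you carry out. Your use of Theorem \ref{tb.c}(iv) and the nested structure of $\cd$ to force $k_j\to\fz$ and hence $\diam(Q_j)\to 0$ is the only point requiring any care in this setting, and you handle it correctly (the standing non-atomicity assumption ensures a stabilized chain cannot collapse to a singleton).
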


Now we introduce a key lemma, which is an extension of
\cite[Proposition 8.15]{w97} on $\rr^D$.

\begin{lemma}\label{lc.i}
Let $(\cx,d,\mu)$ be a metric measure space of homogeneous type.
For any family of numbers,
$\{a(j,\az,\bz)\}_{(j,\,\az,\,\bz)\in\sci}\st\cc$
with $\sci$ as in Lemma \ref{lc.a}, let $\cs$ be any finite
subset of $\sci$ and
$$
\vz_{\cs}(x):=\lf\{\sum_{(j,\,\az,\,\bz)\in\cs}
|a(j,\az,\bz)|^2
\lf[R^j_{\az,\,\bz}(x)\r]^2\r\}^{1/2},\quad x\in\cx,
$$
where $R^j_{\az,\,\bz}$ is as in \eqref{c.d}.
Suppose that and $\vz_{\cs}\in\lon$.
Then the function
$$
\sum_{(j,\,\az,\,\bz)\in\cs}a(j,\az,\bz)
\psi^j_{\az,\,\bz}\in \hona
$$
and there exists a positive constant $C$, independent of $\cs$,
such that
$$
 \lf\|\sum_{(j,\,\az,\,\bz)\in\cs}a(j,\az,\bz)
\psi^j_{\az,\,\bz}\r\|_{\hona}\le C\|\vz_{\cs}\|_{\lon}.
$$
\end{lemma}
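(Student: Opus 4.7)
My approach is a Calder\'on--Zygmund-type decomposition of $f$ based on the level sets of $\vz_{\cs}$, followed by an application of the molecular characterization of $\hona$ (Theorem \ref{tc.y}).

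For every $k\in\zz$, set $\Omega_k:=\{x\in\cx:\vz_{\cs}(x)>2^k\}$ and $\wz{\Omega}_k:=\{x\in\cx:M^{dy}(\chi_{\Omega_k})(x)>1/2\}$, where $M^{dy}$ is the dyadic maximal function from Lemma \ref{lc.g}. By Lemma \ref{lc.g}(a)--(b), $\wz{\Omega}_k$ is a disjoint union of maximal dyadic cubes $\{Q_{k,i}\}_i\st\cd$ and $\mu(\wz{\Omega}_k)\ls\mu(\Omega_k)$. Fix $\gz\in(0,c_0)$, where $c_0$ is the comparability constant in Remark \ref{rb.z} satisfying $\mu(W^j_{\az,\bz})\ge c_0\mu(Q^j_\az)$, and assign each triple $(j,\az,\bz)\in\cs$ with $a(j,\az,\bz)\ne 0$ to the unique integer $k$ for which
\begin{equation*}
\mu(Q^j_\az\cap\Omega_k)>\gz\mu(Q^j_\az)\ge\mu(Q^j_\az\cap\Omega_{k+1});
\end{equation*}
such a $k$ exists because $\vz_{\cs}\ge|a(j,\az,\bz)|/\sqrt{\mu(Q^j_\az)}>0$ on $W^j_{\az,\bz}$ (so $W^j_{\az,\bz}\st\Omega_k$ for small $k$) and $\vz_{\cs}\in\lon$. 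This yields a finite partition $\cs=\bigsqcup_k\cs_k$. For $(j,\az,\bz)\in\cs_k$ one has $Q^j_\az\st\wz{\Omega}_k$, so $Q^j_\az\st Q_{k,i}$ for a unique $i$, giving a further partition $\cs_k=\bigsqcup_i\cs_{k,i}$. By the nesting property of the dyadic system, the level of $Q^j_\az$ satisfies $j\ge j^\ast(k,i)$, where $j^\ast(k,i)$ denotes the level of $Q_{k,i}$.

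Write $m_{k,i}:=\sum_{(j,\az,\bz)\in\cs_{k,i}}a(j,\az,\bz)\psi^j_{\az,\bz}$. My aim is to prove that $m_{k,i}/[C_0\cdot 2^k\mu(Q_{k,i})]$ is a $(1,2,\eta)$-molecule, in the sense of Definition \ref{dc.z}, centered at a ball $B_{k,i}\supset Q_{k,i}$ of radius $\sim\dz^{j^\ast(k,i)}$ and measure $\sim\mu(Q_{k,i})$, for some absolute constant $C_0$ and a sequence $\eta=\{\eta_\ell\}_{\ell\in\nn}$ satisfying \eqref{c.w}. The vanishing moment follows from \eqref{b.c} and the finiteness of $\cs_{k,i}$. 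The $L^2$ bound reduces, via orthonormality of the wavelets, to
\begin{equation*}
\sum_{\cs_{k,i}}|a(j,\az,\bz)|^2\ls\int_{\cx\setminus\Omega_{k+1}}\vz_{\cs_{k,i}}^2\,d\mu\le 4^{k+1}\mu(Q_{k,i}),
\end{equation*}
where the first inequality uses $\mu(W^j_{\az,\bz}\setminus\Omega_{k+1})\gs\mu(Q^j_\az)$ (a consequence of the defining property of $\cs_k$, Remark \ref{rb.z} and the choice $\gz<c_0$), and the second uses $\vz_{\cs}\le 2^{k+1}$ on $\cx\setminus\Omega_{k+1}$ together with $W^j_{\az,\bz}\st Q_{k,i}$. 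Once the molecular claim is established, Theorem \ref{tc.y} gives $\|m_{k,i}\|_{\hona}\ls 2^k\mu(Q_{k,i})$, whence
\begin{equation*}
\|f\|_{\hona}\le\sum_{k,\,i}\|m_{k,i}\|_{\hona}\ls\sum_k 2^k\mu(\wz{\Omega}_k)\ls\sum_k 2^k\mu(\Omega_k)\sim\|\vz_{\cs}\|_{\lon}
\end{equation*}
by the weak $(1,1)$ bound of $M^{dy}$ from Lemma \ref{lc.g}(b) and the layer-cake formula.

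The chief obstacle is the annular decay condition for $m_{k,i}$, since the wavelets $\psi^j_{\az,\bz}$ are not compactly supported. My plan is to exploit the exponential bound \eqref{b.a}: for $(j,\az,\bz)\in\cs_{k,i}$ the inclusion $Q^j_\az\st Q_{k,i}$ forces $\dz^j\le\dz^{j^\ast(k,i)}$, so that $\dz^{-j}d(x,y^j_\bz)\gs 2^\ell$ whenever $x\in 2^\ell B_{k,i}\setminus 2^{\ell-1}B_{k,i}$. Splitting $e^{-\nu\dz^{-j}d(x,y^j_\bz)}\le e^{-\nu 2^{\ell-1}}e^{-\nu\dz^{-j}d(x,y^j_\bz)/2}$, applying Cauchy--Schwarz in $\cs_{k,i}$ against the already established $L^2$ bound, controlling $\sum_\bz e^{-\nu\dz^{-j}d(x,y^j_\bz)}/V(y^j_\bz,\dz^j)\ls 1/V(x,\dz^j)$ for each fixed $j$ via Lemma \ref{lb.x} together with geometric doubling, and then summing geometrically over scales $j\ge j^\ast(k,i)$, should produce the required bound with $\eta_\ell\sim e^{-\nu 2^{\ell-2}}$, which clearly satisfies \eqref{c.w}.
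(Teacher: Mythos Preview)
Your proposal is correct and follows essentially the same strategy as the paper: a stopping-time partition of $\cs$ according to the level sets $\Omega_k=\{\vz_{\cs}>2^k\}$, further split by maximal dyadic cubes, with each resulting piece shown to be (a multiple of) a $(1,2,\eta)$-molecule via the orthonormality of the wavelets for the $L^2$ bound and the exponential decay \eqref{b.a} for the annular estimates. One cosmetic difference: the paper defines the enlarged set $\Omega^\ast_k$ directly as the union of the cubes $Q^j_\az$ meeting the threshold condition and invokes the Lebesgue differentiation lemma (Lemma \ref{lc.h}) to compare $\mu(\Omega^\ast_k)$ with $\mu(\Omega_k)$, whereas you use $\wz{\Omega}_k=\{M^{dy}(\chi_{\Omega_k})>1/2\}$ and the weak $(1,1)$ bound from Lemma \ref{lc.g}; these are equivalent packagings of the same idea, and yours is arguably cleaner.

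One small repair: with $\wz{\Omega}_k$ defined using the cutoff $1/2$, the inclusion $Q^j_\az\st\wz{\Omega}_k$ does not follow from $\mu(Q^j_\az\cap\Omega_k)>\gz\mu(Q^j_\az)$ when $\gz<1/2$; since you need $\gz<c_0$ and $c_0$ may well be below $1/2$, you should set $\wz{\Omega}_k:=\{M^{dy}(\chi_{\Omega_k})>\gz\}$ instead (the weak $(1,1)$ estimate still gives $\mu(\wz{\Omega}_k)\le\gz^{-1}\mu(\Omega_k)$). Also, in the annular step your sketch is right in spirit but needs one more ingredient: after extracting the factor $e^{-c\nu 2^{\ell}}$ you must also extract geometric decay in the scale gap $j-j^\ast(k,i)$ (since $\dz^{-j}d(x,y^j_\bz)\gtrsim 2^{\ell}\dz^{-(j-j^\ast)}$) to make the sum over $j\ge j^\ast(k,i)$ converge; the paper handles this via the bound $e^{-2^{\ell}\nu\dz^{j^\ast-j}}\ls 2^{-M_0\ell}\dz^{-M_0(j^\ast-j)}$ together with the geometric-doubling count of the relevant $(\az,\bz)$ at each scale.
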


\begin{proof}
In order to show this lemma, we write
$$
H:=\sum_{(j,\,\az,\,\bz)\in\cs}a(j,\az,\bz)
\psi^j_{\az,\,\bz}
$$
into a sum of molecules.  This will be done by partitioning the
index set $\cs$ into sets of $\{D(k,\tz)\}_{k\in\zz,\,\tz\in\scb_k}$,
where $\scb_k$ for any $k\in\zz$ denotes some
index set which will be determined later, in a way such that
$$
A_\tz^k:=\sum_{(j,\,\az,\,\bz)\in D(k,\,\tz)}a(j,\az,\bz)\psi^j_{\az,\,\bz}
$$
is an appropriate multiple of a $(1,2,\eta)$-molecule
centered at some ball $B$,
where $\eta$ and $B$ will also be determined later.

For any $k\in\zz$, let $\boz_k:=\{x\in\cx:\ \vz_{\cs}(x)>2^k\}$.
Obviously, $\boz_{k+1}\st\boz_k$ for all $k\in\zz$.
Thus, by this and the facts that
$\mu(\boz_{k+1})\le\|\vz_{\cs}\|_{\lon}/2^{(k+1)}\to 0$, as $k\to\fz$,
and $\bigcup_{k\in\zz}\boz_k=\cx$, we know that
\begin{eqnarray}\label{c.e}
\sum_{k=-\fz}^\fz2^{k}\mu(\boz_k)
&&=\sum_{k=-\fz}^\fz2^{k}\sum_{j=k}^\fz\mu(\boz_j\bh\boz_{j+1})\\
&&\noz\le\sum_{k=-\fz}^\fz2^{k}\sum_{j=k}^\fz2^{-j}
\int_{\boz_j\bh\boz_{j+1}}\vz_{\cs}(x)\,d\mu(x)\\
&&\noz=\sum_{j=-\fz}^\fz\sum_{k=-\fz}^j2^{(k-j)}
\int_{\boz_j\bh\boz_{j+1}}\vz_{\cs}(x)\,d\mu(x)\\
&&\noz\sim\sum_{j=-\fz}^\fz
\int_{\boz_j\bh\boz_{j+1}}\vz_{\cs}(x)\,d\mu(x)
\sim\int_{\cx}\vz_{\cs}(x)\,d\mu(x).
\end{eqnarray}

For any $k\in\zz$, let
$$
\ccc_k:=\lf\{(j,\az,\bz)\in\cs:\
\mu\lf(\boz_k\cap Q^j_\az\r)>\frac1{2C_2}\mu\lf(Q^j_\az\r)\r\},
$$
where $C_2\in[1,\fz)$ is a constant, independent of
$j$, $\az$ and $\bz$, satisfying
\begin{equation}\label{b.u}
\mu(Q_\az^j)\le C_2\mu(W^j_{\az,\,\bz})
\end{equation}
with $W^j_{\az,\,\bz}$ defined as in \eqref{c.7}
(see Remark \ref{rb.z}).
From the decreasing property of $k\mapsto\boz_k$, we deduce that
$\ccc_k\supset\ccc_{k+1}$ for all $k\in\zz$.
Define
\begin{equation}\label{3.31x}
\boz_k^*:=\bigcup_{(j,\,\az,\,\bz)\in\ccc_k}Q_\az^j.
\end{equation}
Now we choose a sequence of decreasing dyadic cubes,
$\{Q^j_{\az(j)}\}_{j\in\nn}\st\cd$,
where $\cd$ is as in \eqref{c.8} and
$\az(j)\in\sca_j$ with $\sca_j$ as in \eqref{b.v}
 for all $j\in\nn$ such that
$\bigcap_{j=1}^\fz Q^j_{\az(j)}=\{x\}$.
Indeed, by Theorem \ref{tb.c}(iii), we see that $x\in\cx=\bigcup_{\az\in\sca_1}Q^j_\az$.
Thus, there exists $\az(1)\in\sca_1$ such that $x\in Q^1_{\az(1)}$.
Moreover, from Remark \ref{rb.d}(ii), we deduce that
$x\in Q^1_{\az(1)}=\bigcup_{\az\in L(1,\az(1))}Q^2_\az$, which further
implies that there exists $\az(2)\in L(1,\az(1))$ such that
$x\in Q^2_{\az(2)}\st Q^1_{\az(1)}$. Repeating this procedure, we
obtain a decreasing sequence of dyadic cubes,
$\{Q^j_{\az(j)}\}_{j\in\nn}\st\cd$, satisfying
$\bigcap_{j=1}^\fz Q^j_{\az(j)}\supset\{x\}$.
On the other hand, by Theorem \ref{tb.c}(iv) with $C_1:=4$, we see that
$\bigcap_{j=1}^\fz Q^j_{\az(j)}\st\bigcap_{j=1}^\fz B(x^j_{\az(j)},4\dz^j)$.
Now we claim that $\bigcap_{j=1}^\fz B(x^j_{\az(j)},4\dz^j)=\{x\}$.
Obviously, by Theorem \ref{tb.c}(v), we have $B(x^{j+1}_{\az(j+1)},4\dz^{j+1})
\st B(x^j_{\az(j)},4\dz^j)$ for any $j\in\nn$, and
 $\bigcap_{j=1}^\fz B(x^j_{\az(j)},4\dz^j)\supset\{x\}$.
Conversely, if $y\in\bigcap_{j=1}^\fz B(x^j_{\az(j)},4\dz^j)$, then
$$
d(x,y)\le d\lf(x,x^j_{\az(j)}\r)+d\lf(x^j_{\az(j)},y\r)
<8\dz^j\to0
$$
as $j\to\fz$.
This shows that $y=x$ and hence the above claim, which further
implies that $\bigcap_{j=1}^\fz Q^j_{\az(j)}=\{x\}$.

By Lemma \ref{lc.h}, we know that, for $\mu$-almost every $x\in\cx$,
$$
\lim_{j\to\fz}\frac1{\mu(Q^j_{\az(j)})}\int_{Q^j_{\az(j)}}
\chi_{\boz_k}(y)\,d\mu(y)=\chi_{\boz_k}(x).
$$
Thus, for $\mu$-almost every $x\in\boz_k$, there exists $j_0\in\nn$ such that
$x\in Q^{j_0}_{\az(j_0)}$ and
\begin{equation}\label{c.f}
\mu\lf(\boz_k\cap Q^{j_0}_{\az(j_0)}\r)>\frac1{2C_2}
\mu\lf(Q^j_\az\r),
\end{equation}
which further implies that $Q^{j_0}_{\az(j_0)}\in\ccc_k$
and $x\in\boz_k^*$. That is, there exists a set $Z$ of measure zero such that
\begin{equation}\label{c.g}
\boz_k\st\boz_k^*\cup Z.
\end{equation}

For a fixed $k\in\zz$, let $\{Q(k,\tz)\}_{\tz\in\scb_k}
:=\{Q^{k(\tz)}_\tz\}_{\tz\in\scb_k}\st\cd$,
where $\cd$ is as in \eqref{c.8}
and $\scb_k$ denotes some unique index set such that
$\{Q(k,\tz)\}_{\tz\in\scb_k}$ is
the class of all maximal dyadic cubes in $\{Q^j_\az:\ (j,\az,\bz)\in\ccc_k\}$
and, for any $\tz\in\scb_k$, $k(\tz)$ denotes some integer depending on $\tz$.
It is easy to see that $\{Q(k,\tz)\}_{\tz\in\scb_k}\st\cd$ is pairwise
disjoint and
\begin{equation}\label{c.p}
\boz_k^*=\bigcup_{\tz\in\scb_k}Q(k,\tz).
\end{equation}
By this, \eqref{c.f} and \eqref{c.g}, we conclude that
\begin{eqnarray}\label{c.h}
\mu\lf(\boz_k^*\r)&&=\sum_{\tz\in\scb_k}\mu(Q(k,\tz))
\ls\sum_{\tz\in\scb_k}\mu(\boz_k\cap Q(k,\tz))\\
&&\noz\sim\mu\lf(\boz_k\cap\lf(\bigcup_{\tz\in\scb_k}Q(k,\tz)\r)\r)
\sim\mu\lf(\boz_k\cap\boz_k^*\r)\sim\mu(\boz_k).
\end{eqnarray}

Observe that, for any $(j,\,\az,\,\bz)\in\cs$,
if $a(j,\az,\bz)\neq0$, then there exists $\wz{k}\in\zz$, depending
on $j$, $\az$ and $\bz$, such that
$|a(j,\az,\bz)|>2^{\wz{k}}[\mu(Q^j_\az)]^{1/2}$.
Thus, for all $x\in W^j_{\az,\,\bz}$,
$\vz_{\cs}(x)>2^{\wz{k}}$, which shows that
$W^j_{\az,\,\bz}\st\boz_{\wz{k}}$. From this, \eqref{c.7}
and \eqref{b.u}, it follows that
$$
\mu\lf(Q^j_\az\cap\boz_{\wz{k}}\r)\ge\mu\lf(W^j_{\az,\,\bz}\r)\ge
\frac1{C_2}\mu\lf(Q^j_\az\r)>\frac1{2C_2}\mu\lf(Q^j_\az\r),
$$
which shows that $(j,\az)\in\ccc_{\wz{k}}$
and hence there exists $k\in\zz$ such that $(j,\az,\bz)\in\ccc_k\bh\ccc_{k+1}$.

Let $k\in\zz$, $\tz\in\scb_k$, $\ce_k:=\ccc_k\bh\ccc_{k+1}$ and
$$
D(k,\tz):=\lf\{(j,\,\az,\,\bz)\in\ce_k:\ Q^j_\az\st Q(k,\tz)\r\}.
$$
Now we claim that this is the desired splitting.
Indeed, for any $(j,\,\az,\,\bz)\in\cs$
such that $a(j,\az,\bz)\neq0$, by the above
proof, we know that there exists
$k\in\zz$ such that $(j,\az,\bz)\in\ccc_k\bh\ccc_{k+1}=:\ce_k$, which,
together with \eqref{3.31x} and \eqref{c.p}, further implies that
there exists $\tz\in\scb_k$ such that $Q^j_\az\st Q(k,\tz)$
and hence $(j,\az,\bz)\in D(k,\tz)$.
On the other hand, it is obvious that
$\bigcup_{k\in\zz,\,\tz\in\scb_k}D(k,\tz)\st\cs$.
Thus, to show the above claim, it suffices to prove that
$\{D(k,\tz)\}_{k\in\zz,\,\tz\in\scb_k}$ are mutually disjoint.
To this end, for $k,\,\wz{k}\in\zz$ and $\tz,\,\wz{\tz}\in\scb_k$,
if there exists $(j,\az,\bz)\in D(k,\tz)
\cap D(\wz{k},\wz{\tz})$, then, by the pairwise disjointness
of $\{\ce_k\}_{k\in\zz}$, we know that $k=\wz{k}$.
Moreover, from $Q^j_\az\st Q(k,\tz)\cap Q(k,\wz{\tz})\neq\emptyset$
and the maximality of $Q(k,\tz)$ and $Q(k,\wz{\tz})$, we
deduce that $Q(k,\tz)=Q(k,\wz{\tz})$ and hence
$\tz=\wz{\tz}$, which, combined with $k=\wz{k}$, further
implies that $D(k,\tz)=D(\wz{k},\wz{\tz})$. This finishes
the proof of the above claim.

As a consequence of the above claim, we have
\begin{equation}\label{c.i}
\sum_{(j,\,\az,\,\bz)\in\cs}
a(j,\az,\bz)\psi^j_{\az,\,\bz}
=\sum_{k\in\zz}\sum_{\tz\in\scb_k}A^k_\tz
\end{equation}
and, for all $k\in\zz$ and $\tz\in\scb_k$,
\begin{equation}\label{c.j}
\lf\|A^k_\tz\r\|^2_{\ltw}
=\sum_{(j,\,\az,\,\bz)\in D(k,\,\tz)}|a(j,\az,\bz)|^2.
\end{equation}
Let $k\in\zz$ and $\tz\in\scb_k$.
Observe that $(j,\az,\bz)\in D(k,\tz)$ implies that $Q^j_\az\st Q(k,\tz)$
and $Q^j_\az\not\in\ccc_{k+1}$. Thus,
$$
\mu\lf(Q^j_\az\bh\boz_{k+1}\r)=\mu\lf(Q_\az^j\r)-
\mu\lf(Q^j_\az\cap\boz_{k+1} \r)\ge\lf(1-\frac1{2C_2}\r)
\mu\lf(Q_\az^j\r).
$$
By the finiteness of $\cs$ and the above claim, we easily conclude that
there are only finitely many $D(k,\tz)\neq\emptyset$. Thus,
assuming that, if $D(k,\tz)=\emptyset$, then $A^k_\tz:=0$,
there are only finitely many $A^k_\tz$ in \eqref{c.i} are non-zero.

Since $[\vz_{\cs}(x)]^2\ge\sum_{(j,\,\az,\,\bz)\in D(k,\,\tz)}|a(j,\az,\bz)|^2
[R^j_{\az,\,\bz}(x)]^2$ for all $x\in\cx$,
where $R^j_{\az,\,\bz}$ is as in \eqref{c.d}, we have
\begin{eqnarray}\label{c.l}
&&\int_{Q(k,\,\tz)\bh\boz_{k+1}}[\vz_{\cs}(x)]^2\,d\mu(x)\\
&&\hs\noz\ge\sum_{(j,\,\az,\,\bz)\in D(k,\,\tz)}|a(j,\az,\bz)|^2
\int_{Q(k,\,\tz)\bh\boz_{k+1}}[R^j_{\az,\,\bz}(x)]^2\,d\mu(x)\\
&&\noz\hs=\sum_{(j,\,\az,\,\bz)\in D(k,\,\tz)}|a(j,\az,\bz)|^2
\frac{\mu(W^j_{\az,\,\bz}\cap[Q(k,\tz)\bh\boz_{k+1}])}{\mu(Q^j_\az)}.
\end{eqnarray}
By $W^j_{\az,\,\bz}\st Q^j_\az\st Q(k,\tz)$
[see \eqref{c.7}], we find that
$W^j_{\az,\,\bz}\cap [Q(k,\tz)\bh\boz_{k+1}]
=W^j_{\az,\,\bz}\bh\boz_{k+1}$.
From this, $Q^j_\az\not\in\ccc_{k+1}$ and \eqref{b.u}, it follows that
\begin{eqnarray}\label{c.m}
&&\mu\lf(W^j_{\az,\,\bz}\cap\lf[ Q(k,\tz)\bh\boz_{k+1}\r]\r)\\
&&\noz\hs=\mu\lf(W^j_{\az,\,\bz}\bh\boz_{k+1}\r)
=\mu\lf(W^j_{\az,\,\bz}\r)-\mu\lf(W^j_{\az,\,\bz}\cap\boz_{k+1}\r)\\
&&\noz\hs\ge\mu\lf(W^j_{\az,\,\bz}\r)
-\mu\lf(Q^j_\az\cap\boz_{k+1}\r)\ge\frac1{2C_2}\mu\lf(Q_\az^j\r).
\end{eqnarray}
Moreover, combining \eqref{c.j}, \eqref{c.l} and \eqref{c.m}, we conclude that
\begin{eqnarray}\label{c.n}
\lf\|A^k_\tz\r\|^2_{\ltw}&&\ls\int_{Q(k,\,\tz)\bh \boz_{k+1}}
[\vz_{\cs}(x)]^2\,d\mu(x)\\
&&\noz\ls2^{2(k+1)}\mu\lf(Q(k,\tz)\bh\boz_{k+1}\r)
\ls4^k\mu(Q(k,\tz)).
\end{eqnarray}
Thus, by \eqref{c.n} and $Q(k,\tz)\st B(x^{k(\tz)}_\tz,4\dz^{k(\tz)})$
[see Theorem \ref{tb.c}(iv)], we obtain
$$
\lf\|A^k_\tz\r\|^2_{\ltw}\ls\mu(Q(k,\tz))
\ls V\lf(x^{k(\tz)}_\tz,8\dz^{k(\tz)}\r).
$$

Let $\wz{A}^k_\tz:=A^k_\tz/\lz(k,\tz)$, where
$\lz(k,\tz):=[V(x^{k(\tz)}_\tz,8\dz^{k(\tz)})]^{1/p-1/2}
\|A^k_\tz\|_{\ltw}\in(0,\fz)$.
Now we claim that $\wz{A}^k_\tz$ is a $(p,2,\eta)$-molecule
centered at $B(x^{k(\tz)}_\tz,8\dz^{k(\tz)})$
multiplied by a positive constant, where
$\eta:=\{\eta_\ell\}_{\ell=1}^\fz$ and
$\eta_\ell:=2^{-\frac{\ell}{2}(M_0-1)}2^{\frac{n\ell}2}$
for any $\ell\in\nn$ and a fixed large enough constant $M_0$
satisfying $M_0>1+n+G_0$ with $n$ and $G_0$, respectively,
as in \eqref{a.b} and Remark \ref{rb.l}(ii),
$\sum_{\ell=1}^\fz\ell\eta_{\ell}<\fz$. Indeed, obviously, we have
\begin{equation}\label{3.39x2}
\lf\|\wz{A}^k_\tz\r\|_{\ltw}
=\lf[V(x^{k(\tz)}_\tz,8\dz^{k(\tz)})\r]^{1/2-1/p}.
\end{equation}
For any $\ell\in\nn$, by the Minkowski inequality and
the H\"older inequality, we see that
\begin{eqnarray*}
{\rm J}:&&=\lf\|\wz{A}^k_\tz\chi_{B(x^{k(\tz)}_\tz,2^\ell8\dz^{k(\tz)})\bh
B(x^{k(\tz)}_\tz,2^{\ell-1}8\dz^{k(\tz)})}\r\|_{\ltw}\\
&&\le\frac1{\lz(k,\tz)}\sum_{(j,\,\az,\,\bz)\in D(k,\,\tz)}
|a(j,\az,\bz)|\lf\|\psi^j_{\az,\,\bz}
\chi_{B(x^{k(\tz)}_\tz,2^\ell8\dz^{k(\tz)})\bh B(x^{k(\tz)}_\tz,
2^{\ell-1}8\dz^{k(\tz)})}\r\|_{\ltw}\\
&&\ls\frac1{\lz(k,\tz)}\lf[\sum_{(j,\,\az,\,\bz)\in D(k,\,\tz)}
|a(j,\az,\bz)|^2\r]^{1/2}\\
&&\hs\times\lf[\sum_{(j,\,\az,\,\bz)\in D(k,\,\tz)}
\lf\|\psi^j_{\az,\,\bz}
\chi_{B(x^{k(\tz)}_\tz,2^\ell8\dz^{k(\tz)})
\bh B(x^{k(\tz)}_\tz,2^{\ell-1}8\dz^{k(\tz)})}\r\|^2_{\ltw}\r]^{1/2}.
\end{eqnarray*}

Moreover, for any $(j,\az,\bz)\in D(k,\tz)$, by Theorem \ref{tb.c}(iv)
and \eqref{2.10x}, we have
$$
x^{j+1}_{\bz}\in Q^{j+1}_\bz\st Q^j_\az\st Q(k,\tz)
\st B\lf(x^{k(\tz)}_\tz,4\dz^{k(\tz)}\r)
$$
and hence $d(x^{j+1}_{\bz},x^{k(\tz)}_\tz)<4\dz^{k(\tz)}$.
From this, we deduce that, for any $(j,\az,\bz)\in D(k,\tz)$ and
$
x\in B(x^{k(\tz)}_\tz,2^\ell8\dz^{k(\tz)})\bh
B(x^{k(\tz)}_\tz,2^{\ell-1}8\dz^{k(\tz)}),
$
$$
d\lf(x^{j+1}_{\bz},x\r)\ge d\lf(x,x^{k(\tz)}_\tz\r)
-d\lf(x^{k(\tz)}_\tz,x^{j+1}_{\bz}\r)>
2^{\ell+2}\dz^{k(\tz)}-4\dz^{k(\tz)}\ge2^{\ell+1}\dz^{k(\tz)},
$$
which, together with \eqref{b.a}, \eqref{a.b} and $k(\tz)\ge j$,
further implies that
\begin{eqnarray*}
&&\lf\|\psi^j_{\az,\,\bz}\chi_{B(x^{k(\tz)}_\tz,2^\ell8\dz^{k(\tz)})\bh
B(x^{k(\tz)}_\tz,2^{\ell-1}8\dz^{k(\tz)})}\r\|^2_{\ltw}\\
&&\hs\ls\int_{B(x^{k(\tz)}_\tz,2^\ell8\dz^{k(\tz)})
\bh B(x^{k(\tz)}_\tz,2^{\ell-1}8\dz^{k(\tz)})}
\frac{1}{V(x_{\bz}^{j+1},\dz^j)}
e^{-2\nu\dz^{-j}d(x_{\bz}^{j+1},\,x)}\,d\mu(x)\\
&&\hs\ls e^{-2^{\ell+2}\nu\dz^{k(\tz)-j}}
\frac{V(x_{\tz}^{k},2^\ell8\dz^{k(\tz)})}{V(x_{\bz}^{j+1},\dz^j)}
\ls e^{-2^{\ell+2}\nu\dz^{k(\tz)-j}}2^{n\ell}
\frac{V(x_{\tz}^{k},8\dz^{k(\tz)})}{V(x_{\bz}^{j+1},\dz^j)}\\
&&\hs\ls e^{-2^{\ell}\nu\dz^{k(\tz)-j}}2^{n\ell}
\frac{V(x_{\bz}^{j+1},12\dz^{k(\tz)})}{V(x_{\bz}^{j+1},\dz^j)}
\ls e^{-2^{\ell}\nu\dz^{k(\tz)-j}}2^{n\ell}\dz^{[k(\tz)-j]n}
\frac{V(x_{\bz}^{j+1},12\dz^j)}{V(x_{\bz}^{j+1},\dz^j)}\\
&&\hs\ls e^{-2^{\ell}\nu\dz^{k(\tz)-j}}2^{n\ell}\dz^{[k(\tz)-j]n}.
\end{eqnarray*}
By this, $D(k,\tz)\st\{(j,\,\az,\,\bz)\in\cs:\ j\ge k(\tz),\
d(x^j_\az,x^{k(\tz)}_\tz)<4\dz^{k(\tz)}\}$,
(i) and (iii) of Remark \ref{rb.l} and $M_0>G_0+n+1$,
we conclude that
\begin{eqnarray*}
&&\sum_{(j,\,\az,\,\bz)\in D(k,\,\tz)}
\lf\|\psi^j_{\az,\,\bz}
\chi_{B(x^{k(\tz)}_\tz,2^\ell\dz^{k(\tz)})
\bh B(x^{k(\tz)}_\tz,2^{\ell-1}\dz^{k(\tz)})}\r\|^2_{\ltw}\\
&&\quad\ls\sum_{j=k(\tz)}^\fz\sum_{\{\az\in\sca_j:
\ d(x^j_\az,\,x^{k(\tz)}_\tz)<4\dz^{k(\tz)}\}}
e^{-2^{\ell}\nu\dz^{k(\tz)-j}}2^{n\ell}\dz^{[k(\tz)-j]n}\\
&&\quad\ls\sum_{j=k(\tz)}^\fz
2^{-M_0\ell}\dz^{-M_0[k(\tz)-j]}\dz^{G_0[k(\tz)-j]}2^{n\ell}\dz^{[k(\tz)-j]n}\\
&&\quad\ls2^{-M_0\ell}2^{n\ell}\sum_{j=k(\tz)}^\fz\dz^{(G_0+n-M_0)[k(\tz)-j]}
\ls2^{-\ell}\eta^2_{\ell},
\end{eqnarray*}
which further implies that
\begin{equation}\label{3.39x3}
{\rm J}\ls\eta_\ell2^{-\ell/2}\frac{\|A^k_\tz\|_{\ltw}}{\lz(k,\tz)}
\sim\eta_\ell2^{-\ell/2}\lf[V\lf(x^{k(\tz)}_\tz,8\dz^{k(\tz)}\r)\r]^{1/2-1/p}.
\end{equation}

By \eqref{b.c} and the finiteness of $\cs$, we obtain
$$
\int_\cx\wz{A}^k_\tz(x)\,d\mu(x)=0.
$$
From this, \eqref{3.39x2} and \eqref{3.39x3},
together with $\sum_{\ell=1}^\fz\ell\eta_{\ell}<\fz$,
we deduce that the above claim holds true.

By the above claim, \eqref{c.i},
\begin{equation}\label{3.42x}
\sum_{(j,\,\az,\,\bz)\in\cs}
a(j,\az,\bz)\psi^j_{\az,\,\bz}
=\sum_{k\in\zz}\sum_{\tz\in\scb_k}\lz(k,\tz)\wz{A}^k_\tz
\end{equation}
with only finitely many $\lz(k,\tz)\wz{A}^k_\tz\neq0$,
and Theorem \ref{tc.y}, we conclude that
$$
\sum_{(j,\,\az,\,\bz)\in\cs}
a(j,\az,\bz)\psi^j_{\az,\,\bz}\in\hona.
$$

Moreover, by \eqref{c.n}, $Q(k,\tz)=Q^{k(\tz)}_\tz$, together with
Theorem \ref{tb.c}(iv), \eqref{a.b}, disjoint property of
$\{Q(k,\tz)\}_{\tz\in\scb_k}$, \eqref{c.h} and \eqref{c.e}, we conclude that
\begin{eqnarray*}
\sum_{k\in\zz}\sum_{\tz\in\scb_k}\lz(k,\tz)
&&=\sum_{k\in\zz}\sum_{\tz\in\scb_k}
\lf[V\lf(x^{k(\tz)}_\tz,8\dz^{k(\tz)}\r)\r]^{1/2}
\lf\|A^k_\tz\r\|_{\ltw}\\
&&\ls\sum_{k\in\zz}\sum_{\tz\in\scb_k}
\lf[V\lf(x^{k(\tz)}_\tz,8\dz^{k(\tz)}\r)\r]^{1/2}2^k\sqrt{\mu(Q(k,\tz))}\\
&&\ls\sum_{k\in\zz}2^k\sum_{\tz\in\scb_k}\mu(Q(k,\tz))
\ls\sum_{k\in\zz}2^k\mu\lf(\boz_k^*\r)\\
&&\ls\sum_{k\in\zz}2^{k}\mu\lf(\boz_k\r)
\sim\int_\cx\vz_{\cs}(x)\,d\mu(x)<\fz.
\end{eqnarray*}

Thus, from this, \eqref{3.42x} and Theorem \ref{tc.y}, it follows that
$$
\lf\|\sum_{(j,\,\az,\,\bz)\in\cs}
a(j,\az,\bz)\psi^j_{\az,\,\bz}\r\|_{\hona}
\ls\sum_{k\in\zz}\sum_{\tz\in\scb_k}\lz(k,\tz)
\ls\lf\|\vz_{\cs}\r\|_{\lon},
$$
which completes the proof of Lemma \ref{lc.i}.
\end{proof}

Now we are ready to prove Theorem \ref{tc.d}.

\begin{proof}[Proof of Theorem \ref{tc.d}]
Let $f\in\lon$ and
\begin{equation}\label{3.43x}
f=\sum_{(k,\,\az,\,\bz)\in\sci}
\langle f,\psi^k_{\az,\,\bz}\rangle\psi^k_{\az,\,\bz}
\quad {\rm in}\quad \lon.
\end{equation}
From $\hona\st\lon$ and Theorem \ref{tc.b},
we deduce that (i) implies (ii).

By Lemma \ref{lc.x}, we know that
(ii) implies (iii).

Now we prove ``$\rm (iii)\Longrightarrow(i)$".
Indeed, let $\{\sci_N\}_{N\in\nn}$ be any sequence
of finite subsets of $\sci$ as in \eqref{3.24z} and
$$
S_N(f):=\sum_{(k,\,\az,\,\bz)\in\sci_N}
\langle f,\psi^k_{\az,\,\bz}\rangle\psi^k_{\az,\,\bz},
\quad N\in\nn.
$$
For any $N,\,M\in\nn$ with $M<N$, by Theorem \ref{tb.m}, we have
\begin{eqnarray}\label{c.q}
&&\lf\{\sum_{(k,\,\az,\,\bz)\in\sci_N\bh\sci_M}
\lf|\lf\langle f,\psi^k_{\az,\,\bz}\r\rangle\r|^2
\lf[R^k_{\az,\,\bz}\r]^2\r\}^{1/2}\\
&&\noz\hs\ls\lf\{\sum_{(k,\,\az,\,\bz)\in\sci_N\bh\sci_M}
\lf|\lf\langle f,\psi^k_{\az,\,\bz}\r\rangle\r|^2
\lf|\psi^k_{\az,\,\bz}\r|^2\r\}^{1/2}\\
&&\noz\hs\ls\lf\{\sum_{(k,\,\az,\,\bz)\in\sci}
\lf|\lf\langle f,\psi^k_{\az,\,\bz}\r\rangle\r|^2
\lf|\psi^k_{\az,\,\bz}\r|^2\r\}^{1/2}\in\lon,
\end{eqnarray}
which, together with Lemma \ref{lc.i}, further implies that
$$
\lf\|S_N(f)-S_M(f)\r\|_{\hona}
\ls\lf\|\lf\{\sum_{(k,\,\az,\,\bz)\in\sci_N\bh\sci_M}
\lf|\lf\langle f,\psi^k_{\az,\,\bz}\r\rangle\r|^2
\lf[R^k_{\az,\,\bz}\r]^2\r\}^{1/2}\r\|_{\lon}\to0,
$$
as $N,\,M\to\fz$.

Thus, $\{S_N(f)\}_{N\in\nn}$ is a Cauchy sequence in $\hona$ and hence,
by Remark \ref{rc.i}, there exists $g\in\hona$ such that
$$
g=\lim_{N\to\fz} S_N(f)\quad {\rm in}\quad \hona.
$$
From this, the fact that $\hona\st\lon$
and \eqref{3.43x}, we deduce that
$$
g=\lim_{N\to\fz} S_N(f)=f\quad {\rm in}\quad \lon,
$$
which, combined with $g\in\hona$, further implies that $f\in\hona$.
This finishes the proof of ``$\rm (iii)\Longrightarrow(i)$"
and hence (i), (ii) and (iii) are mutually equivalent.

``$\rm (iii)\Longrightarrow(v)$" follows from Theorem \ref{tb.m}.

``$\rm (v)\Longrightarrow(i)$" is an implicit consequence of the proof
of ``$\rm (iii)\Longrightarrow(i)$". Thus, (i), (ii), (iii) and (v)
are mutually equivalent.

``$\rm (iv)\Longrightarrow(v)$" is obvious by \eqref{c.7}.

To show ``$\rm (v)\Longrightarrow(iv)$", we first claim that, for all
$s\in(0,\fz)$ and $(k,\az,\bz)\in\sci$,
\begin{equation}\label{3.44x}
\chi_{Q_\az^k}\ls\lf[M\lf(\chi_{W^k_{\az,\bz}}\r)^s\r]^{1/s}.
\end{equation}
Indeed, by Remark \ref{rb.z}, Theorem \ref{tb.c}(iv) and \eqref{a.b},
we know that, for all $x\in Q_\az^k\st B(x^k_\az,4\dz^k)$,
\begin{eqnarray*}
1&&\sim\lf[\frac{\mu(W^k_{\az,\,\bz})}{\mu(Q^k_\az)}\r]^{1/s}
\sim\lf\{\frac1{\mu(Q^k_\az)}\int_{Q^k_\az}
\lf[\chi_{W^k_{\az,\,\bz}}(y)\r]^s\,d\mu(y)\r\}^{1/s}\\
&&\ls\lf\{\frac1{\mu(B(x^k_\az,(1/3)\dz^k))}\int_{B(x^k_\az,\,4\dz^k)}
\lf[\chi_{W^k_{\az,\,\bz}}(y)\r]^s\,d\mu(y)\r\}^{1/s}
\ls\lf[M\lf(\chi_{W^k_{\az,\bz}}\r)^s(x)\r]^{1/s},
\end{eqnarray*}
which shows the above claim.

Moreover, by \eqref{3.44x}, with $s:=2/r$ and $r\in(0,1)$, and
the Fefferman-Stein vector-valued maximal function inequality
(see, for example, \cite[Theorem 1.2]{gly}), we obtain
\begin{eqnarray*}
&&\lf\|\lf\{\sum_{(k,\,\az,\,\bz)\in\sci}
\lf|\lf\langle f,\psi^k_{\az,\,\bz}\r\rangle\r|^2\lf[\mu\lf(Q_\az^k\r)\r]^{-1}
\chi_{Q_\az^k}\r\}^{1/2}\r\|_{\lon}\\
&&\hs\ls\int_\cx\lf\{\sum_{(k,\,\az,\,\bz)\in\sci}
\frac{|\langle f,\psi^k_{\az,\,\bz}\rangle|^2}{\mu(Q_\az^k)}
\lf[M\lf(\chi_{W^k_{\az,\,\bz}}\r)^{r/2}(x)\r]^{2/r}\r\}^{1/2}\,d\mu(x)\\
&&\hs\sim\lf\|\lf\{\sum_{(k,\,\az,\,\bz)\in\sci}
\lf[M\lf(\frac{|\langle f,\psi^k_{\az,\,\bz}\rangle|}
{[\mu(Q_\az^k)]^{1/2}}
\chi_{W^k_{\az,\,\bz}}\r)^{r}\r]^{2/r}\r\}^{r/2}\r\|_{L^{1/r}(\cx)}^{1/r}\\
&&\hs\ls\lf\|\lf\{\sum_{(k,\,\az,\,\bz)\in\sci}
\lf|\lf\langle f,\psi^k_{\az,\,\bz}\r\rangle\r|^2
\lf[R^k_{\az,\,\bz}\r]^2\r\}^{1/2}\r\|_{L^1(\cx)},
\end{eqnarray*}
which shows ``$\rm (v)\Longrightarrow(iv)$".
Thus, (i) through (v) are mutually equivalent.

Finally, we show that $\|\cdot\|_{\rm (iii)}$, $\|\cdot\|_{\rm (iv)}$
and $\|\cdot\|_{\rm (v)}$ give norms on $\hona$, which are
equivalent to $\|\cdot\|_{\hona}$, respectively.
Indeed, by \eqref{c.q}, Corollary \ref{cc.c},
Theorem \ref{tc.b} and Lemma \ref{lc.i}
we conclude that, for all $f\in\hona$,
\begin{eqnarray*}
\|f\|_{\rm (v)}&&\ls\|f\|_{\rm (iii)}\sim
\lf\|\lf\{\sum_{(k,\,\az,\,\bz)\in\sci}
\lf|\lf\langle f,\psi^k_{\az,\,\bz}\r\rangle\r|^2
\lf|\psi^k_{\az,\,\bz}(x)\r|^2\r\}^{1/2}\r\|_{\lon}
\\
&&\ls\|f\|_{\hona}
\sim\lim_{N\to\fz}\lf\|\sum_{(k,\,\az,\,\bz)\in\sci_N}
\lf\langle f,\psi^k_{\az,\,\bz}\r\rangle
\psi^k_{\az,\,\bz}(x)\r\|_{\hona}\\
&&\ls\lim_{N\to\fz}\|\vz_{\sci_N}\|_{\lon}\ls\|f\|_{\rm (v)}.
\end{eqnarray*}
Thus, $\|\cdot\|_{\rm (v)}\sim\|\cdot\|_{\hona}\sim\|\cdot\|_{\rm (iii)}$.
Moreover, by the proof of ``$\rm (iv)\Longleftrightarrow(v)$",
we see that
$\|\cdot\|_{\rm (iv)}\sim\|\cdot\|_{\rm (v)}$,
which implies the desired conclusion and hence
completes the proof of Theorem \ref{tc.d}.
\end{proof}

\begin{remark}\label{rc.x}
By arguments essentially the same
as those used in the case of $d$,
we conclude that all the results obtained in this article
\emph{remain} valid with the metric $d$ replaced by a
quasi-metric $\rho$, since most of the tools we need are from
\cite{ah13,ah15},
which were established in the context of
spaces of homogeneous type.
Some minor modifications are needed when dealing with
the inclusion relations between two balls, where
the quasi-triangle constant is involved,
which only alter the corresponding results
by additive positive constants via \eqref{a.b}.
\end{remark}

\bigskip

Xing Fu and Dachun Yang (Corresponding author)

\medskip

School of Mathematical Sciences, Beijing Normal University,
Laboratory of Mathematics and Complex Systems, Ministry of
Education, Beijing 100875, People's Republic of China

\smallskip

{\it E-mails}: \texttt{xingfu@mail.bnu.edu.cn} (X. Fu)

\hspace{1.55cm}\texttt{dcyang@bnu.edu.cn} (D. Yang)

\end{document}